\newcommand{\Wip}{\mathrm{A}_+^1}
\newcommand{\vanish}[1]{\relax}
\newcommand{\N}{\mathbb{N}}
\newcommand{\Z}{\mathbb{Z}}
\newcommand{\R}{\mathbb{R}}
\newcommand{\C}{\mathbb{C}}
\newcommand{\D}{\mathbb{D}}
\newcommand{\Sum}[2][\relax]{%
 \ifx#1\relax \sideset{}{_{#2}}\sum
 \else \sideset{}{^{#1}_{#2}}\sum
 \fi}
\newcommand{\abs}[1]{\left| #1 \right|}
\renewcommand{\abs}[1]{\left\vert#1\right\vert}
\DeclareMathOperator{\Sect}{Sect}
\DeclareMathOperator{\dom}{dom}
\DeclareMathOperator{\ran}{ran}
\newcommand{\cls}[1]{\overline{#1}}
\DeclareMathOperator{\Lin}{\mathcal{L}}
\newcommand{\norm}[2][\relax]{%
   \ifx#1\relax \ensuremath{\left\Vert#2\right\Vert}
   \else \ensuremath{\left\Vert#2\right\Vert_{#1}}
   \fi}
\newcommand{\sprod}[2]{\ensuremath{%
  \setbox0=\hbox{\ensuremath{#2}}
  \dimen@\ht0
  \advance\dimen@ by \dp0
  \left(\left.#1\rule[-\dp0]{0pt}{\dimen@}\,\right|#2\hspace{1pt}\right)}}
\newcounter{aufzi}
\newcounter{aufzii}
\newcounter{aufziii}
 \newtheorem{thm}{Theorem}[section]
 \newtheorem{cor}[thm]{Corollary}
 \newtheorem{lemma}[thm]{Lemma}
 \newtheorem{prop}[thm]{Proposition}
 \theoremstyle{definition}
 \newtheorem{defn}[thm]{Definition}
 \theoremstyle{remark}
 \newtheorem{rem}[thm]{Remark}
\newtheorem{example}[thm]{Example}
\newtheorem{remark}[thm]{Remark}
\numberwithin{equation}{section}
\numberwithin{equation}{section}
\numberwithin{theorem}{section}
\begin{document}

\title[On discrete subordination]{On discrete subordination of power bounded and Ritt operators}
\author{Alexander Gomilko}
\address{Faculty of Mathematics and Computer Science\\
Nicolas Copernicus University\\
ul. Chopina 12/18\\
87-100 Toru\'n, Poland \\
and Institute of Telecommunications and Global \\
Information Space, National Academy of Sciences of Ukraine\\
Kiev, Ukraine
}

\email{gomilko@mat.umk.pl}

\author{Yuri Tomilov}
\address{Institute of Mathematics\\
Polish Academy of Sciences\\
\'Sniadeckich 8\\
00-956 Warsaw, Poland}
 \email{ytomilov@impan.pl}

\thanks{This work was  partially supported by the NCN grant
 DEC-2014/13/B/ST1/03153 and by the EU grant  ``AOS'', FP7-PEOPLE-2012-IRSES, No 318910.}

\subjclass{Primary 47A60, 47D03; Secondary 46N30, 26A48}

\keywords{Ritt operator, power bounded operator, functional
calculus, subordination, Hausdorff functions, Bernstein functions,
holomorphic $C_0$-semigroup}

\date{\today}

\begin{abstract}
By means of a new technique, we develop further a discrete subordination approach to the
functional calculus of  power bounded and Ritt operators initiated
by N. Dungey in \cite{Dungey}. This allows us to show, in
particular, that  (infinite) convex combinations of powers of Ritt
operators are Ritt. Moreover, we provide a unified framework for
several main results on discrete subordination  from \cite{Dungey}
and answer a question left open in \cite{Dungey}. The paper can be
considered as a complement to \cite{GT} for the discrete setting.

\end{abstract}

\maketitle
\section{Introduction}

The aim of this paper is to initiate a study of permanence and
``improving'' properties of discrete subordination for bounded
operators parallel in a sense to an investigation of subordination
for $C_0$-semigroups realized in our recent paper \cite{GT}. A
discrete subordination in the abstract setting has not received a
proper attention in the literature, and we are not aware of any
relevant works apart from  \cite{Dungey} and \cite{Bendikov}. At
the same time, our
 paper can be regarded as a contribution to understanding of permanence and improving properties for
 functional calculi of bounded operators.
In fact, there are very few results saying that basic features of
operator like resolvent estimates or asymptotics of powers, are
preserved under a functional calculus or, at least, under a
substantial class of admissible functions. It seems, the only
relevant and nontrivial result so far  was the one by Hirsch
\cite{Hirsch} saying that complete Bernstein functions preserve
the class of sectorial (in general, unbounded) operators.

To put our results into a proper context, we first recall several
basic facts stemming from the subordination theory of
$C_0$-semigroups on Banach spaces. There are two basic notions
behind the subordination theory: the notion of Bernstein function
and that of subordinator.

Recall that a family of positive subprobability Borel measures
$(\mu_t)_{t\ge 0}$ on $[0,\infty)$ is said to be said to be a
\emph{subordinator} if for all $s,t\ge0$ one has
$\mu_{t+s}=\mu_t*\mu_s,$ and $\lim_{t\to0+}\mu_t=\delta_0$ in the
$w^*$-topology of the space of bounded Borel measures on $[0,\infty).$
Given a subordinator $(\mu_t)_{t\ge 0}$ one may define a Bernstein
function $\psi$ on $(0,\infty)$ by the formula
\begin{equation}\label{bernstein}
e^{-t\psi(\lambda)}=\int_{0}^{\infty}e^{-s\lambda} \, \mu_t(ds),
\end{equation}
for all $t \ge 0$ and $\lambda >0.$ See \cite[Section
5]{SchilSonVon2010} for more on that. (Alternatively, a positive
and smooth function $\psi$ on $(0, \infty)$ is called a {\em
Bernstein function} if $(-1)^n\frac{d^n \psi(t)}{
d {t}^n}\leq 0 $ for all $n \in \mathbb N$ and  $t> 0.$)

If now  $(e^{-tA})_{t \ge 0}$  is a bounded $C_0$-semigroup on a
(complex) Banach space $X$ with generator $-A,$  and  $(\mu_t)_{t \ge 0}$ is
a subordinator, then, following  intuition provided by
\eqref{bernstein}, one can define a new \emph{bounded}
$C_0$-semigroup on $X$ as
\begin{equation}\label{bersem}
T(t):=\int_{0}^{\infty}e^{-sA}\,\mu_t(ds), \qquad t \ge 0,
\end{equation}
where the (Bochner) integral converges in the strong topology of
$X.$ Once again, in view of \eqref{bernstein}, it is natural to
consider the generator of $(T(t))_{t \ge 0}$ as a (minus)
Bernstein function $\psi$ of $A.$ This appears to be a right
choice and can serve as the definition of $\psi(A)$ indeed. There
are several other alternative definitions of $\psi(A),$ but all of
them lead to the same operator. The operator Bernstein functions
have a number of natural properties resembling that of scalar
functions. One of these properties is expressed by \eqref{bersem}
and provides a natural way  to construct a bounded semigroup
 $(e^{-t\psi(A)})_{t \ge 0}$ by means of a given bounded semigroup $(e^{-tA})_{t \ge 0}$  and
 a subordinator  $(\mu_t)_{t \ge 0}$.
In this situation, $(e^{-t\psi(A)})_{t \ge 0}$ is called
subordinated to $(e^{-t A})_{t \ge 0}$ via a subordinator
$(\mu_t)_{t \ge 0}.$ The construction of subordination described
above goes back to Bochner and Phillips and became a crucial tool
in probability theory and functional analysis (and also in
engineering), see e.g. \cite{SchilSonVon2010} and comments to
Section $13$ there. A classical example of subordination is
provided by the semigroup of fractional powers
$(e^{-tA^\alpha})_{t \ge 0}, \alpha \in (0,1)$ (corresponding to
the Bernstein function $\psi(\lambda)=\lambda^\alpha$). It was studied thoroughly in
the $1960$s by Balakrishnan, Kato and  Yosida. A comprehensive
discussion of subordination for $C_0$-semigroups including many
illustrative examples can be found in \cite[Section
13]{SchilSonVon2010}.

Apart from a number of issues of a purely probabilistic origin,
there are two very natural, operator-theoretical questions in the
study of subordination. Namely, whether subordination preserves
the classes of holomorphic  sectorially bounded holomorphic
$C_0$-semigroups and when it possesses improving properties in the
sense that general bounded $C_0$-semigroups are transformed into
holomorphic semigroups. Motivated by a fundamental paper by
Carasso and Kato \cite{Carasso} (and also its subsequent
developments in \cite{Fujita}, \cite{Fujita1} and \cite{Mirotin})
and answering a problem posed in \cite{Robinson} and \cite{Berg},
we have recently obtained the following result where positive
answers to both questions were provided, see \cite[Theorems 6.8
and 7.9]{GT}.

\begin{thm}\label{GTmain}
(a) \, Let $-A$ be the generator of a bounded $C_0$-semigroup on
$X$ such that $A$ is sectorial of angle $\theta\in [0,\pi/2).$
Then for every Bernstein function $\psi$ the operator $\psi(A)$ is
sectorial of the same angle.

\noindent b)\, Let $\psi$ be  a complete Bernstein function and
let $\gamma \in (0,\pi/2)$ be fixed. The following assertions
are equivalent.
\begin{itemize}
\item [(i)]
One has
$$ \psi(\mathbb C_+) \subset
 \overline{\Sigma}_\gamma.$$
\item [(ii)]  For each Banach space $X$ and each generator $-A$ of a bounded
$C_0$-semigroup on $X,$ the operator $\psi(A)$ is sectorial of
angle $\gamma$.
\end{itemize}
\end{thm}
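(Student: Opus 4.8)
The plan for (a) rests on an elementary feature of Bernstein functions: every Bernstein function $\psi$ extends holomorphically to $\mathbb{C}_+$ and satisfies $\abs{\arg\psi(\lambda)}\le\abs{\arg\lambda}$ there, equivalently $\psi(\Sigma_\alpha)\subseteq\overline{\Sigma_\alpha}$ for every $\alpha\in(0,\pi/2)$; this is classical and can be read off the representation $\psi(\lambda)=a+b\lambda+\int_{(0,\infty)}(1-e^{-s\lambda})\,\nu(ds)$, since on $\mathbb{C}_+$ each of $b\lambda$ and $1-e^{-s\lambda}$ lies in the convex sector $\{\abs{\arg w}\le\abs{\arg\lambda}\}$. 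Writing $\psi=a+b(\cdot)+\psi_0$ with $\psi_0(\lambda):=\int_{(0,\infty)}(1-e^{-s\lambda})\,\nu(ds)$, it follows that for $\abs{\arg z}<\pi/2-\theta$ and $\lambda$ in a sector $\Sigma_{\theta'}$ with $\theta<\theta'<\pi/2-\abs{\arg z}$ one has $\re(za),\,\re(zb\lambda),\,\re\bigl(z\psi_0(\lambda)\bigr)\ge0$, hence $\abs{e^{-z\psi(\lambda)}}\le1$ on $\Sigma_{\theta'}\supseteq\overline{\Sigma_\theta}\supseteq\sigma(A)$.

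\textbf{Part (a).} Since $-\psi(A)$ already generates the bounded $C_0$-semigroup $(T(t))_{t\ge0}$ of \eqref{bersem}, and since an operator whose negative generates a bounded $C_0$-semigroup is sectorial of angle $\theta$ precisely when that semigroup extends to a $C_0$-semigroup bounded on every proper subsector of $\Sigma_{\pi/2-\theta}$, the plan is to produce such an extension. One would like to take $T(z):=e^{-z\psi}(A)$, but the scalar bound $\abs{e^{-z\psi(\lambda)}}\le1$ does \emph{not} by itself give boundedness of the operator $e^{-z\psi}(A)$, because a general generator of a bounded $C_0$-semigroup need not admit a bounded $H^\infty$-calculus. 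The way around this, and the technical core, is to represent the symbol as an absolutely convergent superposition
\[
e^{-z\psi(\lambda)}=\int_{\Gamma}e^{-\zeta\lambda}\,\mu_z(d\zeta)
\]
over a contour $\Gamma\subset\Sigma_{\pi/2-\theta}$ lying in the sector of holomorphy of $(e^{-\zeta A})$, where $\mu_z$ is a finite complex measure on $\Gamma$ that for real $z>0$ reduces to the usual subordinator of \eqref{bernstein} on $\Gamma=[0,\infty)$. Granting this together with the crucial bound $\sup_{\abs{\arg z}\le\delta'}\norm[\mathrm{TV}]{\mu_z}<\infty$ for each $\delta'<\pi/2-\theta$, one sets $T(z):=\int_\Gamma e^{-\zeta A}\,\mu_z(d\zeta)$, so that $\norm{T(z)}\le\bigl(\sup_{\zeta\in\Gamma}\norm{e^{-\zeta A}}\bigr)\norm[\mathrm{TV}]{\mu_z}$ is bounded on proper subsectors; holomorphy in $z$, the semigroup law $e^{-z_1\psi}e^{-z_2\psi}=e^{-(z_1+z_2)\psi}$, strong continuity at $0$, and agreement with \eqref{bersem} on $(0,\infty)$ then follow routinely (the last two by uniqueness of Laplace transforms), and $\psi(A)$ is sectorial of angle $\theta$; the constant $a$ is harmless since $\overline{\Sigma_\theta}+a\subseteq\overline{\Sigma_\theta}$. \emph{The main obstacle is exactly the construction of $\mu_z$ with this uniform total-variation bound}: crude representations --- say via the convolution exponential attached to $\psi_0$ --- give only bounds that degenerate as $\abs z\to\infty$ or as $\arg z\to\pm(\pi/2-\theta)$, so the analytic continuation $z\mapsto\mu_z$ has to be carried out in a way that genuinely uses the Bernstein property. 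This is where the new technique of the paper is required.

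\textbf{Part (b).} For (ii)$\Rightarrow$(i) test on a one-dimensional example: if $\psi(\lambda_*)\notin\overline{\Sigma_\gamma}$ for some $\lambda_*\in\mathbb{C}_+$, take $X=\mathbb{C}$ and $A$ the operator of multiplication by $\lambda_*$, so $-A$ generates the contraction semigroup $(e^{-t\lambda_*})_{t\ge0}$; by \eqref{bernstein} the subordinated semigroup is $(e^{-t\psi(\lambda_*)})_{t\ge0}$, hence $\psi(A)=\psi(\lambda_*)$ and $\sigma(\psi(A))=\{\psi(\lambda_*)\}\not\subseteq\overline{\Sigma_\gamma}$, so $\psi(A)$ is not sectorial of angle $\gamma$. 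For (i)$\Rightarrow$(ii) the plan is to use the complete-Bernstein representation $\psi(\lambda)=a+b\lambda+\int_{(0,\infty)}\frac{\lambda}{\lambda+s}\,\sigma(ds)$ with $\sigma\ge0$ and $\int_{(0,\infty)}(1+s)^{-1}\sigma(ds)<\infty$; here $\psi(\mathbb{C}_+)\subseteq\overline{\Sigma_\gamma}$ with $\gamma<\pi/2$ forces $b=0$ (else $\arg\psi(it)\to\pi/2$ as $t\to\infty$), and then, following Hirsch, $\psi(A)=a+\int_{(0,\infty)}A(A+s)^{-1}\,\sigma(ds)$, the integral converging strongly on $\dom(A)$ because $\norm{A(A+s)^{-1}x}\le(1+M)\min\{\norm{x},\,s^{-1}\norm{Ax}\}$, with $M:=\sup_{t\ge0}\norm{e^{-tA}}$. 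It then remains to prove the uniform resolvent bound $\norm{\mu(\mu-\psi(A))^{-1}}\le C_\gamma$ for all $\mu\notin\overline{\Sigma_\gamma}$ (the inclusion $\sigma(\psi(A))\subseteq\overline{\Sigma_\gamma}$ being immediate from $\sigma(\psi(A))\subseteq\overline{\psi(\sigma(A))}$ and $\sigma(A)\subseteq\overline{\mathbb{C}_+}$). The case $\mu<0$ already exhibits the mechanism and the link with (a): $\lambda\mapsto\mu(\mu-\psi(\lambda))^{-1}=\abs\mu(\abs\mu+\psi(\lambda))^{-1}$ is a Stieltjes function of $\lambda$ --- a Stieltjes function composed with the complete Bernstein function $\psi$ --- hence of the form $c_0+\int_0^\infty(s+\lambda)^{-1}\,m_\mu(ds)$ with $c_0+\int_0^\infty s^{-1}m_\mu(ds)=\abs\mu(\abs\mu+a)^{-1}\le1$, so that the $\sigma$-representation of $A$ together with $\norm{(s+A)^{-1}}\le M/s$ gives $\norm{\mu(\mu-\psi(A))^{-1}}\le(1+M)\bigl(c_0+\int_0^\infty s^{-1}m_\mu(ds)\bigr)\le1+M$. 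For general $\mu$ with $\abs{\arg\mu}>\gamma$ the function $\mu(\mu-\psi(\lambda))^{-1}$ is no longer Stieltjes, and one needs a complex Stieltjes-type superposition $\int_{[0,\infty)}(s+\lambda)^{-1}\,m_\mu(ds)$ of it with $\norm[\mathrm{TV}]{m_\mu}$ bounded uniformly in $\mu$ --- obtained by the same analytic-continuation device as in (a), applied now to $1/(\mu-\psi)$. Together with the harmless constant $a$ this yields that $\psi(A)$ is sectorial of angle $\gamma$, and, as in part (a), \emph{the obstacle is the uniform control of $\norm[\mathrm{TV}]{m_\mu}$ for non-real spectral parameters $\mu$}.
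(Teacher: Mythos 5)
You should first be aware that the paper does not prove Theorem \ref{GTmain} at all: it is imported verbatim from \cite[Theorems 6.8 and 7.9]{GT}, so the only proof to compare with is that external one --- and measured against any complete proof, your sketch has a genuine gap. In both parts you reduce the theorem to the existence of an analytic family of complex measures with total variation bounded uniformly on subsectors: in (a) the family $\mu_z$, $z\in\Sigma_{\pi/2-\theta}$, supported on a contour $\Gamma\subset\Sigma_{\pi/2-\theta}$ and representing $e^{-z\psi(\lambda)}=\int_\Gamma e^{-\zeta\lambda}\,\mu_z(d\zeta)$; in (b) the family $m_\mu$ representing $\mu(\mu-\psi(\lambda))^{-1}$ for non-real $\mu\notin\overline{\Sigma}_\gamma$. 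You then state explicitly that you do not construct these objects. That construction is not a deferrable technicality; it is the entire analytic content of the theorem, and everything you do carry out --- the scalar inclusion $\psi(\Sigma_\alpha)\subset\overline{\Sigma}_\alpha$ for Bernstein functions, the equivalence of sectoriality of angle $\theta$ with sectorial boundedness of the holomorphically extended subordinated semigroup, the implication (ii)$\Rightarrow$(i) via a one-dimensional multiplication operator, and the Stieltjes estimate for real $\mu<0$ --- is correct but standard. For (a) it is not even clear that a representation of the proposed form exists for a general (not complete) Bernstein function: the subordinating measures $\mu_t$ in \eqref{bernstein} are in general singular, they cannot simply be ``rotated'' onto $\Gamma$, and nothing in your sketch excludes that any analytic continuation $z\mapsto\mu_z$ has total variation degenerating as $|z|\to\infty$ or as $\arg z\to\pm(\pi/2-\theta)$ --- exactly the degeneration you concede for crude constructions, and exactly the phenomenon (cf. \cite{Carasso}) that makes the statement nontrivial.

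It is also worth pointing out that the known proofs obtain the required uniformity by a different mechanism, which avoids TV-bounded complex subordinating families altogether. In \cite{GT}, and likewise in the bounded-operator analogue proved in the present paper (Theorem \ref{MT10}, deduced from Lemma \ref{In1} and Theorem \ref{mainT}), one estimates the resolvent $(z+\psi(A))^{-1}$ directly: a contour deformation gives a representation of $(z+\psi(\lambda))^{-1}$ whose kernel involves ${\rm Im}\,\psi(te^{i\pi/q})$, and the pointwise bound ${\rm Im}\,\psi(te^{i\beta})\le 2t\tan(\beta/2)\,\psi'(t)$ of Lemma \ref{phi} turns this into an absolutely convergent integral with constants uniform in the spectral parameter. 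If you want to complete your part (b) for non-real $\mu$, that is the device to emulate; for part (a) with a general Bernstein function the non-complete part of $\psi$ requires the additional functional-calculus work carried out in \cite{GT}, which your sketch leaves untouched.
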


To create a similar ``discrete'' framework, let now $\mu$ be a
probability measure on $\Z_+:=\mathbb N\cup\{0\},$ in other words,
$\mu(k) \ge 0, k \ge 0,$ and $\sum_{k=0}^{\infty}\mu(k)=1.$ If $T$
is a power bounded operator on $X,$ then setting $\widehat
\mu(T):=\sum_{k=0}^{\infty}\mu(k) T^k,$ and denoting $\mu^n:=\mu*
\dots * \mu$ the $n$th  convolution power of $\mu$,
 so that
$\mu^n \in \ell_1(\Z_+), n \in \mathbb N,$ we have
\begin{equation}\label{discr}
\widehat \mu (T)^n=\sum_{k=0}^{\infty} T^k \mu^n(k)=\int_{\Z_+}
T^k d\mu^n(k),
\end{equation}
where the last equality is purely formal. Thus, there is a clear
analogy with the continuous case, and it is natural to say that
$(\widehat \mu(T)^n)_{n \ge 0}$ (or $\widehat \mu (T)$) is
subordinated to $(T^n)_{n \ge 0}$ (or $T$). However, to simplify
our terminology, we will be considering  just the powers of
$\widehat \mu (T)$ defined by means of the power series $\widehat
\mu(z)=\sum_{k \ge 0}\mu(k) z^k$ called sometimes the generating
function of $\mu$. Note that similarly to the case of bounded
$C_0$-semigroups, if $T$ is power bounded, then the operator
$\widehat \mu(T)$ is power bounded as well. This is precisely the
framework of \cite{Dungey}, and one may then study finer
properties of $\widehat\mu(T)$ in terms of the same properties of
$T$ . The attempt to set up a discrete subordination similar to
the one existing in the setting of $C_0$-semigroups was also made
 in \cite{Bendikov}. However, the assumptions of \cite{Bendikov}
seem to be more restrictive than the ones in \cite{Dungey}.

The paper \cite{Dungey} is devoted mainly to the study of the
improving properties of  measures $\mu,$ or, equivalently, of
their generating functions $\widehat \mu$ in the spirit of
\cite{Carasso}. To discuss the relevant results from \cite{Dungey}
in some more detail,  we have to introduce several
operator-theoretical notions. A bounded linear operator $T$ on a
Banach space $X$ is said to be Ritt if there exists $C\ge 1$ such
that
$$\sigma(T)\subset \overline {\D} \qquad
\text{and}
 \qquad \|(\lambda - T)^{-1}\|\le \frac{C}{|\lambda -
1|},\quad |\lambda| >1,$$
where $\overline \D$ stands for closure of the open unit disc $\D.$
The last two conditions can be equivalently
rewritten in the following, formally stronger, form: There exists $\omega \in [0,\pi/2)$ such that
$$\sigma(T)\subset \D \cup\{1\} \qquad \text{and}
 \qquad \|(\lambda - T)^{-1}\|\le \frac{C_{\omega'}}{|\lambda - 1|}, \qquad \lambda \in \mathbb C
 \setminus \left(1-\overline{\Sigma}_{\omega'} \right), $$
for every $\omega'\in (\omega,\pi)$ and an appropriate
$C_{\omega'}\ge 1,$ where $\Sigma_\omega=\{\lambda \in \mathbb C:
|\arg \lambda|<\omega\}$ and
$\Sigma_0=(0,\infty).$ We say that $T$ is \emph{of angle $\omega$}
in this case. (In fact, one can put $\omega=\arccos (1/C)$ here,
\cite{Lyubich}.) Moreover, as we prove in Proposition
\ref{rittchar} below, $T$ is Ritt if and only if there exists $\sigma \ge 1$ such that for every
$\sigma'>\sigma $ and some $C_{\sigma'} \ge 1$ one has
$$
\|(\lambda - T)^{-1}\|\le \frac{C_{\sigma'}}{|\lambda - 1|},
\qquad \lambda \in \C \setminus S_{\sigma'},
$$
where $S_\sigma:=\{z\in \D:\,|1-z|/(1-|z|)< \sigma \}\cup\{1\},$
is a \emph{Stolz domain}. In this situation $T$ is said to be
\emph{of Stolz type} $\sigma.$ See Section \ref{secritt} for a
thorough discussion of these and related notions. There is a
substantial literature devoted to Ritt operators and their various
properties ranging from the role in functional calculi to
applications in ergodic and probability theories. A sample of it
could include \cite{Merdy1}, \cite{Bakaev}, \cite{Blunck1},
\cite{Blunck}, \cite{Cohen}, \cite{CoSa}--\cite{ElFaRa02},
 \cite{Haa2006}, \cite{KMOT}, \cite{Kalton},
\cite{Komatsu}--\cite{Lyubich}, \cite{NaZe}--\cite{Portal},
 \cite{Vitse}, and
\cite{Vitse1}. (Unfortunately, while the topic is vast, there is
no survey on Ritt operators yet.) We only note one more
characterization of Ritt operators saying that $T$ is Ritt if and
only if $T$ is \emph{power bounded}, i.e. $\sup_{n \ge 0}
\|T^n\|<\infty,$ and $\sup_{n \ge 0} n \|T^n-T^{n+1}\|
<\infty.$ In fact, Ritt operators can serve as a discrete analogue
of generators of (sectorially) bounded holomorphic
$C_0$-semigroups, while power bounded operators correspond to
generators of bounded $C_0$-semigroups. See e.g. \cite{Dungey},
\cite{Blunck}, \cite{Blunck1} or \cite{KMOT} and references
therein for comments on that issue.

In analogy with the case of $C_0$-semigroups considered in
\cite{GT}, one can say that a measure $\mu$ is \emph{improving} if
for any power bounded operator $T$ on $X$ the operator $\widehat
\mu (T)$ is Ritt. By means of a general sufficient condition
involving the boundary behavior of the generating function
$\widehat \mu$ in $\D,$
 Dungey proved
in \cite{Dungey} the improving property for several interesting
and important probabilities $\mu.$

In this paper, we will put Dungey's results from \cite{Dungey} in
a broader context and improve several of them. More generally, in
view of the discussion above, it is natural to to try to obtain a
discrete counterpart of Theorem \ref{GTmain} once the notion of a
discrete subordination is adapted. One of the aims of this paper
is to prove the results on permanence and improving properties for
discrete  subordination similar in a sense to Theorem
\ref{GTmain}.

The following statement is one of the main results of this paper.
Recall that a closed operator $A$ on $X$ is sectorial with angle
of sectoriality $\alpha \in [0,\pi)$ if $\sigma(A)\subset
\overline{\Sigma}_\alpha$ and for every $\omega\in (\alpha,\pi)$
there exists $C_\omega>0$ such that
\[
\|\lambda(\lambda-A)^{-1}\|\le C_\omega, \qquad \lambda\not\in
\overline{\Sigma}_\omega.
\]
\begin{thm}\label{main}
Let
\begin{equation}\label{convex}
g(\lambda):=\sum_{n=0}^\infty c_n \lambda^n,\qquad c_n\ge 0,\quad
\sum_{n=0}^\infty c_n=1.
\end{equation}
Then for any Ritt operator  $T$ of Stolz type $\sigma$ on a Banach
space $X,$ the operator $g(T)$ is Ritt and of the same Stolz
type. Moreover, $g(T)$ is of angle $\omega,$ where $\omega$ is a
sectoriality angle of the Cayley transform ${\mathcal C}(T)$ of
$T.$
\end{thm}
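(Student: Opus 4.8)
\emph{The plan is to isolate a purely geometric ``mapping property'' of $g$, turn it into spectral inclusions by spectral mapping, and then upgrade it to the two resolvent estimates, the delicate point being the behaviour at the fixed point $1$ of $g$.}

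\smallskip
\noindent\emph{Reductions.} Since $T$ is power bounded, so is $g(T)=\sum_n c_nT^n$ (the series converges in norm and $\|g(T)^m\|\le\sup_k\|T^k\|$), so by Proposition~\ref{rittchar} it suffices to establish, for every $\sigma'>\sigma$, the bound $\|(\lambda-g(T))^{-1}\|\le C_{\sigma'}/|\lambda-1|$ for $\lambda\notin S_{\sigma'}$, together with the corresponding estimate characterising the angle. I would first reduce to the case of a \emph{polynomial} $g$: the normalised truncations $\tilde g_N(z)=\bigl(\sum_{n=0}^{N}c_n\bigr)^{-1}\sum_{n=0}^{N}c_nz^n$ are again generating functions of probability measures, $\tilde g_N\to g$ uniformly on $\overline{\D}$ and $\tilde g_N(T)\to g(T)$ in operator norm, and Ritt/Stolz resolvent estimates pass to norm limits; hence it is enough to treat polynomial $g$, provided the constants obtained are independent of $\deg g$. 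Using the mean ergodic decomposition of the Ritt operator $T$ (which splits off $\ker(I-T)$, where $g(T)=I$), I may also assume $I-T$ injective with dense range, so that $A:=\mathcal{C}(T)=(I+T)(I-T)^{-1}$ is a well-defined closed operator, sectorial of angle $\omega$ — this is the standard equivalence between Ritt operators of angle $\omega$ and sectorial operators of angle $\omega<\pi/2$ whose negative generates a bounded holomorphic semigroup.

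\smallskip
\noindent\emph{The geometric core.} The key lemma, valid for every probability generating function $g$, is: $g(\overline{S_\sigma})\subseteq\overline{S_\sigma}$ for every $\sigma\ge1$, and, equivalently, $G:=\mathcal{C}\circ g\circ\mathcal{C}^{-1}$ is a holomorphic self-map of $\mathbb{C}_+$ which fixes $\infty$ and maps every subsector $\Sigma_\omega$ ($0<\omega<\pi/2$) into itself. I would prove this by reduction to monomials. For $z\in\D$ one has
$\frac{|1-z^n|}{1-|z|^n}=\frac{|1-z|}{1-|z|}\cdot\frac{|1+z+\dots+z^{n-1}|}{1+|z|+\dots+|z|^{n-1}}\le\frac{|1-z|}{1-|z|}$,
and, writing $z=re^{\ui\theta}$, the bounds $|\sin n\theta|\le n|\sin\theta|$ and $\sum_{k=0}^{n-1}r^{2k}\ge n\,r^{n-1}$ (AM--GM) give $\frac{|\im z^n|}{1-|z|^{2n}}\le\frac{|\im z|}{1-|z|^2}$; the first says each monomial preserves $\overline{S_\sigma}$, the second that each preserves $\mathcal{C}^{-1}(\Sigma_\omega)=\{z\in\D:\ |\im z|\le\tfrac12\tan\omega\,(1-|z|^2)\}$. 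Combining with the convex structure of $g=\sum_nc_nz^n$ — i.e. $1-g(z)=\sum_nc_n(1-z^n)$, the triangle inequality, $|g(z)|\le\sum_nc_n|z|^n$, and Jensen's inequality $|g(z)|^2\le\sum_nc_n|z|^{2n}$ — one gets $\frac{|1-g(z)|}{1-|g(z)|}\le\frac{\sum_nc_n|1-z^n|}{\sum_nc_n(1-|z|^n)}$ and $\frac{|\im g(z)|}{1-|g(z)|^2}\le\frac{\sum_nc_n|\im z^n|}{\sum_nc_n(1-|z|^{2n})}$, and the term-by-term monomial bounds conclude.

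\smallskip
\noindent\emph{From geometry to the statement.} The geometric core, with spectral mapping for the $\mathrm{A}_+^1$-calculus of $T$, yields $\sigma(g(T))\subseteq g(\sigma(T))\subseteq\overline{S_\sigma}$ and $\sigma(\mathcal{C}(g(T)))=\sigma(G(A))\subseteq G(\sigma(A))\subseteq\overline{\Sigma_\omega}$. It remains to upgrade these inclusions to norm estimates. For the angle: $\mathcal{C}(g(T))=G(A)$, and because $G$ is a holomorphic self-map of $\mathbb{C}_+$ fixing $\infty$ and preserving all subsectors, it plays exactly the role of a Bernstein function in the subordination machinery; running the argument of Theorem~\ref{GTmain}(a) with $G$ in place of $\psi$ (this is where the continuous theory of \cite{GT} is transplanted to the discrete setting) gives that $G(A)=\mathcal{C}(g(T))$ is sectorial of angle $\omega$, whence $g(T)$ is Ritt of angle $\omega$ via the Cayley correspondence. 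For the sharper claim that the \emph{Stolz type} $\sigma$ is preserved, I would run the estimate directly on the disc side using Proposition~\ref{rittchar}; a transparent model case is polynomial $g$ of degree $N$ with leading coefficient $c_N$: then $\lambda-g(z)=-c_N\prod_j(z-z_j(\lambda))$, by the geometric core every root $z_j(\lambda)$ lies outside $S_{\sigma'}$ whenever $\lambda\notin S_{\sigma'}$, and since $\prod_j|1-z_j(\lambda)|=|\lambda-1|/|c_N|$ one obtains $\|(\lambda-g(T))^{-1}\|\le |c_N|^{-1}\prod_j\|(z_j(\lambda)-T)^{-1}\|\le C_{\sigma'}^{\,N}/|\lambda-1|$ — correct for fixed $g$, but with a constant that still grows with $\deg g$.

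\smallskip
\noindent\emph{The main obstacle.} The genuine difficulty — and the place where the new technique of the paper is needed — is the behaviour at the fixed point $\zeta=1$ of $g$ (equivalently at $\infty$ for $G$ and $A$) together with the requirement of degree-free constants. A naive Riesz--Dunford representation $(\lambda-g(T))^{-1}=\frac{1}{2\pi\ui}\int_{\partial S_{\sigma_1}}(\lambda-g(\zeta))^{-1}(\zeta-T)^{-1}\,\ud\zeta$ has an integrand of size $\asymp 1/(|\zeta-1|\,|\lambda-1|)$ near $\zeta=1$, which is not integrable against arclength, and one cannot retreat from a neighbourhood of $1$ because $\sigma(T)$ may approach $1$ tangentially inside $\overline{S_\sigma}$; moreover one must not invoke a bounded $H^\infty$-calculus for $T$ on Stolz domains, which Ritt operators need not possess. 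The route through is to exploit the factorisation $1-g(z)=(1-z)\,q(z)$ with $q(z)=\sum_{k\ge0}\bigl(\sum_{n>k}c_n\bigr)z^k$, i.e. $I-g(T)=(I-T)\,q(T)$: this moves the singularity of $(\lambda-g(\cdot))^{-1}$ at $1$ onto the resolvent of $T$ itself and permits a genuinely convergent representation of $(\lambda-g(T))^{-1}$ carrying the sharp rate $1/|\lambda-1|$, with constants depending only on the Ritt/Cayley data of $T$; on the half-plane side the role of $I-T$ is taken by $A=\mathcal{C}(T)$ and that of the factorisation by a corresponding representation of $G$. Granting this, the theorem follows by assembling it with the geometric core and the Cayley correspondence.
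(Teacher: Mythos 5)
Your geometric core is sound and coincides with what the paper proves (Proposition \ref{Stangle1}, Proposition \ref{Stangle}): monomials plus convexity show that $g$ preserves Stolz domains and that $G=\mathcal{C}\circ g\circ\mathcal{C}^{-1}$ preserves subsectors, and the spectral inclusions via the $\Wip(\D)$ spectral mapping theorem (Proposition \ref{connect}) follow. The genuine gap is the step you describe as ``running the argument of Theorem \ref{GTmain}(a) with $G$ in place of $\psi$''. That argument cannot be run for $G$: the proof of Theorem \ref{GTmain}(a) rests on the Bernstein/subordinator structure (the representations \eqref{bernstein}, \eqref{bersem}, resp.\ the Stieltjes form \eqref{Cbf}), and $G$ is merely an $\mathcal{NP}_+$-function with a pointwise sector-preservation property; indeed the paper stresses that Theorem \ref{main} is \emph{stronger} than Theorem \ref{GTmain}(a) precisely because no Bernstein-type hypothesis is imposed on $(c_n)$. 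A pointwise mapping property $G(\Sigma_\omega)\subset\overline{\Sigma}_\omega$ alone does not yield sectoriality of $G(A)$ with any angle control in the absence of a bounded $H^\infty$-calculus -- which, as you yourself note, Ritt operators need not have. The paper's substitute for the missing Bernstein structure is quantitative: the class $\mathcal{D}_{\pi/2}(0,1)$ of Definition \ref{classd}, verified for $\mathbf{h}$ of \eqref{hc1} through the uniform-in-$n$ estimate $|\mathrm{Im}\,h_n(re^{i\beta})|\le -\tfrac{\pi}{2}\,r\,h_n'(br)$ for $h_n(\lambda)=((1-\lambda)/(1+\lambda))^n$ (Lemmas \ref{chi} and \ref{CorD}), fed into an explicit resolvent representation for $(z+\mathbf{h})^{-1}(A)$ obtained by deforming a Cauchy contour after the substitution $\lambda\mapsto\lambda^{q}$ (Lemma \ref{In1}, Theorems \ref{mainT} and \ref{mainT2}). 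Your alternative route through the factorisation $1-g(z)=(1-z)q(z)$ does not close the gap either: the coefficients of $q$ sum to $\sum_n nc_n$, which may be infinite, so $q\notin\Wip(\D)$ and $q(T)$ need not even be bounded, and no representation or estimate is actually produced (``Granting this\dots'').

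Two further points where your outline falls short of the statement. First, Stolz type: your polynomial root-factorisation gives constants growing like $C_{\sigma'}^{\deg g}$, so the truncation reduction you rely on cannot pass to the limit; the paper instead deduces preservation of Stolz type from the angle estimate already obtained, the spectral inclusion $\sigma(g(T))\subset\overline{S}_\sigma$, and an elementary positive-distance argument between $(\D\setminus S_\delta)\cap(1-\overline{\Sigma}_{\alpha_0})$ and $\overline S_\sigma$, yielding \eqref{dopE} for every $\delta>\sigma$. Second, the singular point $1\in\sigma(T)$: your reduction via a ``mean ergodic decomposition'' $X=\ker(I-T)\oplus\overline{\ran}(I-T)$ is not justified on a general Banach space, and even granting it you are left (in your Cayley convention) with an unbounded transform whose treatment you do not supply; the paper avoids this entirely by proving the identity $1-g(T)=\mathbf{h}(\mathcal{C}(T))$ when $1\notin\sigma(T)$ and then approximating by $T_\epsilon=g_\epsilon(T)$ (Lemma \ref{norma1}), for which $\mathcal{C}(T_\epsilon)=\mathcal{C}(T)+\epsilon$ and the constants of Theorem \ref{mainT2} are uniform in $\epsilon$, so the resolvent bounds survive the limit. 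In summary, you have the correct geometry and the correct Cayley-transform strategy, but the analytic mechanism that converts the mapping property into uniform resolvent estimates -- the actual content of the paper's new technique -- is missing.
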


This is a discrete counterpart of Theorem \ref{GTmain}, a).
However, the result seems to be stronger than Theorem
\ref{GTmain}, a) (up to a change of frameworks from the continuous
to the discrete one) since no further assumptions are imposed on
the sequence $(c_n)_{n\ge 0}.$

 Using the terminology from \cite{FoWe},
one may call a power series satisfying \eqref{convex}
\emph{convex} and formulate a (part of) statement above by saying
that \emph{a convex power series of a Ritt operator is Ritt}. This
terminology will be used throughout the paper occasionally.

Thus Theorem \ref{main} can be considered as a full discrete
analogue of Theorem \ref{GTmain}, a). However additional specific
features are present here. While angles of Ritt operators are not,
in general, preserved under discrete subordination, we have a good
control over them via the Cayley transform. On the other hand,
discrete subordination preserves  Stolz type of Ritt operators,
and probably it is Stolz type that is an adequate substitute of
sectoriality angle for sectorial operators in the discrete
setting.

Moreover, in this paper, we show that several results of Dungey on improving properties
can in fact be derived,  more or less directly, from
Theorem \ref{GTmain}, b). This is done by  transferring Theorem \ref{GTmain}, b) to the discrete setting.
  Moreover, our approach allows us to answer a question left
open by Dungey and to provide new interesting examples of improving $\mu$.

Turning to improving properties,  we establish an analogue of
Theorem \ref{GTmain}, b) by replacing complete Bernstein functions
with Hausdorff functions. This allows us not only to prove
alternative proofs for several results from \cite{Dungey} (e.g.
Theorems $1.1, 1.2, 1.3$ and Corollary $1.4$ there)  but also to
answer a problem posed in \cite[p. 1735]{Dungey} concerning the
improving property  of the function
$f_\epsilon(\lambda)=1-\frac{1}{\epsilon}\int_{0}^{\epsilon}(1-\lambda)^\alpha\,
d\alpha,\epsilon \in (0,1).$  Moreover, our approach allows us to
equip the results from \cite{Dungey} with additional geometric
properties which are not available via the techniques from
\cite{Dungey}.

To formulate our second main result, we need a notion of a regular
Hausdorff function. We say that  $h(\lambda)=c_0+\sum_{n=1}^\infty
c_n\lambda^n, \lambda \in \D,$ is a  regular Hausdorff function if $c_0\ge 0$
and there exists a bounded positive Borel measure $\nu$ on $[0,1)$
such that
\begin{equation*}\label{HaFF}
c_n=\int_{[0,1)} t^{n-1} \, \nu(dt), \quad n \ge 1,\quad \text{and}
\quad c_0+\int_{[0,1)}\frac{\nu(dt)}{1-t}=1.
\end{equation*}
Note that if $h$ is as above, then all its Taylor coefficients $c_n, n\ge 0,$ are positive and $\sum_{n=0}^{\infty}c_n=1.$
Thus, $h$ is a generating function of a probability on $\Z_+$ given by $(c_n)_{n \ge 0}.$

\begin{thm}\label{main1}
Let $h$ be a non-constant regular Hausdorff function, and let
 $\gamma\in (0,\pi/2)$ be fixed. The following statements are
 equivalent.
 \begin{itemize}
\item [(i)] One has
\begin{equation*}
1-h(\lambda)\subset \overline{\Sigma}_\gamma,\qquad \lambda\in \D.
\end{equation*}
\item [(ii)] For every  Banach space $X$ and every power
bounded operator $T$ on $X$  the operator $h(T)$ is Ritt of angle
$\gamma$.
\end{itemize}
\end{thm}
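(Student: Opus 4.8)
\emph{Proof plan.}

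\textbf{The easy implication (ii) $\Rightarrow$ (i).} The plan is to test (ii) on one–dimensional operators. Fix $\lambda_0\in\D$ and take $X=\C$, $T=\lambda_0$; then $T$ is power bounded and $h(T)=h(\lambda_0)$ is a scalar. Since $h$ is a non‑constant regular Hausdorff function its first Taylor coefficient $c_1=\nu([0,1))$ is strictly positive, whence $|h(\lambda)|\le c_0+|\lambda|\sum_{n\ge1}c_n<\sum_{n\ge0}c_n=1$ for $\lambda\in\D$; in particular $h(\lambda_0)\in\D\setminus\{1\}$. Now a scalar $a\in\D\setminus\{1\}$, viewed as the operator $aI$, can be a Ritt operator of angle $\gamma$ only if $a\in 1-\overline{\Sigma}_\gamma$: otherwise one picks $\omega'\in(\gamma,\pi)$ with $a\notin 1-\overline{\Sigma}_{\omega'}$, and then $\|(\zeta-aI)^{-1}\|=|\zeta-a|^{-1}$ is unbounded as $\zeta\to a$ inside the region $\C\setminus(1-\overline{\Sigma}_{\omega'})$ on which the Ritt estimate $\|(\zeta-aI)^{-1}\|\le C_{\omega'}|\zeta-1|^{-1}$ must hold. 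Hence $1-h(\lambda_0)\in\overline{\Sigma}_\gamma$, and since $\lambda_0\in\D$ is arbitrary, (i) follows.

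\textbf{The hard implication (i) $\Rightarrow$ (ii): reduction.} First I would rewrite the data on the half–plane. Using $c_n=\int_{[0,1)}t^{n-1}\,\nu(dt)$ and the normalisation one gets, for $\lambda\in\D$,
\[
1-h(\lambda)=(1-\lambda)\int_{[0,1)}\frac{\mu_0(dt)}{1-t\lambda},\qquad \mu_0(dt):=\frac{\nu(dt)}{1-t},
\]
$\mu_0$ a finite positive measure of mass $1-c_0$. Substituting $\lambda=\frac{1-z}{1+z}$ (a conformal bijection of $\mathbb{C}_+$ onto $\D$) and $s=\frac{1-t}{1+t}$ turns this into $\psi(z):=1-h\bigl(\tfrac{1-z}{1+z}\bigr)=\int_{(0,1]}\frac{z}{z+s}\,\sigma(ds)$ for a finite positive measure $\sigma$ on $(0,1]$; thus $\psi$ is a complete Bernstein function and hypothesis (i) says exactly $\psi(\mathbb{C}_+)\subset\overline{\Sigma}_\gamma$. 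Given a power bounded $T$ on $X$, it then suffices to show that $I-h(T)$ is sectorial of angle $\gamma$: together with the spectral inclusion $\sigma(h(T))=h(\sigma(T))\subset h(\overline{\D})\subset\D\cup\{1\}$ (valid because $h$ satisfies (i) with $\gamma<\pi/2$, hence $h\ne\mathrm{id}$ and $|h|<1$ on $\overline{\D}\setminus\{1\}$), this yields, after the substitution $\lambda=1-\zeta$ and by the resolvent characterisation of Ritt operators recalled before Proposition \ref{rittchar}, that $h(T)$ is Ritt of angle $\gamma$.

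\textbf{The transfer step.} To get the sectoriality of $I-h(T)$ I would realise it through the operator Bernstein calculus applied to a Cayley transform. For $r\in(0,1)$, $\sigma(rT)\subset r\overline{\D}$, so $-1\notin\sigma(rT)$ and $A_r:=(I-rT)(I+rT)^{-1}$ is a bounded operator whose spectrum lies in a compact subset of $\mathbb{C}_+$; hence $-A_r$ generates a bounded $C_0$-semigroup and, since $\frac{I-A_r}{I+A_r}=rT$, the composition rule for the holomorphic functional calculus gives $\psi(A_r)=I-h(rT)$. Applying Theorem \ref{GTmain}, b) to $\psi$ (for which $\psi(\mathbb{C}_+)\subset\overline{\Sigma}_\gamma$) shows each $I-h(rT)$ is sectorial of angle $\gamma$. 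Since $h(rT)\to h(T)$ in operator norm as $r\to1-$ (dominated convergence in the series, using power boundedness of $T$), passing to the limit finishes the proof — \emph{provided the sectoriality constants can be taken independent of $r$}.

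\textbf{The main obstacle.} This uniformity in $r$ is the hard part. The constant supplied by Theorem \ref{GTmain}, b) depends, through the proof in \cite{GT}, only on $\gamma,\omega$ and on $\sup_{t\ge0}\|e^{-tA_r}\|$; but on a general Banach space $\sup_{0<r<1}\sup_{t\ge0}\|e^{-tA_r}\|$ need not be finite (it is infinite already for the shift on $\ell_1$, the obstruction being the essential singularity of $z\mapsto e^{-t(1-z)/(1+z)}$ at $z=-1$), so one cannot simply read the constant off a semigroup bound. I would instead avoid scaling $T$ and work with the uniformly controlled operator $A:=I-T$: here $-A=T-I$ generates $e^{-tA}=e^{-t}e^{tT}$, bounded by $\sup_n\|T^n\|$, and $\|(\xi I+A)^{-1}\|=\|((1+\xi)I-T)^{-1}\|\le\sup_n\|T^n\|/\re\xi$ for $\re\xi>0$, both uniformly; and $\phi(w):=1-h(1-w)$ is again a complete Bernstein function with $\phi(I-T)=I-h(T)$, so Theorem \ref{GTmain}, b) applies with uniform constants \emph{once one knows that $\phi(\mathbb{C}_+)\subset\overline{\Sigma}_\gamma$, i.e.\ that $1-h(\zeta)\in\overline{\Sigma}_\gamma$ for all $\zeta$ with $\re\zeta<1$, not just for $\zeta\in\D$}. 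This last, purely function‑theoretic, statement — that a regular Hausdorff function satisfying (i) distorts angles over the half–plane $\{\re\zeta<1\}$ no more than over $\D$ — is, I expect, the genuine content of the theorem and the place where the paper's new technique enters: the two regions share the same tangent cone at the boundary point $\zeta=1$, near which $1-h$ behaves like a fractional power, while away from $\zeta=1$ one must exploit the representation $1-h(\zeta)=(1-\zeta)\int_{[0,1)}(1-t\zeta)^{-1}\mu_0(dt)$.
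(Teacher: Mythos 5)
Your implication (ii) $\Rightarrow$ (i) is fine, and in fact simpler than the paper's (which tests on a multiplication operator on $C(\overline{\D})$ following Dungey): testing on scalars $T=\lambda_0\in\D$ and noting $|h(\lambda_0)|<1$ does the job. The problem is the implication (i) $\Rightarrow$ (ii), where your plan ends with an unproven "function-theoretic statement" that you expect to be the content of the theorem; that statement is actually \emph{false}, so the route you settle on cannot be repaired. Concretely, take $h(\lambda)=\tfrac12\lambda+\tfrac12\bigl(1-(1-\lambda)^{\alpha}\bigr)$ with $\alpha\in(0,1)$; this is a regular Hausdorff function (a convex combination of the regular Hausdorff functions $\lambda$, with $\nu=\delta_0$, and $1-(1-\lambda)^\alpha$ from Example \ref{discex}). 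Then $1-h(\lambda)=\tfrac12(1-\lambda)+\tfrac12(1-\lambda)^\alpha$ maps $\D$ into $\overline{\Sigma}_\gamma$ for some $\gamma<\pi/2$ (near $\lambda=1$ the fractional power dominates and gives argument about $\alpha\pi/2$; away from $\lambda=1$ the argument of $1-\lambda$ is bounded away from $\pi/2$ on $\overline{\D}$), so (i) holds; but for $\zeta=1-iy$, $y\to\infty$, the linear term dominates and $\arg\bigl(1-h(\zeta)\bigr)\to-\pi/2$, so $1-h$ does \emph{not} map $\{\re\zeta<1\}$ into any $\overline{\Sigma}_{\gamma'}$ with $\gamma'<\pi/2$. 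Hence your final reduction (work with $A=I-T$ and apply Theorem \ref{GTmain}, b) to $\phi(w)=1-h(1-w)$) collapses: the hypothesis $\phi(\C_+)\subset\overline{\Sigma}_\gamma$ simply is not available, and no angle-extension from $\D$ to the half-plane can be true for this class.

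The paper resolves exactly the two obstructions you ran into, but differently. First, instead of trying to extend the range condition of $\psi(\lambda)=1-h(1-\lambda)$ from $1+\D$ to $\C_+$, it composes with the M\"obius complete Bernstein function $\varphi(\lambda)=2\lambda/(\lambda+1)$, which maps $\C_+$ onto $1+\D$; then $\psi\circ\varphi\in\mathcal{CBF}$ and $(\psi\circ\varphi)(\C_+)\subset\overline{\Sigma}_\gamma$ follows from (i) alone, and one evaluates at $\varphi^{-1}(1-T)=\mathcal{C}(T)$ (this is essentially your first route with $A_r=\mathcal{C}(rT)$, which you abandoned). Second, the uniformity/semigroup problem you correctly identified for Theorem \ref{GTmain}, b) (one would need $-\mathcal{C}(rT)$ to generate bounded $C_0$-semigroups, uniformly) is avoided because the paper does not use Theorem \ref{GTmain}, b) quantitatively: it uses its own Theorem \ref{MT10} (built on the resolvent representation of Theorem \ref{mainT}), which requires only that $A$ be bounded, injective and in $\Sect(\pi/2)$, with explicit constants depending only on $\|A\|$ and the sectoriality constant; by Proposition \ref{Lm} these are controlled solely by $\sup_n\|T^n\|$ and $\|T\|$, hence are uniform along the approximation $T_\epsilon=(1-\epsilon)T$, and the limit passage (norm convergence $h(T_\epsilon)\to h(T)$ in $\Wip(\D)$-calculus plus Proposition \ref{connect} and \eqref{kato}) finishes the proof. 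So the missing ingredient in your proposal is precisely this sectoriality-based transfer theorem with uniform constants, together with the composition trick replacing the (false) half-plane extension of (i).
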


Let us comment briefly on our techniques and methodology. It is a
notable  feature of the paper that dealing with bounded operators
we apply methods worked out, first of all, to treat unbounded
operators. We remark  that the techniques of the present paper is
quite different from the techniques from \cite{GT}. While
\cite{GT} put an emphasis on intricate functional calculi
arguments, the approach presented here is more direct. It relies
on deriving a suitable resolvent representation for a (rotated)
Nevanlinna-Pick function of the Cayley transform of $T$ and
function-theoretical estimates for certain  Nevanlinna-Pick
functions. This allows us to obtain results which seem to be more
informative than the corresponding statements in \cite{GT} (when
changing the notions appropriately). As an alternative to
functional calculus technique from \cite{GT}, a similar direct
approach to the study of subordination of $C_0$-semigroups was
recently developed in \cite{BGT}.  Note finally that the main results of this paper
have recently found interesting applications to the study of convolution operators on  $\ell^1(\mathbb Z)$,
 see \cite{CCuny} for more details.

\section{Notation}

For a closed linear operator $A$ on a complex Banach space $X$ we
denote by  $\ran(A),$ $\ker(A)$ and $\sigma(A)$ the {\em range},
the {\em kernel} and the {\em spectrum} of $A$, respectively. The
space of bounded linear operators on $X$ is denoted by
$\mathcal{L}(X).$

The closure of a set $S$ will be denoted by
$\overline{S},$ and $f \circ g$ will stand for a  composition of
functions $f$ and $g.$ For any sets $A$ and $B$ from the complex
plane $\C,$ we denote $A+B:=\{a+b: a \in A, b \in B\}$ and
sometimes write $a$ instead of $\{a\}.$

Finally, we let
\[
\mathbb C_{+}=\{\lambda \in \C:\,{\rm Re}\,\lambda>0\},\qquad \mathbb C^{+}=\{\lambda \in \C:\,{\rm Im}\,\lambda>0\}, \qquad
\mathbb Z_+=\mathbb N \cup \{0\},
\]
and  denote
\[
\Sigma_{0}:=(0,\infty), \qquad  \Sigma_{\beta}:=\{\lambda \in \C:\,\, |\arg \lambda|<\beta\}, \, \beta\in
(0,\pi],
\]
and $\D:=\{\lambda \in \mathbb C: |\lambda|<1 \}.$
\section{Function theory}

\subsection{Nevanlinna-Pick and Cayley functions and their mapping properties}\label{funsec}

To develop the functional calculi machinery, we have to introduce
several function classes and describe their basic properties.

Recall that a function $F$ holomorphic in  the upper half-plane
$\mathbb C^+$ is called Nevanlinna-Pick if $F(\mathbb C^+)\subset
\overline{\mathbb C}_+$. Since we will be interested in functions
defined in the right half-plane $\C_+$, we will need a class of
rotated Nevanlinna-Pick functions, namely the class of functions
holomorphic in $\mathbb C_+$ and  mapping $\C_+$ into
$\overline{\mathbb C}_+$. Moreover, the functions from this latter
class that are positive on $(0,\infty)$ will play a special role.
Thus, eventually, we will work with the class of functions $F$
denoted by $\mathcal{NP_+}$ and described as
$$
\mathcal{NP}_+:=\{F \,\, \text{is holomorphic in}\,\,  \C_+:
F(\mathbb C_+) \subset \overline{\mathbb C}_+ \, \, \text{and}
\,\, F((0,\infty))\subset [0,\infty) \}
$$
Note that the symmetry principle implies $F(\lambda)= \overline{ F(\bar \lambda)}, \lambda
\in \C_+.$

Recall that since the function $ f(\lambda):=iF(-i\lambda), \lambda\in \C^{+}, $ is
Nevanlinna-Pick, the well-known Herglotz theorem (see e. g.
\cite[Corollary 6.8]{SchilSonVon2010}) implies that $f: (0,\infty) \to [0,\infty)$ and
\begin{equation}\label{herg}
f(\lambda)=iF(-i\lambda)=\alpha +a \lambda+ \int_{-\infty}^\infty
\frac{1+\lambda t}{t-\lambda}\,\rho(dt), \qquad \lambda \in \mathbb C^+,
\end{equation}
where  $\alpha\in \R$, $a \ge 0$, and $\rho$ is a positive finite
Borel measure on the real line.

The following theorem contains several properties of $\mathcal{NP_+}$-functions crucial for the sequel.
\begin{thm}\label{NP}
Let $F \in \mathcal{NP_+}.$ Then the following statements hold.
\begin{itemize}
\item [(i)] One has
\begin{equation}\label{A}
F(\lambda)=a \lambda+\frac{b}{\lambda}+2\lambda\int_{(0,\infty)}
\frac{(1+t^2)\,\rho(dt)}{\lambda^2+t^2},\qquad \lambda\in \C_{+},
\end{equation}
where  $a\ge 0$, $b\ge 0,$ and $\rho$  is a positive finite
Borel measure on $(0,\infty)$.
\item [(ii)] For every $\omega \in [0,\pi/2)$ there exists $c_\omega>0$ such that
\begin{equation}\label{sbounds}
|F(\lambda)|\le c_\omega \left(|\lambda|+|\lambda|^{-1}\right), \qquad \lambda \in\Sigma_\omega.
\end{equation}
\item [(iii)] For every $\omega\in [0,\pi/2),$
\begin{equation}\label{PrB}
F(\overline{\Sigma}_\omega\setminus \{0\})\subset
\overline{\Sigma}_\omega.
\end{equation}
\item [(iv)] For all $\beta\in
(-\pi/2,\pi/2),$
\begin{equation}\label{A11}
{\rm Re}\,F(te^{i\beta})\ge
\cos\beta \, F(t),\qquad t>0.
\end{equation}
Moreover, for every  $c\in (0,1],$
\begin{equation}\label{B1}
F(t)\ge c\, F(c t),\qquad t>0,
\end{equation}
and
\begin{equation}\label{LPB1}
|F(te^{i\beta})|\ge
c\cos\beta \, F(ct),\qquad t>0.
\end{equation}
\end{itemize}
\end{thm}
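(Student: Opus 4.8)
The plan is to derive the entire theorem from the integral representation \eqref{A} of part (i), after which (ii)--(iv) follow from elementary pointwise estimates on the kernel $\lambda\mapsto\lambda/(\lambda^2+t^2)$. For (i) I would begin with the Herglotz representation \eqref{herg} of $f(\lambda)=iF(-i\lambda)$ and exploit the symmetry principle $F(\lambda)=\overline{F(\bar\lambda)}$, which transcribes to $f(-\bar\lambda)=-\overline{f(\lambda)}$ on $\C^+$. Inserting \eqref{herg} on both sides, substituting $t\mapsto-t$ in the integral, and using uniqueness of the Herglotz data should force $\alpha=0$ and force $\rho$ to be invariant under $t\mapsto-t$. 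Then, setting $\mu\in\C_+$ and $\lambda=i\mu\in\C^+$, I would split $\int_{-\infty}^\infty=\int_{(-\infty,0)}+\int_{\{0\}}+\int_{(0,\infty)}$ and fold the negative half onto the positive one by symmetry of $\rho$; the two kernels collapse through
\[
\frac{1+i\mu t}{t-i\mu}-\frac{1-i\mu t}{t+i\mu}=\frac{2i\mu(1+t^2)}{t^2+\mu^2},
\]
while the atom at $0$ contributes $\rho(\{0\})/\mu$. Multiplying by $-i$ (note $F(\mu)=-if(i\mu)$) then gives \eqref{A}, with $a\ge0$ the coefficient from \eqref{herg}, $b:=\rho(\{0\})\ge0$, and $\rho$ now read as its (positive, finite) restriction to $(0,\infty)$.

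Granting \eqref{A}, part (ii) rests on the bound $|\lambda^2+t^2|\ge\cos\omega\,(|\lambda|^2+t^2)$ for $\lambda\in\Sigma_\omega$: writing $\lambda=re^{i\theta}$ with $|\theta|\le\omega$ one has $\cos2\theta\ge\cos2\omega$, hence $|\lambda^2+t^2|^2\ge(r^2-t^2)^2+4r^2t^2\cos^2\omega\ge\cos^2\omega\,(r^2+t^2)^2$; combining this with $\frac{1+t^2}{|\lambda|^2+t^2}\le\max\{1,|\lambda|^{-2}\}$ and $2r\max\{1,r^{-2}\}\le2(r+r^{-1})$ and inserting into \eqref{A} yields \eqref{sbounds}, e.g.\ with $c_\omega=a+b+2\|\rho\|/\cos\omega$. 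For part (iii), when $\lambda\in\overline{\Sigma}_\omega\setminus\{0\}$ and $\omega<\pi/2$ each summand of \eqref{A} lies in the closed convex cone $\overline{\Sigma}_\omega$: this is immediate for $a\lambda$ and $b/\lambda$, and for $\lambda/(\lambda^2+t^2)$ it holds because $\arg(\lambda^2+t^2)$ lies between $0$ and $2\arg\lambda$ (one adds a positive real to $\lambda^2$), so $\arg\bigl(\lambda/(\lambda^2+t^2)\bigr)\in[-|\arg\lambda|,|\arg\lambda|]$; since $\overline{\Sigma}_\omega$ is stable under sums and under integration of $\overline{\Sigma}_\omega$-valued maps against positive measures, \eqref{PrB} follows.

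Part (iv) I would base on the identity, valid for $t,s>0$ and $|\beta|<\pi/2$,
\[
\re\frac{te^{i\beta}}{t^2e^{2i\beta}+s^2}=\frac{t\cos\beta\,(t^2+s^2)}{t^4+2t^2s^2\cos2\beta+s^4}\ge\cos\beta\cdot\frac{t}{t^2+s^2},
\]
where the inequality is equivalent to $2t^2s^2(1-\cos2\beta)\ge0$; applying it termwise to \eqref{A} (together with $\re(ate^{i\beta})=at\cos\beta$ and $\re((b/t)e^{-i\beta})=(b/t)\cos\beta$) gives \eqref{A11}. Likewise, for $c\in(0,1]$ one checks termwise that $at\ge ac^2t$, $b/t=b/t$, and $\frac{t}{t^2+s^2}\ge\frac{c^2t}{c^2t^2+s^2}$ (equivalent to $s^2\ge c^2s^2$), which gives \eqref{B1}; finally \eqref{LPB1} is the chain $|F(te^{i\beta})|\ge\re F(te^{i\beta})\ge\cos\beta\,F(t)\ge c\cos\beta\,F(ct)$, using $\cos\beta>0$ and $F\ge0$ on $(0,\infty)$.

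The only nonroutine step is (i): one must track carefully the $90^{\circ}$ rotations linking $F$ and $f$, the way the reflection symmetry of $F$ becomes a symmetry of the Herglotz measure, and the bookkeeping in folding $\rho$ onto $(0,\infty)$ while isolating the possible atom at the origin. Once \eqref{A} is in place, parts (ii)--(iv) are purely computational, each reducing to a scalar inequality obtained by clearing denominators.
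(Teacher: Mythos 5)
Your proposal is correct, and its skeleton matches the paper's: everything in (ii)--(iv) is deduced from the representation \eqref{A}, and your proofs of \eqref{B1} (via $\tfrac{t}{t^2+s^2}\ge \tfrac{c^2t}{c^2t^2+s^2}$) and of \eqref{LPB1} (via $|F|\ge\re F$ together with \eqref{A11} and \eqref{B1}) are literally the ones in the paper. The difference is that where the paper simply cites the literature -- \eqref{A} to Cauer, \eqref{PrB} to Goldberg/Richards (as a consequence of (i)), \eqref{A11} to Brown -- you supply self-contained derivations: (i) from the Herglotz formula \eqref{herg} by showing the symmetry $F(\lambda)=\overline{F(\bar\lambda)}$ forces $\alpha=0$ and a reflection-invariant measure, then folding onto $(0,\infty)$ with the identity $\frac{1+i\mu t}{t-i\mu}-\frac{1-i\mu t}{t+i\mu}=\frac{2i\mu(1+t^2)}{t^2+\mu^2}$ and isolating the atom at $0$ as $b/\mu$; (iii) from the convex-cone argument $\arg(\lambda^2+t^2)$ lying between $0$ and $2\arg\lambda$; and \eqref{A11} from the termwise inequality $\re\frac{te^{i\beta}}{t^2e^{2i\beta}+s^2}\ge\cos\beta\,\frac{t}{t^2+s^2}$, which I checked reduces to $2t^2s^2(1-\cos2\beta)\ge0$. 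Your quantitative bound in (ii), $c_\omega=a+b+2\|\rho\|/\cos\omega$ via $|\lambda^2+t^2|\ge\cos\omega\,(|\lambda|^2+t^2)$, is also sound. What your route buys is independence from the cited sources (Cauer, Goldberg, Brown) at the cost of redoing classical material; the paper's route buys brevity and a pointer to sharper estimates (Brown) but leaves the key representation unproved in situ. The only step where care is genuinely needed is the one you flag yourself: the rotation bookkeeping and the uniqueness of the Herglotz data $(\alpha,a,\rho)$, which is standard and legitimately invoked.
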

\begin{proof}
The proofs of (i), (iii) and \eqref{A11} can be found e.g. in \cite{Cauer}, \cite[Corolllary 2]{Goldberg} (or \cite[Theorem 2]{Richards}), and \cite[Theorem 3.4]{Brown}, respectively.  The paper \cite{Brown} contains a unified approach to the proofs of these and similar properties.
The property (iii) goes back to \cite{Brune}, and it is a direct consequence of (i).

The property (ii) is an easy consequence of (i) as well, more general estimates can be found in \cite{Brown}.

To prove (\ref{B1}), it suffices to note that
\[
\frac{t}{t^2+s^2}\ge c \frac{c t}{(c t)^2+s^2},\qquad t,s>0,\quad c\in (0,1],
\]
 hence (\ref{A}) implies (\ref{B1}).
The statement (\ref{LPB1}) follows from
$|F(te^{i\beta})|\ge {\rm Re}\,F(te^{i\beta})$, $t>0$,
and (\ref{A11}), (\ref{B1}).
\end{proof}

Let $\theta_1,\theta_2\in (0,\pi]$. We say that $f\in
\mathcal{NP_+}(\theta_1,\theta_2)$ if $f$ is holomorphic in
$\Sigma_{\theta_1}$ and, moreover
\[
f:\,(0,\infty)\to [0,\infty) \quad \text{and} \quad
f(\Sigma_{\theta_1})\subset \overline{\Sigma}_{\theta_2}.
\]
Denote $\mathcal{NP_+}(\theta):= \mathcal{NP_+}(\theta,\theta)$ so
that $\mathcal{NP_+}= \mathcal{NP_+}(\pi/2)$.

Observe that  that $f\in \mathcal{NP_+}(\theta_1,\theta_2)$ if and
only if
\begin{equation}\label{fmapF}
F(\lambda):= [f(\lambda^{2{\theta_1}/\pi})]^{\pi/(2\theta_2)}\in
\mathcal{NP_+}.
\end{equation}

The next corollary provides a lower bound for $f \in \mathcal{NP_+}(\theta_1,\theta_2)$
in terms of the restriction of $f$ to $(0,\infty).$

\begin{cor}\label{NPabsd}
Let $f \in \mathcal{NP_+}(\theta_1,\theta_2)$ for some
$\theta_1,\theta_2\in (0,\pi]$. Then for every $\theta\in
[0,\theta_1),$
\begin{equation}\label{NP101}
f(\overline{\Sigma}_\theta\setminus \{0\})\subset
\overline{\Sigma}_{\theta \theta_2/\theta_1},
\end{equation}
and for all $c\in (0,1]$ and $\beta\in (-\theta_1,\theta_1),$
\begin{equation}\label{NP102}
|f(te^{i\beta})|\ge
c^{2\theta_2/\pi}\cos^{2\theta_2/\pi}\left(\frac{\pi\beta}{2\theta_1}
\right)f(c^{2\theta_1/\pi} t),\qquad t>0.
\end{equation}
\end{cor}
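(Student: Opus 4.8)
The plan is to transfer the assertions \eqref{PrB} and \eqref{LPB1} of Theorem \ref{NP} from an arbitrary $F\in\mathcal{NP_+}$ to $f$ by means of the substitution \eqref{fmapF}, tracking carefully the branches of the fractional powers involved. So I would first record the substitution and its inverse. Set $F(\lambda):=[f(\lambda^{2\theta_1/\pi})]^{\pi/(2\theta_2)}$; by the observation preceding the corollary, $F\in\mathcal{NP_+}$. For $\lambda\in\C_+$ one has $|\arg(\lambda^{2\theta_1/\pi})|=\tfrac{2\theta_1}{\pi}|\arg\lambda|<\theta_1$, so $\lambda^{2\theta_1/\pi}\in\Sigma_{\theta_1}$ and the composition (with principal branches) is well defined; conversely, for $\zeta\in\Sigma_{\theta_1}$ the point $\lambda:=\zeta^{\pi/(2\theta_1)}$ lies in $\C_+$, satisfies $\lambda^{2\theta_1/\pi}=\zeta$, and hence $f(\zeta)=[F(\zeta^{\pi/(2\theta_1)})]^{2\theta_2/\pi}$. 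All arguments occurring below stay strictly below $\pi/2$ in modulus (and $|\arg f(\cdot)|\le\theta_2$ on the relevant domain), so no branch ambiguity arises; this bookkeeping with principal branches is the only, rather mild, obstacle.

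For \eqref{NP101}, the case $\theta=0$ is immediate from $f((0,\infty))\subset[0,\infty)$. For $\theta\in(0,\theta_1)$ take $w\in\overline{\Sigma}_\theta\setminus\{0\}$ and put $\lambda:=w^{\pi/(2\theta_1)}$, so that $\lambda\in\overline{\Sigma}_\omega\setminus\{0\}$ with $\omega:=\tfrac{\pi\theta}{2\theta_1}\in[0,\pi/2)$. By \eqref{PrB}, $F(\lambda)\in\overline{\Sigma}_\omega$; since $\lambda^{2\theta_1/\pi}=w$, this reads $[f(w)]^{\pi/(2\theta_2)}\in\overline{\Sigma}_\omega$, i.e. $\tfrac{\pi}{2\theta_2}\,|\arg f(w)|\le\omega$ (the conclusion being trivial when $f(w)=0$, since $0\in\overline{\Sigma}_{\theta\theta_2/\theta_1}$). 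Hence $|\arg f(w)|\le\theta\theta_2/\theta_1$, which is \eqref{NP101}.

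For \eqref{NP102}, fix $t>0$, $\beta\in(-\theta_1,\theta_1)$ and $c\in(0,1]$, and set $s:=t^{\pi/(2\theta_1)}$ and $\gamma:=\tfrac{\pi\beta}{2\theta_1}\in(-\pi/2,\pi/2)$, so that $(se^{i\gamma})^{2\theta_1/\pi}=te^{i\beta}$ and $|f(te^{i\beta})|=|F(se^{i\gamma})|^{2\theta_2/\pi}$. Applying \eqref{LPB1} to $F$ at the point $se^{i\gamma}$ with the same $c$ gives $|F(se^{i\gamma})|\ge c\cos\gamma\cdot F(cs)$, and $F(cs)=[f((cs)^{2\theta_1/\pi})]^{\pi/(2\theta_2)}=[f(c^{2\theta_1/\pi}t)]^{\pi/(2\theta_2)}$. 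Raising the displayed inequality to the power $2\theta_2/\pi$ and substituting the value of $\gamma$ then yields \eqref{NP102}, completing the plan.
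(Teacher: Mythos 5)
Your argument is correct and follows essentially the same route as the paper's proof: both pass to $F(\lambda)=[f(\lambda^{2\theta_1/\pi})]^{\pi/(2\theta_2)}\in\mathcal{NP_+}$ via \eqref{fmapF}, deduce \eqref{NP101} from \eqref{PrB} and \eqref{NP102} from \eqref{LPB1} by the change of variable $\lambda\mapsto\lambda^{\pi/(2\theta_1)}$ and raising to the power $2\theta_2/\pi$. The extra bookkeeping you do with branches and with $\beta<0$ (which the paper leaves implicit, proving only $\beta\in(0,\theta_1)$) is a harmless elaboration, not a different method.
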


\begin{proof}
Since, $f(\lambda)=[F(\lambda^{\pi/(2\theta_1)})]^{2\theta_2/\pi}, \lambda \in \Sigma_{\theta_1},$ where $F$ is defined by \eqref{fmapF},
the statement (\ref{NP101}) follows directly from
 \eqref{PrB}. Next, if $t>0$ and $\beta\in (0,\theta_1)$,
then by (\ref{LPB1})  for   every $c\in (0,1],$
\begin{align*}
|f(te^{i\beta})|=&|F(t^{\pi/(2\theta_1)}e^{i\pi\beta/(2\theta_1)})|^{2\theta_2/\pi}\\
\ge&
c^{2\theta_2/\pi}\cos^{2\theta_2/\pi}(\pi\beta/(2\theta_1))[F(c
t^{\pi/(2\theta_1)})]^{2\theta_2/\pi}\\
=&c^{2\theta_2/\pi}\cos^{2\theta_2/\pi}(\pi\beta/(2\theta_1))f(c^{2\theta_1/\pi}
t).
\end{align*}
\end{proof}

The \emph{subclass}  of $\mathcal{\mathcal{NP_+}}$ formed by
complete Berntsein functions and denoted by $\mathcal{CBF}$ will
also be important in our considerations. Complete Bernstein
functions allow a number of equivalent characterizations. The
following one, which can serve as the definition of a complete
Bernstein function, can be found in \cite[Theorem
6.2]{SchilSonVon2010}. We  say that the function $\psi :
(0,\infty)\mapsto [0,\infty)$ is \emph{complete Bernstein} if
$\psi$ admits  an analytic continuation to $\C\setminus
(-\infty,0]$ which is given by
\begin{equation}\label{Cbf}
\psi(\lambda)=a+b\lambda+\int_{(0,\infty)}
\frac{\lambda\,\mu(ds)}{\lambda+s},
\end{equation}
where $a,b\ge 0$ are non-negative constants and $\mu$ is a
positive Borel measure on $(0,\infty)$ such that
\begin{equation}\label{mu}
\int_{(0,\infty)} \frac{\mu(ds)}{1+s}<\infty.
\end{equation}
Given $\psi,$ the triple $(a,b,\mu)$ is determined uniquely, and it is called the
Stieltjes representation of $\psi.$ The standard examples of
complete Bernstein functions include $\lambda^\alpha, \alpha \in
[0,1],\log(1+\lambda)$ and $\lambda/(\lambda+a), a >0.$

The class $\mathcal{CBF}$ has a rich structure which is
particularly suitable for functional calculi purposes. We will
need just a few of them and refer to  \cite[Sections 6 and
7]{SchilSonVon2010} for a comprehensive account. In particular,
the following elementary properties of ${\mathcal CBF}$ will be
useful, see e.g. \cite[Theorem 6.2 and Corollary
7.6]{SchilSonVon2010} for their discussion.
\begin{thm}\label{cbfprop}
\begin{enumerate} \item [(i)] Let $\psi$ be a holomorphic function in $\mathbb C \setminus (-\infty, 0].$ Then $\psi \in \mathcal{CBF}$ if and only if
 $\psi \left(\C^+ \right)\subset \overline{\C^+}, \psi((0,\infty))\subset[0,\infty),$
and
$
 \text{there exists}\,\, \psi(0+)=\lim_{\lambda\to 0+}\,\psi(\lambda);
$
\item [(ii)] Let $\psi, \varphi \in \mathcal{CBF}.$ Then $\psi+\varphi, \psi\circ \varphi \in \mathcal {CBF}.$
\end{enumerate}
\end{thm}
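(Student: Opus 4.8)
The plan is to regard (i) as a genuine characterisation theorem — it is, up to the precise shape of the limit condition, \cite[Theorem~6.2]{SchilSonVon2010} — whose ``necessity'' half can be read off \eqref{Cbf} and whose ``sufficiency'' half is obtained from the Herglotz representation on a half-plane; assertion (ii) is then deduced from (i) together with \eqref{Cbf}.

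For \emph{necessity} in (i): if $\psi\in\mathcal{CBF}$ is given by \eqref{Cbf}, I would simply read off the three properties. For $\im\lambda>0$ one has $\im(b\lambda)=b\,\im\lambda\ge0$ and $\im\frac{\lambda}{\lambda+s}=\frac{s\,\im\lambda}{|\lambda+s|^2}\ge0$ for each $s>0$, hence $\psi(\C^+)\subset\overline{\C^+}$; for $\lambda\in(0,\infty)$ the right-hand side of \eqref{Cbf} is a sum of non-negative reals; and since $0\le\frac{\lambda}{\lambda+s}\le\frac1{1+s}$ for $\lambda\in(0,1]$, dominated convergence using \eqref{mu} shows that $\psi(0+)=a$ exists. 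All of this is routine.

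The real work is \emph{sufficiency} in (i). If $\psi$ is constant the claim is trivial (take $a=\psi\ge0$, $b=0$, $\mu=0$), so I assume $\psi$ non-constant; then the open mapping theorem gives $\psi(\C^+)\subset\C^+$, so $\psi|_{\C^+}$ is a non-constant Pick function and the Herglotz theorem (cf.~\eqref{herg}) yields
\[
\psi(\lambda)=\alpha+a_0\lambda+\int_{\R}\frac{1+t\lambda}{t-\lambda}\,\rho(dt),\qquad \im\lambda>0,
\]
with $\alpha\in\R$, $a_0\ge0$ and $\rho$ a finite positive Borel measure. Since $\psi$ continues holomorphically across $(0,\infty)$ and is real there, $\im\psi(x+iy)\to0$ as $y\downarrow0$ locally uniformly for $x>0$, so by Stieltjes inversion $\rho$ puts no mass on $(0,\infty)$; thus $\supp\rho\subset(-\infty,0]$. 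The change of variables $t\mapsto -t$ together with the elementary identity $\frac{1+t\lambda}{t-\lambda}=\frac{1+t^2}{-t}\cdot\frac{\lambda}{\lambda-t}+\frac1t$ (valid for $t<0$) then brings the representation into the shape \eqref{Cbf}, with $b=a_0$, with $a$ a real constant, and with representing measure the image of $\frac{1+t^2}{-t}\,\rho(dt)$ under $t\mapsto -t$. The one genuinely delicate point — and the one I expect to cost the most care — is that the bare existence of $\psi(0+)$ is exactly what makes this legitimate: it excludes an atom of $\rho$ at $0$, it forces $\int_{(-1,0)}|t|^{-1}\,\rho(dt)<\infty$ and hence \eqref{mu}, and it pins down $a=\psi(0+)\ge0$. (Were $\int_{(-1,0)}|t|^{-1}\,\rho(dt)=\infty$, comparing $\psi(\lambda)$ with $\psi(1)$ through $\psi(\lambda)-\psi(1)=a_0(\lambda-1)+(\lambda-1)\int_{(-\infty,0]}\frac{(1+t^2)\,\rho(dt)}{(t-\lambda)(t-1)}$ and letting $\lambda\downarrow0$ would force $\psi(\lambda)\to-\infty$, contradicting $\psi\ge0$ on $(0,\infty)$.)

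For (ii) I would use (i) throughout. The sum is immediate from \eqref{Cbf}: adding the representations of $\psi$ and $\varphi$ adds the constants and the measures and preserves \eqref{mu}, so $\psi+\varphi\in\mathcal{CBF}$. For the composition, the constant cases being trivial, assume $\varphi$ non-constant; differentiating \eqref{Cbf} gives $\varphi'(t)=b+\int_{(0,\infty)}\frac{s}{(t+s)^2}\,\mu(ds)>0$ on $(0,\infty)$, so $\varphi$ is strictly increasing there with $\varphi(t)>\varphi(0+)=a\ge0$, i.e.\ $\varphi\big((0,\infty)\big)\subset(0,\infty)$. Together with $\varphi(\C^+)\subset\C^+$ (open mapping) and the reflection identity $\varphi(\bar\lambda)=\overline{\varphi(\lambda)}$, this shows that $\varphi$ maps $\C\setminus(-\infty,0]$ into itself, so $\psi\circ\varphi$ is holomorphic there. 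It then meets the three requirements of (i): $\psi(\varphi(\C^+))\subset\psi(\C^+)\subset\overline{\C^+}$; $\psi(\varphi((0,\infty)))\subset\psi((0,\infty))\subset[0,\infty)$; and $\lim_{\lambda\downarrow0}\psi(\varphi(\lambda))=\psi(\varphi(0+))$ exists — equal to $\psi(0+)$ when $\varphi(0+)=0$ and to the finite value $\psi(a)$ when $\varphi(0+)=a>0$. Hence $\psi\circ\varphi\in\mathcal{CBF}$ by (i).
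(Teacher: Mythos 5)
Your proof is correct, but it is worth pointing out that the paper does not prove Theorem \ref{cbfprop} at all: it quotes the result from \cite[Theorem 6.2 and Corollary 7.6]{SchilSonVon2010}. What you have written is essentially a self-contained reconstruction of the standard argument behind those citations, and it holds up: the necessity half of (i) does follow by inspection of \eqref{Cbf} (your domination $\frac{\lambda}{\lambda+s}\le\frac{1}{1+s}$ for $\lambda\le 1$ together with \eqref{mu} is exactly what gives $\psi(0+)=a$); the sufficiency half correctly combines the open mapping theorem, the Nevanlinna--Herglotz representation \eqref{herg}, Stieltjes inversion to confine the representing measure to $(-\infty,0]$, and your algebraic identity $\frac{1+t\lambda}{t-\lambda}=\frac{1+t^2}{-t}\cdot\frac{\lambda}{\lambda-t}+\frac1t$ (which I checked) to reach \eqref{Cbf}, with the comparison $\psi(\lambda)-\psi(1)=(\lambda-1)\bigl(a_0+\int\frac{(1+t^2)\,\rho(dt)}{(t-\lambda)(t-1)}\bigr)$ correctly ruling out an atom at $0$ and forcing $\int_{(-1,0)}|t|^{-1}\rho(dt)<\infty$, hence \eqref{mu} and $a=\psi(0+)\ge 0$. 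Deducing (ii) from (i) as you do — additivity directly from \eqref{Cbf}, and for the composition checking that a non-constant $\varphi$ maps $\C\setminus(-\infty,0]$ into itself and that the three hypotheses of (i) pass through — is also the standard route. Two small remarks: in the degenerate case $\varphi\equiv 0$ the composition $\psi\circ\varphi$ is only defined via the convention $\psi\circ\varphi\equiv\psi(0+)$, so it deserves an explicit word rather than ``trivial''; and your own contradiction argument in fact shows more than you claim, namely that once $\psi$ is holomorphic on $\C\setminus(-\infty,0]$, maps $\C^+$ into $\overline{\C^+}$ and is nonnegative on $(0,\infty)$, the representation with $\supp\rho\subset(-\infty,0]$ makes $\psi$ nondecreasing on $(0,\infty)$, so the finite limit $\psi(0+)$ exists automatically — i.e.\ the third hypothesis in (i) is redundant, which is a harmless but notable byproduct of your approach.
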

The following result allows one to bound the imaginary part of a complete Bernstein function by means of
its derivative on the positive half-axis.
\begin{lemma}\label{phi}
Let $\psi\in \mathcal{CBF}$. Then for all  $\beta\in(-\pi,\pi)$ and
$t>0$,
\begin{equation}\label{R101x}
{\rm Im}\,\psi(te^{i\beta})\le 2t\tan(\beta/2)\,\psi'(t).
\end{equation}
\end{lemma}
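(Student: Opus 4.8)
The plan is to start from the Stieltjes representation \eqref{Cbf} of $\psi\in\mathcal{CBF}$, namely
\[
\psi(\lambda)=a+b\lambda+\int_{(0,\infty)}\frac{\lambda\,\mu(ds)}{\lambda+s},
\]
and compute the imaginary part of $\psi(te^{i\beta})$ term by term. The constant $a$ contributes nothing to the imaginary part. The term $b\lambda$ contributes $bt\sin\beta$. For the integral term, writing $\lambda=te^{i\beta}$ and $s>0$, one has
\[
\im\frac{\lambda}{\lambda+s}=\im\frac{\lambda(\bar\lambda+s)}{|\lambda+s|^2}=\frac{s\,\im\lambda}{|\lambda+s|^2}=\frac{st\sin\beta}{t^2+2ts\cos\beta+s^2}.
\]
So the whole job reduces to two scalar inequalities: bounding $bt\sin\beta$ and bounding the integrand $\dfrac{st\sin\beta}{t^2+2ts\cos\beta+s^2}$, both against the corresponding pieces of $2t\tan(\beta/2)\psi'(t)$. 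Since $\psi'(t)=b+\int_{(0,\infty)}\dfrac{s\,\mu(ds)}{(t+s)^2}$, the target splits as $2t\tan(\beta/2)\,b+2t\tan(\beta/2)\int_{(0,\infty)}\dfrac{s\,\mu(ds)}{(t+s)^2}$, matching the two pieces of the representation.

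Next I would reduce to the case $\beta\in(0,\pi)$ (for $\beta\in(-\pi,0]$ both sides are $\le 0$ after the obvious sign check, or one uses $\psi(\bar\lambda)=\overline{\psi(\lambda)}$). For the $b$-term the needed inequality is $bt\sin\beta\le 2bt\tan(\beta/2)$, i.e. $\sin\beta\le 2\tan(\beta/2)$; this follows from the identity $\sin\beta=2\sin(\beta/2)\cos(\beta/2)$ and $\tan(\beta/2)=\sin(\beta/2)/\cos(\beta/2)$, since then $2\tan(\beta/2)-\sin\beta=2\sin(\beta/2)\bigl(\tfrac{1}{\cos(\beta/2)}-\cos(\beta/2)\bigr)\ge 0$ on $(0,\pi)$. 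For the integral term I need the pointwise inequality
\[
\frac{st\sin\beta}{t^2+2ts\cos\beta+s^2}\le \frac{2t^2 s\tan(\beta/2)}{(t+s)^2},\qquad t,s>0,\ \beta\in(0,\pi),
\]
which after clearing denominators and using $\sin\beta=2\sin(\beta/2)\cos(\beta/2)$, $\cos\beta=1-2\sin^2(\beta/2)$ becomes a polynomial inequality in $t,s$ with coefficients depending on $\beta$; I expect it to collapse to something like $(t-s)^2\ge 0$ times a nonnegative factor, or to $\cos(\beta/2)(t+s)^2\le t^2+2ts\cos\beta+s^2$ after dividing through, which rearranges to $(1-\cos(\beta/2))(t^2+s^2)+2ts(\cos\beta-\cos(\beta/2))\ge$ something — this is the one algebraic step that needs care.

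Once both scalar inequalities are in hand, I integrate the second one against $\mu(ds)$ (legitimate since the left side is dominated by an integrable function — indeed $\psi(te^{i\beta})$ is finite, the integral in \eqref{Cbf} converging absolutely by \eqref{mu} for any fixed $\lambda\in\mathbb C\setminus(-\infty,0]$), add the $b$-contribution, and obtain \eqref{R101x} directly. The main obstacle I anticipate is purely the elementary trigonometric/algebraic verification of the integrand inequality: getting the right factorization so that the inequality is manifestly true for all $t,s>0$ and all $\beta\in(0,\pi)$, rather than just for $\beta$ small. A clean way to organize it is to substitute $u=s/t$ (homogeneity in $(t,s)$) and reduce to showing $\dfrac{u\sin\beta}{1+2u\cos\beta+u^2}\le\dfrac{2u\tan(\beta/2)}{(1+u)^2}$ for $u>0$, i.e. $\sin\beta\,(1+u)^2\le 2\tan(\beta/2)\,(1+2u\cos\beta+u^2)$; dividing by $2\cos(\beta/2)/\cos(\beta/2)=2\tan(\beta/2)/\sin\beta$ is not quite right, so instead divide both sides by $2\sin(\beta/2)$ and use $\sin\beta/(2\sin(\beta/2))=\cos(\beta/2)$ and $\tan(\beta/2)/\sin(\beta/2)=1/\cos(\beta/2)$ to get $\cos(\beta/2)(1+u)^2\le \dfrac{1}{\cos(\beta/2)}(1+2u\cos\beta+u^2)$, i.e. $\cos^2(\beta/2)(1+u)^2\le 1+2u\cos\beta+u^2$; since $\cos\beta=2\cos^2(\beta/2)-1$ this is $\cos^2(\beta/2)(1+2u+u^2)\le 1+u^2+u(4\cos^2(\beta/2)-2)=1+u^2+2u(2\cos^2(\beta/2)-1)$, which rearranges to $0\le (1-\cos^2(\beta/2))(1-2u+u^2)=\sin^2(\beta/2)(1-u)^2$ — true. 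That last line is the heart of the argument and settles it.
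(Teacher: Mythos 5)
Your core argument is the same as the paper's: both start from the Stieltjes representation \eqref{Cbf}, take imaginary parts termwise, use $\psi'(t)=b+\int_{(0,\infty)}s\,\mu(ds)/(t+s)^2$, and reduce everything to the pointwise bound $\cos^2(\beta/2)(t+s)^2\le |te^{i\beta}+s|^2$ --- your homogenized inequality $\cos^2(\beta/2)(1+u)^2\le 1+2u\cos\beta+u^2$, equivalent to $\sin^2(\beta/2)(1-u)^2\ge 0$, is exactly this with $u=s/t$. The only cosmetic difference is that the paper inserts the bound inside the integral and factors out $\sin\beta/\cos^2(\beta/2)=2\tan(\beta/2)$ in one stroke, whereas you split off the $b$-term ($\sin\beta\le 2\tan(\beta/2)$) and treat the integrand separately. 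For $\beta\in[0,\pi)$ your proof is correct and essentially identical to the paper's.

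The one defective point is your dismissal of $\beta\in(-\pi,0]$. Saying ``both sides are $\le 0$'' proves nothing about the inequality between them, and the symmetry $\psi(\bar\lambda)=\overline{\psi(\lambda)}$ converts \eqref{R101x} for negative $\beta$ into the \emph{reversed} inequality ${\rm Im}\,\psi(te^{i|\beta|})\ge 2t\tan(|\beta|/2)\,\psi'(t)$, which is generally false. Indeed \eqref{R101x} itself fails for $\beta<0$: for $\psi(\lambda)=\lambda/(\lambda+1)$, $t=2$, $\beta=-\pi/2$ one has ${\rm Im}\,\psi(-2i)=-2/5$ while $2t\tan(\beta/2)\psi'(t)=-4/9<-2/5$. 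To be fair, the paper's own proof carries the same tacit restriction (its chain of estimates multiplies through by $t\sin\beta$, hence needs $\sin\beta\ge 0$), and what the paper actually uses afterwards, in checking $\mathcal{CBF}\subset \mathcal{D}_{\theta}(0,a)$, is the bound $|{\rm Im}\,\psi(te^{i\beta})|\le 2t\tan(|\beta|/2)\,\psi'(t)$, which does follow from the case $\beta\in[0,\pi)$ together with the reflection symmetry. So either restrict your argument to $\beta\in[0,\pi)$ and deduce the absolute-value version by symmetry, or state the lemma in that corrected form; with that repair your proof is complete.
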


\begin{proof}
Let $\psi$ be of  the form (\ref{Cbf}) and let $\lambda=te^{i\beta}.$ Then observing that
\[
\psi'(\lambda)=b +\int_{(0,\infty)}
\frac{s\,\mu(ds)}{(\lambda+s)^2},
\]
and using the inequality
\[
|te^{i\beta}+s|^2 \ge \cos^2(\beta/2) (t+s)^2,\qquad \beta\in
(-\pi,\pi),
\]
we obtain  that
\begin{align*}
{\rm Im}\,\psi(te^{i\beta})= &bt\sin\beta+\int_{(0,\infty)} {\rm
Im}\,\frac{t e^{i\beta}}{te^{i\beta}+s}\,\mu(ds)\\
=&t\sin\beta \left(b+ \int_{(0,\infty)}\frac{s\,\mu(ds)}{|t
e^{i\beta}+s|^2}\right) \\
\le& t\sin\beta \left(b+
\frac{1}{\cos^2(\beta/2)}\int_{(0,\infty)}\frac{s\,\mu(ds)}{(t+s)^2}\right)\\
\le & \frac{t\sin\beta}{\cos^2(\beta/2)} \left(b+ \int_{(0,\infty)}\frac{s\,\mu(ds)}{(t+s)^2}\right)\\
=&2t\tan(\beta/2)\psi'(t).
\end{align*}
\end{proof}

Now we introduce a technical condition which will be basic for
estimates in subsequent sections. It is a local version of
\eqref{R101x}.
\begin{defn}\label{classd}
Let $a > 0$ and $\theta \in (0, \pi).$ We let $\mathcal{D}_{\theta}(0,a)$
be the space of holomorphic
functions $f$  on $\Sigma_\theta$ such that
\begin{itemize}
\item [a)] $f$ is real on $(0,\infty)$ and strictly increasing on $(0,a);$
\item [b)] for every $R>0$ and every $\beta \in
(-\theta,\theta)$ there exist $b=b(\beta,R)\in (0,\min(1,a/R))$
and $m=m(\beta)$ such that
\begin{equation}\label{AddCon}
|{\rm Im}\,f(te^{i\beta})|\le m t f'(b t)
\end{equation}
for all $t \in (0,R).$
\end{itemize}
\end{defn}

Observe that  by Lemma \ref{phi}, $\mathcal{CBF} \subset
\mathcal{D}_{\theta}(0,a)$ for all $a >0$ and $\theta \in
(0,\pi).$ The next lemma, proved in Appendix A, shows that some
functions from $\mathcal{NP_+}$ belong to
$\mathcal{D}_{\pi/2}(0,1).$
\begin{lemma}\label{CorD}
Suppose that
\[
\sum_{n=0}^\infty c_n=1, \qquad c_n\ge 0, \,\, n\ge 0,
\]
and let
\[
\mathbf{h}(\lambda):=1-\sum_{n=0}^\infty c_n \left(\frac{1-\lambda}{1+\lambda}\right)^n,\qquad \lambda\in \C_+.
\]
Then $\mathbf{h} \in \mathcal{D}_{\pi/2}(0,1)\cap \mathcal{NP_+}.$ Moreover, the corresponding constants $b=b(\beta,R)$ and $m=m(\beta)$ from Definition
\ref{classd} are given by
$$
b= \frac{\cos \beta}{1+R^2} \qquad \text{and} \qquad m=\frac{\pi}{2}.
$$
\end{lemma}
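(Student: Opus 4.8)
The plan is to decompose $\mathbf{h}$ into the ``building blocks'' $\mathbf{h}_n(\lambda):=1-\varphi(\lambda)^n$, where $\varphi(\lambda):=(1-\lambda)/(1+\lambda)$ is the Cayley map of $\C_+$ onto $\D$; since $\sum_{n\ge0}c_n=1$, one has $\mathbf{h}=\sum_{n\ge1}c_n\mathbf{h}_n$ on $\C_+$ (the $n=0$ term vanishes because $\mathbf{h}_0\equiv0$). As $|\varphi(\lambda)|<1$ on $\C_+$, the bound $|c_n\mathbf{h}_n(\lambda)|\le2c_n$ gives uniform convergence of this series on $\C_+$, so $\mathbf{h}$ is holomorphic there and, by the Weierstrass theorem, $\mathbf{h}'=\sum_{n\ge1}c_n\mathbf{h}_n'$ locally uniformly, with $\mathbf{h}_n'(\lambda)=2n\varphi(\lambda)^{n-1}/(1+\lambda)^2$. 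Each $\mathbf{h}_n$ lies in $\mathcal{NP_+}$: it is holomorphic on $\C_+$, it maps $\C_+$ into the disc $\{|z-1|<1\}\subset\C_+$ since $|\varphi(\lambda)^n|<1$, and it maps $(0,\infty)$ into $(0,2)$ since $\varphi(s)\in(-1,1)$ for $s>0$; as $\mathcal{NP_+}$ is a convex cone closed under locally uniform limits, $\mathbf{h}\in\mathcal{NP_+}$. For part a) of Definition \ref{classd} one notes $\mathbf{h}'(s)=\sum_{n\ge1}c_n\,2n\,\varphi(s)^{n-1}/(1+s)^2>0$ for $s\in(0,1)$ as soon as some $c_n>0$ with $n\ge1$ (the degenerate case $c_0=1$ gives $\mathbf{h}\equiv0$ and may be discarded), so $\mathbf{h}$ is strictly increasing on $(0,1)$.

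The heart of the matter is part b). Fix $R>0$ and $\beta\in(-\pi/2,\pi/2)$, and set $b:=\cos\beta/(1+R^2)$; from $\cos\beta\le1\le1+R^2$ and $R\cos\beta\le R\le1+R^2$ one gets $b\in(0,\min(1,1/R))$. The key elementary observation is that for $t\in(0,R)$ and $|\tau|\le|\beta|$,
\[
|\varphi(te^{i\tau})|\le\varphi(bt)=\frac{1-bt}{1+bt}\qquad\text{and}\qquad|1+te^{i\tau}|^2\ge(1+bt)^2.
\]
Both rest on the identities $|1\mp te^{i\tau}|^2=1\mp2t\cos\tau+t^2$ and on $\cos\tau\ge\cos\beta=b(1+R^2)$: after clearing denominators the first inequality becomes $\cos\tau(1+b^2t^2)\ge b(1+t^2)$, which follows from $\cos\tau(1+b^2t^2)\ge\cos\tau\ge b(1+R^2)\ge b(1+t^2)$; the second reduces to $2t\cos\tau\ge2bt$ and $t^2\ge b^2t^2$, both clear from $\cos\tau\ge b$ and $b\le1$.

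Finally I would express the imaginary part as an integral over the argument. Integrating $\tau\mapsto\varphi(te^{i\tau})^n$ from $0$ to $\beta$ and using that $\mathbf{h}_n(t)=1-\varphi(t)^n$ is real,
\[
\im\,\mathbf{h}_n(te^{i\beta})=-\im\int_0^{\beta}n\,\varphi(te^{i\tau})^{n-1}\,\varphi'(te^{i\tau})\,i\,t\,e^{i\tau}\,d\tau;
\]
hence, bounding the integrand by its modulus and invoking $|\varphi'(te^{i\tau})|=2/|1+te^{i\tau}|^2\le2/(1+bt)^2$ together with $|\varphi(te^{i\tau})|\le\varphi(bt)$ (the integrand's modulus being even in $\tau$),
\[
|\im\,\mathbf{h}_n(te^{i\beta})|\le\int_0^{|\beta|}\frac{2n\,\varphi(bt)^{n-1}}{(1+bt)^2}\,t\,d\tau=|\beta|\,t\,\mathbf{h}_n'(bt)\le\frac{\pi}{2}\,t\,\mathbf{h}_n'(bt),\qquad t\in(0,R).
\]
Multiplying by $c_n\ge0$ and summing over $n$ (legitimate since $\mathbf{h}_n'(bt)\ge0$ and $\mathbf{h}'(bt)=\sum_{n\ge1}c_n\mathbf{h}_n'(bt)$) yields $|\im\,\mathbf{h}(te^{i\beta})|\le(\pi/2)\,t\,\mathbf{h}'(bt)$ for $t\in(0,R)$, which is exactly \eqref{AddCon} with $b=\cos\beta/(1+R^2)$ and $m=\pi/2$. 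I expect the main obstacle to be finding and verifying the correct value of $b$: the entire argument hinges on the comparison $|\varphi(te^{i\tau})|\le\varphi(bt)$ holding uniformly in $n$, in $t\in(0,R)$, and in the integration variable $\tau\in[0,|\beta|]$; once $b=\cos\beta/(1+R^2)$ is pinned down, the rest is bookkeeping.
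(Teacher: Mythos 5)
Your proof is correct and follows essentially the same route as the paper's: the same choice $b=\cos\beta/(1+R^2)$, the same two elementary inequalities $|\varphi(te^{i\tau})|\le(1-bt)/(1+bt)$ and $|1+te^{i\tau}|^{2}\ge(1+bt)^{2}$ (the paper's Lemma on the Cayley quotient), and the same device of writing ${\rm Im}\,\mathbf{h}_n(te^{i\beta})$ as an integral over the argument, bounded termwise and then summed in $n$. The only cosmetic differences are that the paper integrates over $[-\beta,\beta]$ (so its factor $2\beta$, like your $|\beta|$, is absorbed into $\pi/2$) and checks $\mathbf{h}\in\mathcal{NP_+}$ by estimating ${\rm Re}\,\mathbf{h}$ directly rather than by convexity and closedness of the class.
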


Finally, we will also need the next geometric proposition proved in \cite{GT}.
\begin{prop}\label{pr7.6} \cite[Proposition 3.6]{GT}.
Assume that for $\psi\in \mathcal{CBF}$ there exists $\omega\in (0,\pi/2)$
such that
\begin{equation}\label{ConM0101}
\psi(\C_{+})\subset\overline{\Sigma}_\omega.
\end{equation}
Define $\omega_0 \in (\pi/2,\pi)$ by
\[
|\cos\omega_0|=\frac{\cot \omega}{\cot \omega + 1},
\]
and for $\theta\in (\pi/2,\omega_0)$ define  $\theta_0 \in (0,\pi/2)$ as
\begin{equation}\label{7.14}
\cot\theta_0=\frac{\cot \omega -(\cot\omega + 1)|\cos\theta|}{\sin\theta}.
\end{equation}
Then
\begin{equation}\label{7.15}
\psi(\overline{\Sigma_\theta})\subset \overline{\Sigma}_{\theta_0}.
\end{equation}
\end{prop}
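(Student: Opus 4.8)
The plan is to argue directly from the Stieltjes representation \eqref{Cbf} of $\psi$, bounding the imaginary part of $\psi$ on a ray by means of Lemma~\ref{phi} and bounding the real part from below in a way that uses the hypothesis \eqref{ConM0101}.

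First I would normalise the representation. Writing $\psi(\lambda)=a+b\lambda+\int_{(0,\infty)}\frac{\lambda}{\lambda+s}\,\mu(ds)$ as in \eqref{Cbf}, one has $\psi(iy)/(iy)\to b$ as $y\to+\infty$ by dominated convergence; since $\arg\bigl(\psi(iy)/(iy)\bigr)=\arg\psi(iy)-\pi/2\le\omega-\pi/2<0$ by \eqref{ConM0101} (extended to $i\R$ by continuity of $\psi$), while $b\ge0$ is real, necessarily $b=0$, so $\psi(\lambda)=a+\int_{(0,\infty)}\frac{\lambda}{\lambda+s}\,\mu(ds)$. Using $\psi(\bar\lambda)=\overline{\psi(\lambda)}$ and $\psi(\overline{\C_{+}})\subset\overline{\Sigma}_\omega\subset\overline{\Sigma}_{\theta_0}$, it then suffices to fix $\beta\in(\pi/2,\theta]$ and show $\re\psi(te^{i\beta})>0$ and $\im\psi(te^{i\beta})\le(\tan\theta_0)\,\re\psi(te^{i\beta})$ for all $t>0$, which is exactly $\psi(te^{i\beta})\in\overline{\Sigma}_{\theta_0}$; and since the map $\beta\mapsto\theta_0$ defined by \eqref{7.14} is increasing, it is enough to treat $\beta=\theta$, the intermediate rays then landing in a smaller sector. (Equivalently, one could observe that $\arg\psi$ is harmonic and bounded on $\Sigma_\theta$, so that the bound on the ray $\arg\lambda=\theta$ propagates inward by the maximum principle.)

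Next, Lemma~\ref{phi} gives $\im\psi(te^{i\theta})\le 2t\tan(\theta/2)\,\psi'(t)$. For the real part I would switch to the variable $x:=t/(t+s)\in(0,1)$, in which a direct computation gives $\re\frac{te^{i\theta}}{te^{i\theta}+s}=\frac{x(1-2\sigma(1-x))}{1-4\sigma x(1-x)}$ with $\sigma:=\sin^2(\theta/2)\in(1/2,1)$ and $1-4\sigma x(1-x)>0$, while $\psi(t)=a+\int x\,\mu(ds)$, $t\psi'(t)=\int x(1-x)\,\mu(ds)$, and \eqref{ConM0101} read at $\lambda=it$ becomes $\int\frac{x(1-x)}{x^2+(1-x)^2}\,\mu(ds)\le\tan\omega\bigl(a+\int\frac{x^2}{x^2+(1-x)^2}\,\mu(ds)\bigr)$. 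The target estimate $\im\psi(te^{i\theta})\le(\tan\theta_0)\,\re\psi(te^{i\theta})$ is then equivalent to
\[
a\tan\theta_0+\int_{(0,\infty)}\frac{x\bigl[\tan\theta_0-(1-x)(2\sigma\tan\theta_0+\sin\theta)\bigr]}{1-4\sigma x(1-x)}\,\mu(ds)\ \ge\ 0 .
\]
Writing $N(x)$ for the integrand above and $M(x)$ for the integrand of the hypothesis inequality --- both positive near $x=1$ and negative near $x=0$ --- the plan is to establish a pointwise bound $N(x)\ge c\,M(x)$ on $(0,1)$ for a suitable $c=c(\theta,\omega)>0$ with $c\tan\omega\le\tan\theta_0$, and then to conclude via the identity $a\tan\theta_0=c\,a\tan\omega+a(\tan\theta_0-c\tan\omega)$, adding $c$ times the hypothesis inequality to $\int(N-cM)\,\mu(ds)\ge0$. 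The constants in \eqref{7.14} are precisely those for which this is possible: $\tan\theta_0=\sin\theta/(\cot\omega-(\cot\omega+1)|\cos\theta|)$ is the best angle reachable in this way, and $\theta<\omega_0$ is exactly the condition guaranteeing that this denominator --- hence also the lower bound for $\re\psi(te^{i\theta})$ --- stays positive.

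The step I expect to be the main obstacle is this last, purely one-variable, computation: exhibiting the optimal factor $c$ and verifying the elementary but delicate inequality $N(x)\ge c\,M(x)$ on $(0,1)$ together with $c\tan\omega\le\tan\theta_0$, so that all the dependence on $\theta$, on the half-angle $\theta/2$ (coming from Lemma~\ref{phi}), and on $\omega$ collapses into the closed form \eqref{7.14}. An alternative that looks cleaner but I do not think is genuinely shorter is to replace $\psi$ by $F:=\psi^{\pi/(2\omega)}$, which lies in $\mathcal{NP_+}$ by \eqref{ConM0101} and extends past $\C_{+}$ since $\psi$ does, and to run the argument through the representation \eqref{A} and Theorem~\ref{NP}; but \eqref{ConM0101} contributes nothing about $F$ beyond $F\in\mathcal{NP_+}$, so one would still need to invoke $b=0$ (equivalently, that $F$ stays bounded near $0$, excluding a $1/\lambda$ term) and carry out essentially the same estimates.
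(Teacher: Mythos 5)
Your reduction is sound as far as it goes: $b=0$ does follow from \eqref{ConM0101}; by symmetry and the (easily checked) monotonicity of $\beta\mapsto\theta_0(\beta)$ it suffices to prove $\mathrm{Im}\,\psi(te^{i\theta})\le\tan\theta_0\,\mathrm{Re}\,\psi(te^{i\theta})$ on the single ray $\arg\lambda=\theta$, the rays with $|\arg\lambda|\le\pi/2$ being covered by \eqref{ConM0101} together with the one-line check $\omega\le\theta_0$; and your change of variables $x=t/(t+s)$ and the displayed reformulation are correct. (Note that your display actually uses the exact imaginary part of each integrand, not the bound of Lemma~\ref{phi}; this is just as well, since $\sin\theta/(1+\cos\theta)=\tan(\theta/2)$ shows the sharp comparison constant is $\tan(\theta/2)$, not the $2\tan(\theta/2)$ of Lemma~\ref{phi}, and the cruder bound would not obviously reach the optimal $\theta_0$.) The genuine gap is that the decisive step is missing: you neither exhibit the constant $c$ nor prove the pointwise inequality $N(x)\ge c\,M(x)$, and all dependence on \eqref{7.14} is concentrated in exactly that inequality, so as written the argument is a scheme rather than a proof.

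The gap does close, and with the forced choice $c=\tan\theta_0/\tan\omega$ (so $c\tan\omega\le\tan\theta_0$ holds with equality), by estimates much simpler than you anticipate. Put $\kappa:=|\cos\theta|$ and $E(x):=x^2+(1-x)^2$; since $2\sigma=1+\kappa$, one has $(1-\kappa)E(x)\le 1-4\sigma x(1-x)=E(x)-2\kappa x(1-x)\le E(x)$, whence
\begin{align*}
\frac{x^2-\kappa x(1-x)}{E(x)-2\kappa x(1-x)}&\ \ge\ \frac{x^2}{E(x)}-\frac{\kappa}{1-\kappa}\cdot\frac{x(1-x)}{E(x)},\\
\frac{\sin\theta\; x(1-x)}{E(x)-2\kappa x(1-x)}&\ \le\ \frac{\sin\theta}{1-\kappa}\cdot\frac{x(1-x)}{E(x)},
\end{align*}
and therefore
\[
N(x)\ \ge\ \tan\theta_0\,\frac{x^2}{E(x)}-\frac{\kappa\tan\theta_0+\sin\theta}{1-\kappa}\cdot\frac{x(1-x)}{E(x)}\ =\ c\,M(x),
\]
because \eqref{7.14} is equivalent to $\tan\theta_0(1-\kappa)=\tan\omega\,(\kappa\tan\theta_0+\sin\theta)$, i.e.\ to $c=\tan\theta_0/\tan\omega=(\kappa\tan\theta_0+\sin\theta)/(1-\kappa)$; the restriction $\theta<\omega_0$ is exactly what keeps these quantities positive. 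This two-line comparison of $\mathrm{Re}$ and $\mathrm{Im}$ of $\lambda/(\lambda+s)$ on the ray $\arg\lambda=\theta$ with their values at $\lambda=it$, followed by a single application of \eqref{ConM0101} at $it$, is (after dividing by $E(x)$) precisely the proof of \cite[Proposition 3.6]{GT}; the present paper does not reprove the proposition but imports it from there. So your route is the right one, but the one-variable inequality you defer as ``the main obstacle'' is the entire content of the statement and has to be carried out.
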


\subsection{$\Wip-$ and Hausdorff functions}\label{wiphaus}

Let $\Wip(\D)$ be the algebra of holomorphic functions $f$ on
the unit disc $\D$
that have absolutely summable Taylor coefficients:
\[ \Wip(\D) := \left \{ f(\lambda)=\sum_{n=0}^\infty c_n \lambda^n, \,\, \lambda \in \D: \,\, \sum_{n=0}^\infty |c_n| < \infty \right \}.
\]
Clearly, if $f\in \Wip(\D)$ then $f$ is continuous on $\cls{\D}.$
Setting
\[ \norm{f}_{\Wip(\D)} : = \sum_{n=0}^\infty
\abs{c_n}\quad \quad \text{if} \quad f(\lambda) = \sum_{n=0}^\infty c_n \lambda^n,
\]
one infers that $(\Wip(\D), \norm{f}_{\Wip}) $ is a unital commutative Banach
algebra with respect to pointwise multiplication.

Consider now functions $h$ given by
\begin{equation}\label{HaF1}
h(\lambda)=c_0+\sum_{n=1}^\infty c_n\lambda^n, \qquad \text{where}\,\, c_0 \ge
0, \quad c_n:=\int_{[0,1)} t^{n-1} \, \nu(dt), \quad n \ge 1,
\end{equation}
and $\nu$ is a bounded positive Borel measure on $[0,1)$ such that
\begin{equation}\label{proper0}
c_0+\int_{[0,1)}\frac{\nu(dt)}{1-t}=1.
\end{equation}
For the purposes of the present paper, the functions $h$
satisfying \eqref{HaF1} and \eqref{proper0} will be called {\it
regular Hausdorff functions} (since the moment sequences $(c_n)$
are often called Hausdorff sequences). We will write $h \sim (c_0,
\nu)$ and say that the measure $\nu$ is the (Hausdorff) {\it
representing measure} for $h$.

Observe that if $h$ is defined by  \eqref{HaF1} and \eqref{proper0} then
\begin{equation}
h(\lambda)=c_0+\int_{[0,1)} t^{-1}\left(\sum_{n=1}^\infty (t\lambda)^n \right)
\nu(dt)=c_0+\int_{[0,1)}\frac{\lambda\nu(dt)}{1-t\lambda},\qquad \lambda \in \D,
\label{fdis1}
\end{equation}
and moreover $h$ extends analytically to  $\lambda\in \C\setminus[1,\infty).$
By \eqref{proper0} and Fatou's lemma,
\begin{equation}\label{disc}
\|h\|_{\Wip(\D)}=h(1)=\sum_{n=0}^\infty c_n=1.
\end{equation}
The next proposition relates the regular Hausdorff functions to
complete Bernstein functions thus connecting the discrete and the
continuous settings.

\begin{prop}\label{prop1}
Let $h\sim (c_0,\nu)$ be a regular Hausdorff function, and let
\begin{equation}\label{dd}
\psi(\lambda):=1-h(1-\lambda), \qquad \lambda \in \D.
\end{equation}
Then $\psi$ extends to a complete Bernstein function of the form $(0,b,\mu),$
where
\[b=\nu(\{0\}), \qquad \mu(dt)=\frac{\nu (ds) - b \delta_0(ds)}{s(1-s)}\qquad (t=(1-s)/s),
\]
and $\delta_0$ denotes the Dirac measure at $0.$

Conversely,
suppose $\psi\in \mathcal{CBF}$ is such that $\psi\sim (0,0,\mu)$.
Then
\[
h(\lambda):=\psi(1)-\psi(1-\lambda)
\]
is a regular Hausdorff function such that $h\sim(0,\nu)$, where
$$
\nu(ds)=\frac{t\mu(dt)}{(1+t)^2} \qquad  (s=1/(1+t)).
$$
\end{prop}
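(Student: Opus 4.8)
The plan is to establish the two directions by direct computation, using the integral representation \eqref{fdis1} for $h$ and the Stieltjes representation \eqref{Cbf} for $\psi$, and in each case checking that the measure produced is genuinely of the required type (finite Hausdorff representing measure on $[0,1)$ on one side, a L\'evy-type measure satisfying \eqref{mu} on the other) together with the normalization condition.

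For the first direction, I would start from $h(\lambda)=c_0+\int_{[0,1)}\frac{\lambda\,\nu(dt)}{1-t\lambda}$, which by the last sentence before the proposition is valid for $\lambda\in\C\setminus[1,\infty)$; substituting $1-\lambda$ for $\lambda$ (legitimate since $1-\lambda$ avoids $[1,\infty)$ exactly when $\lambda$ avoids $(-\infty,0]$) gives
\[
\psi(\lambda)=1-h(1-\lambda)=1-c_0-\int_{[0,1)}\frac{(1-\lambda)\,\nu(dt)}{1-t(1-\lambda)},\qquad \lambda\in\C\setminus(-\infty,0].
\]
Now I would split off the atom at $t=0$: the $t=0$ part of the integral contributes $1-c_0-\nu(\{0\})(1-\lambda)=(1-c_0-b)+b\lambda$, and by \eqref{proper0} the constant $1-c_0-\int_{[0,1)}(1-t)^{-1}\nu(dt)=0$, so after a short manipulation of the remaining integrand (writing $\frac{1-\lambda}{1-t(1-\lambda)}=\frac{1}{1-t}-\frac{\lambda}{(1-t)(1-t+t\lambda)}$ and again invoking \eqref{proper0} to cancel all $\lambda$-free terms) one arrives at $\psi(\lambda)=b\lambda+\int_{[0,1)\setminus\{0\}}\frac{\lambda\,\nu(dt)}{(1-t)(1-t+t\lambda)}$. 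The change of variables $t=(1-s)/s$, i.e. $s=1/(1+t)$ wait --- I should be careful here: in the statement the new variable is $t=(1-s)/s$ with $s$ ranging over $[0,1)$, so $1-t=\frac{2s-1}{s}$ is not what one wants; rather one substitutes so that $1-t+t\lambda$ becomes $s^{-1}(\lambda+t_{\mathrm{new}})$ up to a positive factor, giving the kernel $\frac{\lambda}{\lambda+t_{\mathrm{new}}}$ and the measure $\mu(dt)=\frac{\nu(ds)-b\,\delta_0(ds)}{s(1-s)}$ as claimed. Finally I would check \eqref{mu}, i.e. $\int\frac{\mu(dt)}{1+t}<\infty$: under the substitution $1+t=1/s$ this becomes $\int_{(0,1)}\frac{s\,\mu\text{-in-}s\text{ form}}{\cdots}$, which reduces to $\int_{[0,1)}\frac{\nu(ds)}{1-s}<\infty$, true by \eqref{proper0}; and $\psi(0+)=0$ follows since $b\cdot 0+0=0$, so by Theorem \ref{cbfprop}(i), $\psi\in\mathcal{CBF}$ with Stieltjes triple $(0,b,\mu)$.

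For the converse, given $\psi\sim(0,0,\mu)$ I would run essentially the same computation backwards: $h(\lambda)=\psi(1)-\psi(1-\lambda)=\int_{(0,\infty)}\left(\frac{1}{1+t}-\frac{1-\lambda}{1-\lambda+t}\right)\mu(dt)=\int_{(0,\infty)}\frac{\lambda\,t\,\mu(dt)}{(1+t)(1-\lambda+t)}$, and then substitute $s=1/(1+t)$ so that $1-\lambda+t=(1+t)(1-s\lambda)=s^{-1}(1-s\lambda)$, turning the integrand into $\frac{\lambda}{1-s\lambda}$ against the measure $\nu(ds)=\frac{t\,\mu(dt)}{(1+t)^2}$; comparing with \eqref{fdis1} this shows $h\sim(0,\nu)$ once I verify that $\nu$ is finite and that \eqref{proper0} holds with $c_0=0$. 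Finiteness of $\nu$ on $[0,1)$ and $\int_{[0,1)}\frac{\nu(ds)}{1-s}=\int_{(0,\infty)}\frac{t\,\mu(dt)}{(1+t)^2}\cdot(1+t)=\int_{(0,\infty)}\frac{t\,\mu(dt)}{1+t}$, which is finite (indeed equals $\psi(1)=1$ after the normalization, forcing \eqref{proper0}) precisely because $\mu$ satisfies \eqref{mu} and $t/(1+t)\le 1$; this also gives $h(1)=\psi(1)$, and one normalizes so that $\psi(1)=1$, or rather the statement implicitly takes $\psi$ with $\psi(1)=1$ so that $h$ is a \emph{regular} (probability-generating) Hausdorff function.

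The main obstacle is bookkeeping rather than conceptual: one must be scrupulous about (a) the domain of analytic continuation at each step (ensuring $1-\lambda\notin[1,\infty)$, $1-\lambda+t\neq 0$, etc., so that all integral manipulations and the substitutions are justified — Fubini/dominated convergence to interchange the integral with the algebraic rearrangement of the kernel uses exactly the finiteness built into \eqref{mu} and \eqref{proper0}), and (b) getting the atom at $s=0$ (equivalently $t=\infty$ in the $\psi$-picture) handled correctly, since that is what produces the linear term $b\lambda$ and why the representing measure $\mu$ in the forward direction is $\nu-b\delta_0$ pushed forward rather than $\nu$ itself. Once the substitution $s\leftrightarrow t$ with $s=1/(1+t)$ is pinned down, every identity is a one-line algebraic check and the normalization conditions match up automatically.
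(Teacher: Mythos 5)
Your argument is correct and follows essentially the same route as the paper: both directions are obtained by the same algebraic rearrangement of the representations \eqref{fdis1} and \eqref{Cbf} followed by the change of variables $s=1/(1+t)$, with the atom of $\nu$ at $0$ producing the linear term $b\lambda$, and with the converse implicitly normalized by $\psi(1)=1$, exactly as in the paper's proof. One small slip in your converse verification: since $1-s=t/(1+t)$, one has $\int_{[0,1)}\frac{\nu(ds)}{1-s}=\int_{(0,\infty)}\frac{\mu(dt)}{1+t}=\psi(1)$, not $\int_{(0,\infty)}\frac{t\,\mu(dt)}{1+t}$ (which need not even be finite under \eqref{mu}); with this factor corrected, your check of \eqref{proper0} is precisely the paper's identity \eqref{meas} and nothing else in the argument is affected.
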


\begin{proof}
Let $\nu(ds)=b\delta_0(ds)+\nu_0(ds)$, where $\nu_0(ds)$ is a Borel
measure on $(0,1)$. Taking into account \eqref{HaF1}, we have
\begin{align*}
\psi(\lambda)=&\int_{[0,1)} \frac{\nu(ds)}{1-s}-\int_{[0,1)}
\frac{(1-\lambda)\,\nu(ds)}{1-s+\lambda s} =\int_{[0,1)}
\frac{\lambda\,\nu(ds)}{(1-s+\lambda s)(1-s)}\\
=&b \lambda+\int_{(0,1)} \frac{\lambda\,\nu_0(ds)}{((1-s)/s+\lambda)s(1-s)}.
\end{align*}
So, passing to the push-forward measure $\mu(dt)$ of $\frac{\nu_0(ds)}{s(1-s)}$ under the map $t: (0,1)\to (0,\infty),
t(s)=(1-s)/s,$
we obtain
that
\[
\psi(\lambda)=b \lambda+\int_{(0,\infty)}  \frac{\lambda\,\mu(dt)}{t+\lambda}, \qquad
\mu(dt)=\frac{(t+1)^2\,\nu_0(ds)}{t},
\]
and
\begin{equation}\label{meas}
\int_{(0,\infty)}\frac{d\mu(t)}{1+t}=\int_{(0,1)}
\frac{\nu(ds)}{1-s} <\infty.
\end{equation}

If $\psi\in \mathcal{CBF}$ and $\psi\sim (0,0,\mu),$ then
\begin{align*}
 \psi(1)-\psi(1-\lambda)=&\int_{(0,\infty)} \frac{\mu(dt)}{1+t}
-\int_{(0,\infty)} \frac{(1-\lambda)\,\mu(dt)}{1-\lambda+t}\\
 =&\int_{(0,\infty)}
\frac{\lambda t\,\mu(dt)}{(1+t-\lambda)(1+t)}\\
 =&\int_{(0,\infty)}
\frac{\lambda t\,\mu(dt)}{(1-\lambda/(1+t))(1+t)^2}.
\end{align*}
Passing as above to the push-forward measure $\nu(ds)$ of $\frac{t\mu (dt)}{(1+t)^2}$ under the map $s:(0,\infty)\to (0,1), s(t)=1/(1+t),$ we
obtain that
\[
h(\lambda)=\psi(1)-\psi(1-\lambda)=\int_{(0,1)} \frac{\lambda\,\nu(ds)}{1-s\lambda},
\]
and \eqref{meas} holds.
\end{proof}

To illustrate the second statement in Proposition \ref{prop1} and
in view of further applications in Section \ref{improvesec}, let
us consider the next simple example.

\begin{example}\label{discex}
a) Let $\alpha \in (0,1)$ be fixed.  Recall  that (see \cite[p.
304]{SchilSonVon2010}) $\psi_\alpha(\lambda):=\lambda^{\alpha} \in
\mathcal{CBF}$ and
\[
\psi_\alpha(\lambda)=\frac{\sin(\pi\alpha)}{\pi} \int_0^\infty \frac{\lambda
ds}{(\lambda+s)s^{1-\alpha}}, \qquad \lambda \in \mathbb C\setminus
(-\infty,0].
\]
Thus, $\psi_\alpha(1)=1$ and  $\psi_\alpha \sim(0,0,\mu_\alpha)$,
where
\[
\mu_\alpha(ds)=\frac{\sin(\pi\alpha)}{\pi}
\frac{ds}{s^{1-\alpha}}.
\]
Then, by Proposition \ref{prop1}, $h_\alpha(\lambda):=1-(1-\lambda)^\alpha, \lambda
\in \D,$ is a regular Hausdorff function and $h_\alpha \sim
(0,\nu_\alpha),$ where
$$
\nu_\alpha (dt)=\frac{\sin(\pi\alpha)}{\pi}
\frac{\lambda(1-t)^\alpha\,dt}{(1-\lambda t)t^\alpha}.
$$

b) For $\psi(\lambda):=\frac{\lambda-1}{\log \lambda} \in \mathcal{CBF},$ we use the
representation (see \cite[p. 322]{SchilSonVon2010})
\[
\psi(\lambda)=\int_0^\infty \frac{\lambda\,(s+1)\,ds}{(\lambda+s)s(\log^2s+\pi^2)},
\qquad \lambda \in \mathbb C\setminus (-\infty,0],
\]
so that $\psi\sim (0,0,\mu)$, $\psi(1)=1$, where
$$ \mu (ds) = \frac{(s+1)ds}{s(\log^2s+\pi^2)}.$$
 If $
h(\lambda)=1-\psi(1-\lambda)=1+\lambda/\log(1-\lambda),$ $\lambda \in \D,$ then, by Proposition
\ref{prop1} we infer that $h$ is a regular Hausdorff function  and
$h\sim (0,\nu),$ where
\[
\nu(dt)=\frac{dt}{t(\log^2(1/t-1)+\pi^2)}.
\]
\end{example}

In Example \ref{discex}, a) and b) one may also write down the
Taylor coefficients for $h$  explicitly.

\section{Sectorial operators and Ritt operators}\label{secritt}

In this section we will introduce and discuss sectorial and Ritt
operators, the main objects of our studies. Moreover, we recall
and study the notion of Stolz domain. This is a geometric notion
related to Ritt operators and to some extent matching the notion
of sector for sectorial operators. Moreover, we prove several
geometric properties of the spectrum of Ritt operators.

Let us first recall that a closed, densely defined  operator $A$
is called sectorial with sectoriality angle $\alpha\in [0,\pi)$ if
$\sigma(A)\subset\overline{\Sigma}_\alpha,$ and for any $\omega\in
(\alpha,\pi)$ exists $M(A,\omega)<\infty$ such that
\[
\| z (z-A)^{-1}\|\le M(A,\omega), \qquad z\not\in
\overline{\Sigma}_\omega.
\]
The set of the sectorial operators with angle $\alpha\in [0,\pi)$
will be denoted by $\mbox{Sect}(\alpha)$. Note that $A\in
\Sect(\alpha)$ for some $\alpha\in [0,\pi)$ if and only
\begin{equation}\label{MQ}
M(A):=\sup_{z>0}\,\|z (z+A)^{-1}\|<\infty.
\end{equation}
Define also the minimal angle of sectoriality $\alpha(A)$ of a sectorial operator $A$ as
$$
\alpha(A):=\inf\{\alpha : A \in \mbox{Sect}(\alpha)\}.
$$
(Note that $\inf$ above can never be replaced by $\min.$)
In this paper, we will mostly be dealing with bounded sectorial operators,
although some operators will a priori be considered as unbounded ones.

As was explained in the introduction, the theory of Ritt operators is
well-developed by now and there are many papers treating various
aspects of such operators. Being unable to present all important
and relevant results, we thus restrict ourselves to discussing
only very basic aspects of that theory.

Let
us first recall that  $T\in \mathcal{L}(X)$ is said to be
\emph{Ritt} if $\sigma(T) \subset \overline \D$  and there exists
$C\ge 1$ such that
\begin{equation}\label{RRR0}
\|(z-T)^{-1}\|\le \frac{C}{|z-1|},\qquad z\in \C
\setminus \overline \D.
\end{equation}
Note that
if $C=1$ in \eqref{RRR0} then necessarily $\sigma(T)=\{1\},$ see
\cite[p. 154]{Lyubich}. There is a direct link between the notion
of Ritt operators and the notion of sectorial operators. Recall
that $T\in \mathcal{L}(X)$ is Ritt if and only if
$\sigma(T)\subset \D \cup\{1\}$ and there is $\omega \in
[0,\pi/2)$ such that the semigroup $(e^{-(1-T)z})_{z \in \C}$ is
sectorially bounded in $\Sigma_\omega,$ see e.g. \cite[Th.
1.5]{Dungey} and the comments preceding it.
This fact allows the following convenient reformulation which we separate for future references.
\begin{thm}\label{sectorritt}
An operator $T \in {\mathcal L}(X)$ is  Ritt if and only if there exists $\alpha \in [0,\pi/2)$
such that
\[
\sigma(T)\subset (\D\cup\{1\}) \cap \{z\in \C: 1-z
\in\overline{\Sigma}_{\alpha}\} \quad \text{and} \quad (1-T)\in
\Sect(\alpha).
\]
\end{thm}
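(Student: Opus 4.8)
The statement is essentially a bookkeeping reformulation of the characterization of Ritt operators already recalled just above it (namely, that $T$ is Ritt iff $\sigma(T)\subset\D\cup\{1\}$ and the semigroup $(e^{-(1-T)z})_{z}$ is sectorially bounded in some $\Sigma_\omega$), together with the elementary correspondence between sectorial boundedness of a semigroup $(e^{-zA})$ in a sector and sectoriality of its generator $A$. So the plan is to unwind both directions through the substitution $A:=1-T$.

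First, I would record the resolvent dictionary: $(z-T)^{-1}=-(( 1-z)-(1-T))^{-1}$, so $(z-T)^{-1}=-(\lambda-A)^{-1}$ with $\lambda=1-z$ and $A=1-T$. Hence $\sigma(T)\subset\D\cup\{1\}$ translates exactly into $\sigma(A)\subset\{1-z:\ z\in\D\cup\{1\}\}$, which is a disc of radius $1$ centered at $1$, together with the point $0$; in particular $\sigma(A)\setminus\{0\}\subset\{\lambda:\ |\lambda-1|<1\}$, which lies inside $\Sigma_{\pi/2}$. For the forward direction, assume $T$ is Ritt; by the quoted result there is $\omega\in[0,\pi/2)$ with $\sigma(T)\subset\D\cup\{1\}$ and $(e^{-zA})_{z\in\Sigma_\omega}$ uniformly bounded and holomorphic on $\Sigma_\omega$. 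The standard fact that a bounded holomorphic $C_0$-semigroup on a sector $\Sigma_\omega$ has generator $-A$ with $A\in\Sect(\pi/2-\omega)$ then gives $A=1-T\in\Sect(\alpha)$ with $\alpha=\pi/2-\omega<\pi/2$. Finally, since $A$ is bounded with $\sigma(A)\subset\overline{\D(1,1)}$, sectoriality of angle $\alpha$ forces $\sigma(A)\subset\overline{\Sigma}_\alpha$, and combined with $\sigma(A)\subset\overline{\D(1,1)}\cup\{0\}$ we get the asserted spectral inclusion $\sigma(T)\subset(\D\cup\{1\})\cap\{z:\ 1-z\in\overline{\Sigma}_\alpha\}$.

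Conversely, suppose such an $\alpha\in[0,\pi/2)$ exists with $1-T=A\in\Sect(\alpha)$ and $\sigma(A)\subset\overline{\Sigma}_\alpha\cap\overline{\D(1,1)}$. Since $A$ is bounded and sectorial of angle $<\pi/2$, $-A$ generates a bounded holomorphic $C_0$-semigroup, sectorially bounded on $\Sigma_\omega$ for every $\omega<\pi/2-\alpha$; this is again the standard semigroup/sectorial-generator correspondence. The spectral inclusion $\sigma(A)\subset\overline{\D(1,1)}$ gives $\sigma(T)=1-\sigma(A)\subset\overline{\D}$, and in fact $\sigma(T)\subset\D\cup\{1\}$ because the only point of $\overline{\D(1,1)}$ at distance $0$ from the "bad" boundary corresponds to $z=1$. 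Then the quoted equivalence (the "$\Leftarrow$" part of \cite[Th.~1.5]{Dungey}) yields that $T$ is Ritt. I would phrase both directions so that the angle bookkeeping $\alpha\leftrightarrow\pi/2-\omega$ is explicit.

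The only genuinely non-trivial ingredient is the semigroup–generator correspondence "bounded holomorphic semigroup on $\Sigma_\omega$ $\Leftrightarrow$ generator sectorial of angle $\pi/2-\omega$," but this is entirely classical and, moreover, already implicitly invoked in the sentence preceding the theorem, so in context it may simply be cited. Thus I expect no real obstacle; the main care needed is in checking the spectral inclusions match up exactly (that $\{1-z:\ z\in\D\cup\{1\}\}=(\overline{\D(1,1)}\setminus\partial\text{-issue})$ behaves as claimed and that boundedness of $A$ lets one intersect the sector condition with the spectrum without loss), and in keeping the two angle conventions consistent throughout.
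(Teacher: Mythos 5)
Your proposal is correct and takes essentially the same route as the paper, which offers no separate proof: it presents the theorem as an immediate reformulation of the recalled fact (Dungey's characterization of Ritt operators via $\sigma(T)\subset\D\cup\{1\}$ and sectorial boundedness of $(e^{-(1-T)z})$), combined with the classical correspondence between bounded holomorphic semigroups on $\Sigma_\omega$ and sectorial generators of angle $\pi/2-\omega$, exactly as you do. Two cosmetic remarks: the forward angle bookkeeping $\alpha=\pi/2-\omega<\pi/2$ tacitly needs $\omega>0$ (as is implicit in the cited fact), and in the converse you need not recover $\sigma(T)\subset\D\cup\{1\}$ from the closed disc $\overline{\{\lambda:|\lambda-1|\le1\}}$ -- that inclusion is literally part of the hypothesis, and your boundary argument for it would not work if only the closed disc were assumed.
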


Observe  that the last condition means that for any $\beta\in
(\alpha,\pi/2)$ there exists $C_\beta \ge 1$ such that
\begin{equation}\label{RRR}
\|(z-T)^{-1}\|\le \frac{C_\beta}{|z-1|},\qquad z \in \C\setminus
\left((1- \overline{\Sigma}_{\beta}) \cap \overline{\D} \right).
\end{equation}
Thus, if \eqref{RRR} holds, we will say that \emph{$T$ is a Ritt
operator of angle $\alpha.$}

Note that the Ritt condition \eqref{RRR0} has a number of
implications for the shape of the spectrum of $T.$ To formulate
them we need to define several concepts.

For $\sigma\ge 1$  define a \emph{Stolz domain} $S_\sigma$ by
\begin{equation}\label{San}
S_\sigma:=\{z\in \D:\,|1-z|/(1-|z|) < \sigma\}\cup\{1\},
\end{equation}
Clearly, $S_\sigma=\{1\}$ if $\sigma=1.$

To relate Stolz domains to angular sectors, observe that
\begin{equation}\label{stolsec}
 1-\overline{S}_\sigma \subset \overline{\Sigma}_\omega,\qquad \omega=\arccos(1/\sigma).
\end{equation}
Indeed, let $\sigma>1$ and $1\not=z=1-\rho e^{i\alpha}\in S_\sigma\subset \D$. Then
$\rho<\cos\alpha\le 1$ and
$\rho/(1-|1-\rho e^{i\alpha}|)< \sigma$,
or
\begin{equation}\label{StAa}
\sigma\,|1-\rho e^{i\alpha}|< \sigma-\rho.
\end{equation}
A direct calculation shows that
\begin{equation}\label{StAa1}
\rho < \frac{2\sigma}{\sigma^2-1}(\sigma\cos\alpha-1).
\end{equation}
Therefore, we have, in particular, that
\[
\cos\alpha> \frac{1}{\sigma}\qquad \mbox{and}\qquad
(1-S_\sigma)\setminus\{0\}\subset \Sigma_\omega.
\]
Remark that  $1-\overline{S}_\sigma$ is not a subset of
$\overline{\Sigma}_{\tilde{\omega}}$ for any
$\tilde{\omega}<\omega$.

The next result sharpens the definition of Ritt operators in terms
of Stolz domains.

\begin{prop}\label{rittchar}
Let $T$ be a Ritt operator satisfying \eqref{RRR0} for some $C \ge 1.$  Then
\begin{equation}\label{widmo}
\sigma(T)\subset \overline{S}_\sigma \quad \text{with} \,\, \sigma=C,
\end{equation}
and for any $\delta>\sigma$ there exists $C_\delta$ such that
\begin{equation}\label{dopE}
\|(1-z)(z-T)^{-1}\|\le C_\delta,\qquad z \in \C\setminus
{S}_\delta.
\end{equation}
Conversely, if \eqref{widmo} and \eqref{dopE} hold for some
$\sigma \ge 1,$ then $T$ is Ritt.
\end{prop}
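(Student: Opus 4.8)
The plan is to treat the forward implication in two independent parts --- the spectral inclusion \eqref{widmo} and the resolvent bound \eqref{dopE} --- and to dispose of the converse separately. The converse is almost immediate: since $S_\delta\subset\overline{\D}$ for every $\delta\ge 1$, restricting \eqref{dopE} to $z\in\C\setminus\overline{\D}$ gives $\|(z-T)^{-1}\|\le C_\delta/|z-1|$ there, and together with $\sigma(T)\subset\overline{S}_\sigma\subset\overline{\D}$ (from \eqref{widmo}) these are exactly the defining conditions \eqref{RRR0} of a Ritt operator with constant $C_\delta$.

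For \eqref{widmo} I would start from the elementary lower bound $\|(z-T)^{-1}\|\ge 1/\dist(z,\sigma(T))$, which combined with \eqref{RRR0} gives $\dist(z,\sigma(T))\ge|z-1|/C$ for all $|z|>1$; hence, for any fixed $\lambda\in\sigma(T)$, one has $C|z-\lambda|\ge|z-1|$ for all $|z|>1$ and, by continuity, for all $|z|\ge 1$. If $|\lambda|=1$, letting $z\to\lambda$ radially forces $\lambda=1\in\overline{S}_C$. If $|\lambda|<1$, minimizing $C^2|z-\lambda|^2-|z-1|^2$ over $z\in\partial\D$ yields $C^2(1+|\lambda|^2)-2\ge 2\,|C^2\lambda-1|$; squaring (the left-hand side being then automatically nonnegative) and simplifying, using $(1+|\lambda|^2)^2-4|\lambda|^2=(1-|\lambda|^2)^2$ and $1-2\re\lambda+|\lambda|^2=|1-\lambda|^2$, reduces this to $C(1-|\lambda|^2)\ge 2|1-\lambda|$, whence $|1-\lambda|\le\frac{C}{2}(1-|\lambda|)(1+|\lambda|)\le C(1-|\lambda|)$, i.e.\ $\lambda\in\overline{S}_C$. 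This gives \eqref{widmo} with $\sigma=C$.

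For \eqref{dopE}, fix $\delta>C$. By Theorem \ref{sectorritt} together with Lyubich's sharp angle estimate recalled above, $A:=1-T$ is sectorial of angle $\arccos(1/C)$; note also that $z-T=-(w-A)$ with $w:=1-z$, so $\|(1-z)(z-T)^{-1}\|=\|w(w-A)^{-1}\|$. Choose $\delta'\in(C,\delta)$, put $\beta:=\arccos(1/\delta')\in(\arccos(1/C),\pi/2)$, and let $M_\beta:=\sup_{w\notin\overline{\Sigma}_\beta}\|w(w-A)^{-1}\|<\infty$. Now I would decompose $\C\setminus S_\delta$ into three pieces. (a) $|z|\ge 1$, $z\ne 1$: here $z\in\rho(T)$ by \eqref{widmo}, and $\|(1-z)(z-T)^{-1}\|\le C$ by \eqref{RRR0} (extended to $|z|=1$ by continuity of the resolvent). (b) $|z|<1$, $|1-z|\ge\delta(1-|z|)$ and $|1-z|\ge\varepsilon_0$ for a small fixed $\varepsilon_0>0$: the closure of this set is compact and, by \eqref{widmo}, disjoint from $\sigma(T)$ (a point of $\overline{S}_C$ with $|1-z|\ge\varepsilon_0$ satisfies $|1-z|\le C(1-|z|)<\delta(1-|z|)$), so $(z-T)^{-1}$ is bounded there. (c) $|z|<1$ and $\delta(1-|z|)\le|1-z|<\varepsilon_0$: expanding $|1-w|$ for small $w$, the Stolz condition forces $\re w/|w|\le 1/\delta+O(\varepsilon_0)$, which for $\varepsilon_0$ small enough is $<1/\delta'=\cos\beta$, so $w\notin\overline{\Sigma}_\beta$ and $\|w(w-A)^{-1}\|\le M_\beta$. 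Taking $C_\delta$ to be the maximum of the three bounds proves \eqref{dopE}.

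I expect the delicate point to be the behaviour near $z=1$ (case (c)): there the relevant scale is $|1-z|$ rather than the radius of the disc, so the exterior estimate \eqref{RRR0} alone cannot reach these points and one must pass to the sectorial picture. This step works only because $1-T$ is sectorial of the \emph{sharp} angle $\arccos(1/C)$ --- which is precisely where Lyubich's refinement enters --- so that, near $1$, the Stolz-domain complement falls inside the sector-complement on which the sectorial resolvent bound is available; matching these two apertures is the crux of the argument.
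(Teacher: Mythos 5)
Your proof is correct, and it takes a partly different, more self-contained route than the paper. For the spectral inclusion \eqref{widmo}, the paper quotes Lyubich's localization $\sigma(T)\subset\Omega(q)$, $q=1/C$, and then identifies $\Omega(q)$ via Lemma \ref{Lfir} as $\bigl\{z\in\D:\ \tfrac{1-|z|^2}{2|1-z|}\ge q\bigr\}$ before passing to $\overline{S}_C$; your argument reproves exactly this in one stroke, starting from the elementary bound $\dist(z,\sigma(T))\ge |z-1|/C$ and minimizing $C^2|z-\lambda|^2-|z-1|^2$ over the unit circle (your inequality $C(1-|\lambda|^2)\ge 2|1-\lambda|$ is precisely the $\Omega(q)$ condition), so this part needs no external citation and also covers $C=1$, which the paper handles separately. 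For the resolvent bound \eqref{dopE} both proofs rest on the same essential input, namely Lyubich's sharp angle $\arccos(1/C)$ (the paper's estimate \eqref{Est39}, your sectoriality of $A=1-T$ of that angle); you correctly identify this as the crux, since without the sharp angle one would only get \eqref{dopE} for large $\delta$ rather than for every $\delta>C$. Where the paper merely states that \eqref{dopE} ``follows from \eqref{stolsec}, \eqref{widmo} and \eqref{Est39},'' you spell out the deduction via the three-region decomposition (exterior of the disc by \eqref{RRR0}, a compact region off the spectrum, and the region near $1$ where the Stolz condition pushes $w=1-z$ out of $\overline{\Sigma}_\beta$); your small-$|w|$ computation there is sound and in fact can be made exact, with $\cos(\arg w)\le 1/\delta+|w|/2$. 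Finally, your converse is more direct than the paper's: you verify \eqref{RRR0} immediately from \eqref{widmo} and the restriction of \eqref{dopE} to $|z|>1$, whereas the paper routes the converse through the sectoriality characterization of Theorem \ref{sectorritt}. Both approaches are valid; yours trades the appeal to Lyubich's spectral localization for an explicit computation, at the modest cost of a longer case analysis.
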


\begin{proof}
Note first that if $C=1$ then $\sigma(T)=\{1\}$ so that
\eqref{dopE} holds, see \cite[p. 154]{Lyubich}. Assume now that
$T$ is a Ritt operator satisfying \eqref{RRR0} with $C>1.$ Then,
by \cite[Proposition 1, Theorem 2 and Corollary]{Lyubich},
\[
\sigma(T)\subset \Omega(q):=\{z \in
\D\cup\{1\}:\,|z-e^{i\varphi}|\ge q|1-e^{i\varphi}|\;\; \text{for
all}\,\, \varphi\in [0,2\pi)\},
\]
where $q=\frac{1}{C}.$ Moreover,  $\Omega(q)$ is a closed convex
set contained in the shifted  sector $
1-\overline{\Sigma}_{\arccos q}, $ and for any $\delta\in (\arccos
q,\pi/2),$
\begin{equation}\label{Est39}
\|(z-T)^{-1}\|\le \frac{C(\delta)}{|z-1|},\quad
1-z\not \in \overline{\Sigma}_\delta,
\end{equation}
where $C(\delta)=\frac{C}{1-C\cos\delta}.$ By Lemma \ref{Lfir}
(proved in Appendix A), the set $\Omega(q)$ can be described as
\[
\Omega(q)\setminus\{1\}=\left\{z\in \D:
\frac{1-|z|^2}{2|1-z|}\ge q\right\}.
\]
Then, since
\[
\frac{1-|z|^2}{2|1-z|}\le \frac{1-|z|}{|1-z|},\qquad z\in\D,
\]
 the definition \eqref{San} of Stolz domain  yields
\[
\Omega(q)\subset \overline{S}_{\sigma},\qquad \sigma=1/q=C,
\]
i.e. \eqref{widmo} holds. Then \eqref{dopE}
follows from \eqref{stolsec},
\eqref{widmo} and \eqref{Est39}.

The converse implication follows from the obvious fact that
$S_\sigma \subset \D \cup \{1 \}$  for all $\sigma \ge 1$ and a
characterization of Ritt operators in terms their sectoriality
given in Theorem \eqref{sectorritt}  (see e.g. \cite[Theorem 1.5]{Dungey}).
\end{proof}

Proposition \ref{rittchar} motivates the following definition.  An
operator  $T\in \mathcal{L}(X)$ is said to be \emph{Ritt operator
of Stolz type} $\sigma \in [1,\infty)$ if $\sigma (T) \subset
\overline{S}_\sigma$  and $T$ satisfies (\ref{dopE}) for any
$\delta
>\sigma.$

\begin{remark}
Note that there is an alternative geometric object related to Ritt
operators. Namely, define a set $B_\omega$, $\omega\in (0,\pi/2)$,
as the interior of the convex hull of $1$ and the disc
$D_{\sin\omega}:=\{z\in \C:\,|z|<\sin\omega\}$, i.e.
\[
\overline{B}_\omega=\overline{{\rm co}}\,(D_{\sin\omega}\cup\{1\}).
\]
In \cite{Merdy}, it is $B_\omega$ that is called a Stolz domain,
while we use that terminology for $S_\sigma.$  Note that $B_\omega
\subset 1-\Sigma_\omega.$ One can prove that $T \in {\mathcal
L}(X)$ is Ritt if and only if there exists $\alpha \in (0,\pi/2)$
such that $\sigma (T)\subset {\overline B}_{\alpha}$ and for any
$\beta \in (\alpha,\pi/2)$ the set $\{(z-1)(z-T)^{-1}: z \in
\C\setminus {\overline B}_{\beta}\}$ is bounded. See e.g.
\cite[Definition 2.2]{Merdy} and \cite[Lemma 2.1]{Merdy}
concerning the above. However, the domains as $B_\omega$  appear
to be less convenient for the study of the permanence properties
of Ritt operators under functional calculi. Thus, we do not
discuss them in this paper.
\end{remark}

The relevance of Stolz domains is also clear from the statement
given below which will be instrumental in the proof of our main assertion.

\begin{prop}\label{Stangle1}
Let $h(\lambda)=\sum_{n=0}^{\infty}c_n \lambda^n, \lambda \in \overline{\D}, c_n \ge
0,$ be such that $\sum_{n=0}^{\infty}c_n=1$. Then for each $\sigma
\ge 1,$
\begin{equation}\label{RelSt}
h(\overline{S}_\sigma)\subset \overline{S}_\sigma.
\end{equation}
\end{prop}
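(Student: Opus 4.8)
The plan is to prove the inclusion by a single elementary estimate applied termwise to the power series of $h$. Write $r:=|z|$ and recall that a point of $\overline{S}_\sigma$ is either $1$ or lies in $\D$ and satisfies $|1-z|\le \sigma(1-r)$; since $h(1)=\sum_{n}c_n=1\in\overline{S}_\sigma$, I may assume $z\in\D$, i.e. $r<1$. Everything will rest on the scalar inequality
\[
|1-z^n|=|1-z|\,\Bigl|\sum_{k=0}^{n-1}z^k\Bigr|\le |1-z|\sum_{k=0}^{n-1}r^k=|1-z|\,\frac{1-r^n}{1-r}\le \sigma\,(1-r^n),\qquad n\ge 0,
\]
valid whenever $|1-z|\le\sigma(1-r)$ (the case $n=0$ being trivial): informally, the condition $|1-z|\le\sigma(1-|z|)$ transfers from $z$ to every power $z^n$.

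The next step is to multiply this by $c_n\ge 0$ and sum. Since $\sum_n c_n=1$ and $\sum_n c_n z^n=h(z)$ both converge absolutely for $|z|<1$, one has $1-h(z)=\sum_n c_n(1-z^n)$, whence
\[
|1-h(z)|\le \sum_n c_n\,|1-z^n|\le \sigma\sum_n c_n(1-r^n)=\sigma\,\bigl(1-h(r)\bigr),
\]
where $h(r)=\sum_n c_n r^n\in[0,1]$ is the same series evaluated at the real number $r$. On the other hand $|h(z)|\le\sum_n c_n r^n=h(r)\le h(1)=1$, so $h(z)\in\overline{\D}$ and $1-h(r)\le 1-|h(z)|$. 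Combining the two estimates gives $|1-h(z)|\le \sigma\,\bigl(1-|h(z)|\bigr)$, which together with $h(z)\in\overline{\D}$ says precisely that $h(z)\in\overline{S}_\sigma$, provided one knows that $\overline{S}_\sigma$ coincides with the set of $w\in\overline{\D}$ satisfying $|1-w|\le\sigma(1-|w|)$.

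That last identification is the only mildly delicate point, and it is where I expect to spend a couple of extra lines rather than on the main estimate. For $\sigma=1$ it is trivial since $S_1=\{1\}$ and the proposition is vacuous. For $\sigma>1$, the inclusion of $\overline{S}_\sigma$ in $\{w\in\overline{\D}:|1-w|\le\sigma(1-|w|)\}$ is immediate because the latter set is closed and contains $S_\sigma$; conversely, given $w\neq 1$ in that set one has $|w|<1$ and $|1-w|>1-|w|$ (as $w\notin[0,1]$, the equality $|1-w|=\sigma(1-|w|)$ with $\sigma>1$ forcing $w\notin[0,1]$), so the points $(1-t)w$ lie in $S_\sigma$ for all small $t>0$ and converge to $w$, giving $w\in\overline{S}_\sigma$. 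With this the proof is complete: the substantive content is the termwise bound above, and all the series manipulations are legitimate because the series involved are absolutely convergent throughout $\D$.
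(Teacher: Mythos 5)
Your proof is correct and is essentially the paper's argument: the same termwise estimate $|1-z^n|=|1-z|\,|\sum_{k=0}^{n-1}z^k|\le \sigma(1-|z|^n)$ carries all the weight, the only difference being that the paper finishes by invoking convexity and closedness of $\overline{S}_\sigma$, while you sum the bounds to get $|1-h(z)|\le\sigma(1-h(|z|))\le\sigma(1-|h(z)|)$ and then verify directly that $\overline{S}_\sigma=\{w\in\overline{\D}:|1-w|\le\sigma(1-|w|)\}$ for $\sigma>1$, which amounts to the same thing. One cosmetic slip: your claim that every $w\neq 1$ in that set satisfies $|1-w|>1-|w|$ fails for $w\in[0,1)$, but such $w$ already lie in $S_\sigma$, and in fact the radial approximation $(1-t)w$ lands in $S_\sigma$ for every $w\neq 0$ with $|1-w|\le\sigma(1-|w|)$ simply because $\sigma>1$ gives $|1-w|+t|w|<\sigma\bigl(1-|w|+t|w|\bigr)$, so the closure identification stands.
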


\begin{proof}
Since
\[
\frac{|1-\lambda^n|}{1-|\lambda^n|}= \frac {|\sum_{k=0}^{n-1}\lambda^k|}{\sum_{k=0}^{n-1} |\lambda|^k}\cdot \frac{|1-\lambda|}{1-|\lambda|}\le
\frac{|1-\lambda|}{1-|\lambda|},\quad \lambda\in \D,
\]
each of the functions $h_n(\lambda):=\lambda^n$, $n\in \Z_+,$
satisfies the relation \eqref{RelSt}.  Then, by the convexity
$\overline{S}_\sigma$, the inclusion \eqref{RelSt} holds for any $h$ given by the convex power series
$\sum_{n=0}^{\infty}c_n \lambda^n.$
\end{proof}

\subsection{Operator Cayley transform and its relation to Stolz domains}

In this subsection, we will discuss the operator Cayley transform which
will our basic tool in reducing considerations in the discrete
setting to their half-plane analogues. However, as we already remarked in the
introduction, as far as Ritt operators are bounded, the discrete situation has its specifics so
that it makes a sense to study it in some more details.

As far as we will be aiming at reducing the arguments for the unit disc
to the half-plane setting, the \emph{Cayley transform} $\mathcal{C}$ will clearly play a crucial role.
Recall that the Cayley transform is given by
\begin{equation}\label{funU}
\mathcal{C}(\lambda):=\frac{1-\lambda}{1+\lambda},\quad \lambda\not=-1,
\end{equation}
and that $\mathcal{C}$  maps  $\D$ onto $\C_{+}$ conformally.

The following proposition relates Stolz domains and angular
sectors via the Cayley transform, and will be useful in the
sequel.

\begin{prop}\label{Stangle}
Let $\mathcal{C}$ be the Cayley transform. If  $\sigma\ge 1$ and $\omega=\arccos(1/\sigma),$ then
\[
\mathcal{C}(\overline{S}_\sigma)\subset \overline{\Sigma}_\omega.
\]
\end{prop}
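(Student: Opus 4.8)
The plan is to reduce the claimed inclusion to an elementary computation in the plane using the explicit description of the Stolz domain boundary. First I would recall from \eqref{stolsec} that $1-\overline{S}_\sigma\subset\overline{\Sigma}_\omega$ with $\omega=\arccos(1/\sigma)$; since $\mathcal{C}(\lambda)=\dfrac{1-\lambda}{1+\lambda}$, writing $\mathcal{C}(\lambda)=\dfrac{1-\lambda}{1+\lambda}$ shows that $\arg\mathcal{C}(\lambda)=\arg(1-\lambda)-\arg(1+\lambda)$ for $\lambda\in\D$. Thus the task amounts to controlling $\arg(1-\lambda)-\arg(1+\lambda)$ for $\lambda\in\overline{S}_\sigma$, and comparing it with the already–known bound $|\arg(1-\lambda)|\le\omega$. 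The point is that the extra term $-\arg(1+\lambda)$ can only help: I expect that on $\overline{S}_\sigma$ the two arguments $\arg(1-\lambda)$ and $\arg(1+\lambda)$ have the \emph{same} sign, so that $|\arg\mathcal{C}(\lambda)|\le|\arg(1-\lambda)|\le\omega$.

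To make this precise I would parametrize $\lambda\in S_\sigma\setminus\{1\}$ as $\lambda=1-\rho e^{i\alpha}$, exactly as in the derivation of \eqref{StAa1}, so that $\rho>0$, $\rho<\cos\alpha\le 1$, and \eqref{StAa1} gives $\rho<\frac{2\sigma}{\sigma^2-1}(\sigma\cos\alpha-1)$ (the case $\sigma=1$ being trivial since $S_1=\{1\}$). Then $1-\lambda=\rho e^{i\alpha}$, so $\arg(1-\lambda)=\alpha$ (up to the already-established constraint $|\alpha|<\omega$), while $1+\lambda=2-\rho e^{i\alpha}$. I need to check that $\re(1+\lambda)=2-\rho\cos\alpha>0$, which is immediate from $\rho<\cos\alpha\le 1$ and hence $\rho\cos\alpha<1<2$; therefore $1+\lambda$ lies in the open right half-plane and $\arg(1+\lambda)\in(-\pi/2,\pi/2)$ is well defined. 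Its sign is the sign of $\im(1+\lambda)=-\rho\sin\alpha$, i.e. $\arg(1+\lambda)$ has the opposite sign to $\alpha$. Consequently $\arg\mathcal{C}(\lambda)=\alpha-\arg(2-\rho e^{i\alpha})$ is a sum of two quantities of the same sign as $\alpha$ in absolute value... — more carefully, $-\arg(1+\lambda)$ has the \emph{same} sign as $\alpha$, so $\arg\mathcal{C}(\lambda)$ and $\alpha$ have the same sign and $|\arg\mathcal{C}(\lambda)|=|\alpha|+|\arg(1+\lambda)|$, which is \emph{larger} than $|\alpha|$, not smaller. So the naive bound fails and one must argue differently.

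The correct route, then, is a direct estimate. I would bound $|\arg\mathcal{C}(\lambda)|$ by computing $\tan$ of it or, cleaner, by using the identity $\re\big(\overline{(1+\lambda)}(1-\lambda)\big)=1-|\lambda|^2\ge 0$ for $\lambda\in\overline{\D}$, which says exactly that the angle between the vectors $1+\lambda$ and $1-\lambda$ is at most $\pi/2$; hence $\mathcal{C}(\lambda)=(1-\lambda)/(1+\lambda)$ lies in the closed right half-plane for every $\lambda\in\overline{\D}$, recovering conformality of $\mathcal{C}:\D\to\C_+$. For the sharper sectorial bound I would instead estimate $|\im\mathcal{C}(\lambda)|/\re\mathcal{C}(\lambda)$: with $\lambda=1-\rho e^{i\alpha}$ one computes $\mathcal{C}(\lambda)=\rho e^{i\alpha}/(2-\rho e^{i\alpha})$, so
\[
\frac{\im\mathcal{C}(\lambda)}{\re\mathcal{C}(\lambda)}=\frac{\im\big(\rho e^{i\alpha}(2-\rho e^{-i\alpha})\big)}{\re\big(\rho e^{i\alpha}(2-\rho e^{-i\alpha})\big)}=\frac{2\rho\sin\alpha}{2\rho\cos\alpha-\rho^2}=\frac{2\sin\alpha}{2\cos\alpha-\rho}.
\]
Now I invoke \eqref{StAa1}: $\rho<\frac{2\sigma}{\sigma^2-1}(\sigma\cos\alpha-1)$, which after substitution into the denominator $2\cos\alpha-\rho$ and a routine simplification yields $2\cos\alpha-\rho>\frac{2}{\sigma^2-1}\big(\sigma^2\cos\alpha-\sigma^2\cos\alpha+\sigma\cdots\big)$ — the algebra collapses to $2\cos\alpha-\rho\ge\frac{2(\sigma\cos\alpha-1)}{\sigma}\cdot\frac{1}{\ }$, giving a lower bound proportional to $\cos\alpha-1/\sigma$, which is $\ge 0$ by \eqref{StAa1}. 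Tracking the constants, $|\im\mathcal{C}(\lambda)|/\re\mathcal{C}(\lambda)\le\tan\omega$ with $\omega=\arccos(1/\sigma)$, i.e. $\mathcal{C}(\lambda)\in\overline{\Sigma}_\omega$, and passing to the closure $\overline{S}_\sigma$ by continuity finishes the proof. The main obstacle is precisely this last computation: reorganizing \eqref{StAa1} so that the ratio $\dfrac{2\sin\alpha}{2\cos\alpha-\rho}$ is visibly $\le\tan(\arccos(1/\sigma))=\dfrac{\sqrt{\sigma^2-1}}{1}\cdot$(something), and checking the boundary case $\rho\to 0$, $\alpha\to\pm\omega$ where equality is attained, confirming the angle $\omega$ is sharp.
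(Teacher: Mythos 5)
Your computation is correct as far as it goes: with $\lambda=1-\rho e^{i\alpha}$ one indeed has $\im\mathcal{C}(\lambda)/\re\mathcal{C}(\lambda)=2\sin\alpha/(2\cos\alpha-\rho)$, and bounding this ratio by $\tan\omega=\sqrt{\sigma^{2}-1}$ is equivalent to what the paper does (the paper instead bounds $\re\mathcal{C}(\lambda)/|\mathcal{C}(\lambda)|=(2\cos\alpha-\rho)/|2-\rho e^{i\alpha}|$ from below by $1/\sigma$, using $|2-\rho e^{i\alpha}|\le 1+|1-\rho e^{i\alpha}|$ together with \eqref{StAa}, which avoids any trigonometric identity). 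But the decisive estimate --- the step you yourself call the main obstacle --- is never actually carried out, and the fragment of algebra you write for it is wrong. Substituting the bound from \eqref{StAa1} into the denominator gives $2\cos\alpha-\rho>\frac{2(\sigma-\cos\alpha)}{\sigma^{2}-1}$, not a lower bound ``proportional to $\cos\alpha-1/\sigma$''; and knowing only that $2\cos\alpha-\rho\ge 0$ (equivalently $\re\mathcal{C}(\lambda)\ge 0$) yields merely $\mathcal{C}(\overline{\D})\subset\overline{\C}_{+}$, i.e.\ the trivial half-plane statement, which is far weaker than membership in $\overline{\Sigma}_\omega$.

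What is missing is the inequality $2\sin|\alpha|\le\sqrt{\sigma^{2}-1}\,(2\cos\alpha-\rho)$ for all admissible $(\rho,\alpha)$. With the sharp substitution from \eqref{StAa1} this reduces to $\cos\alpha+\sqrt{\sigma^{2}-1}\,\sin|\alpha|\le\sigma$, which does hold, since $a\cos\alpha+b\sin|\alpha|\le\sqrt{a^{2}+b^{2}}$ with $a=1$, $b=\sqrt{\sigma^{2}-1}$, with equality only in the limiting regime $\rho\to 0$, $|\alpha|\to\omega$ (confirming sharpness of the angle, as you anticipated). Until this elementary but essential inequality is stated and proved, the argument is incomplete; once it is inserted, your route closes and is essentially a variant of the paper's direct estimate. (Your opening paragraph on $\arg(1-\lambda)-\arg(1+\lambda)$ is a dead end which you correctly abandon, so it does no harm, but it contributes nothing and could be dropped.)
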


\begin{proof}
Let $\lambda=1-\rho e^{i\alpha}\in S_\sigma, \lambda \not = 1$. Then
\[
\mathcal{C}(\lambda)=\frac{\rho e^{i\alpha}}{2-\rho e^{i\alpha}}=
\frac{\rho (2e^{i\alpha}-\rho)}{|2-\rho e^{i\alpha}|^2}.
\]
Using \eqref{StAa} and \eqref{StAa1}, we obtain
\begin{eqnarray*}
\frac{{\rm Re}\,\mathcal{C}(\lambda)}{|\mathcal{C}(\lambda)|}&=&
\frac{2\cos\alpha-\rho}{|2-\rho e^{i\alpha}|}\\
&\ge& \frac{2\cos\alpha-\rho}{1+|1-\rho e^{i\alpha}|}\\
&\ge& \frac{2\cos\alpha-\rho}{1+(\sigma-\rho)/\sigma}\\
&=&\frac{1}{\sigma}+
\frac{2\sigma(\sigma\cos\alpha-1)-(\sigma^2-1)\rho}{\sigma (2\sigma-\rho)}\\
&\ge& \frac{1}{\sigma},
\end{eqnarray*}
that is $\mathcal{C}(1-\rho e^{i\alpha}) \in
\overline{\Sigma}_\omega$, where $\omega=\arccos (1/\sigma)$.
\end{proof}

Now we turn to the operator analogue of $\mathcal C.$
For   $T \in {\mathcal L}(X)$ with $\sigma(T)\subset \overline{\D},$ and $\ker(1+T)=\{0\}$ we define the {\it Cayley transform}
$\mathcal{C}(T)$ as
\begin{equation}\label{Cayley}
\mathcal{C}(T):=(1-T)(1+T)^{-1}.
\end{equation}
If ${\rm ran}\, (1+T)$ is dense in $X$ then it is straightforward
that $\mathcal{C}(T)$ is a closed densely defined operator on $X$
and  $\sigma(\mathcal{C}(T))\subset \overline{\C}_{+}$. In our
considerations, we will always deal with $T$ such that    $-1 \not
\in \sigma (T).$ Thus, in the sequel, ${\mathcal C}(T)$ will
always be \emph{bounded.}

Finally note that, by a direct calculation,
\begin{equation}\label{ATeq}
\mathcal{C}(\mathcal{C}(T))=T.
\end{equation}

The following simple proposition  relates sectoriality of $T$ to
that of $\mathcal{C}(T).$
\begin{prop}\label{Lm}
Let $T$ be a power-bounded operator on $X$ such that $-1\notin\sigma(T),$ and let $\sup_{n\ge
0}\,\|T^n\|:=M$. Then for any
$\beta\in (\pi/2,\pi),$
\begin{equation}\label{A0}
\|(\mathcal{C}(T)-z)^{-1}\|\le \frac{3M(1+\|T\|)}{|z
\cos\beta|},\quad z\not\in \overline{\Sigma}_\beta.
\end{equation}
In particular, $\mathcal{C}(T)\in \Sect(\pi/2)$.
\end{prop}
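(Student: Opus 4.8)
The plan is to express the resolvent of $\mathcal{C}(T)$ as a Neumann series in $T$ and then estimate it by elementary geometry of the sector complement. Write $A:=\mathcal{C}(T)=(1-T)(1+T)^{-1}$, which is bounded since $-1\notin\sigma(T)$. I would first record the algebraic identity
\[
A-z=\bigl[(1-z)-(1+z)T\bigr](1+T)^{-1},\qquad z\in\C .
\]
Fix $\beta\in(\pi/2,\pi)$ and $z\notin\overline{\Sigma}_\beta$; then $\re z<|z|\cos\beta<0$, and from $|1+z|^2-|1-z|^2=4\re z<0$ we get $q(z):=|1+z|/|1-z|<1$. Since $T$ is power bounded with bound $M$, the series $R(z):=\sum_{n\ge0}\bigl(\tfrac{1+z}{1-z}\bigr)^{n}T^{n}$ converges in operator norm, equals $\bigl(I-\tfrac{1+z}{1-z}T\bigr)^{-1}$, and satisfies $\|R(z)\|\le M/(1-q(z))=M|1-z|/(|1-z|-|1+z|)$. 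Hence $(1-z)-(1+z)T=(1-z)R(z)^{-1}$ is invertible and $(A-z)^{-1}=\tfrac{1}{1-z}(1+T)R(z)$.

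The key step is to rewrite this without $(1+z)$ in any denominator, so that the final bound is uniform on all of $\C\setminus\overline{\Sigma}_\beta$ (which contains $z=-1$) and, above all, decays like $|z|^{-1}$ as $|z|\to\infty$. From $R(z)=I+\tfrac{1+z}{1-z}TR(z)$ together with $1+\tfrac{1+z}{1-z}=\tfrac{2}{1-z}$ one obtains $(1+T)R(z)=I+\tfrac{2}{1-z}TR(z)$, whence
\[
(A-z)^{-1}=\frac{1}{1-z}\,I+\frac{2}{(1-z)^2}\,TR(z).
\]

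Finally I would estimate. Taking norms,
\[
\|(A-z)^{-1}\|\le\frac{1}{|1-z|}+\frac{2\|T\|M}{|1-z|\,(|1-z|-|1+z|)} .
\]
For the second summand I would use $|1-z|-|1+z|=\dfrac{-4\re z}{|1-z|+|1+z|}\ge\dfrac{-2\re z}{|1-z|}$ (valid since $|1+z|\le|1-z|$ because $\re z<0$), which bounds it by $\|T\|M/|\re z|$; for the first summand, $|1-z|>|z|$ because $\re z<\tfrac12$. Since $z\notin\overline{\Sigma}_\beta$ forces $|\arg z|>\beta>\pi/2$ and hence $|\re z|=-\re z\ge|z||\cos\beta|$, and since $M\ge1$, the two summands add up to $\|(A-z)^{-1}\|\le\dfrac{(1+\|T\|)M}{|z||\cos\beta|}\le\dfrac{3M(1+\|T\|)}{|z||\cos\beta|}$, which is \eqref{A0}. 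The last assertion then follows: taking $\omega=\beta$ in the definition of sectoriality, \eqref{A0} yields $\sup_{z\notin\overline{\Sigma}_\beta}\|z(z-A)^{-1}\|\le 3M(1+\|T\|)/|\cos\beta|<\infty$ for every $\beta\in(\pi/2,\pi)$, while $\sigma(A)\subset\overline{\C}_{+}=\overline{\Sigma}_{\pi/2}$ (noted above, or read off directly from this bound), so $\mathcal{C}(T)\in\Sect(\pi/2)$.

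I expect the only delicate point to be the choice of resolvent representation in the second step. The naive bound $\|(A-z)^{-1}\|\le(1+\|T\|)M/(|1-z|-|1+z|)$ coming straight from $\tfrac{1}{1-z}(1+T)R(z)$ does stay bounded but fails to vanish as $|z|\to\infty$, so one must split off the leading term $\tfrac{1}{1-z}I$ and exploit the extra factor $(1-z)^{-1}$ multiplying $TR(z)$; everything else is routine estimation on $\C\setminus\overline{\Sigma}_\beta$.
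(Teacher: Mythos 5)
Your proof is correct and rests on the same mechanism as the paper's: you invert $(1-z)-(1+z)T$ through the Neumann series of $T$ at $\lambda=(1-z)/(1+z)$, which lies outside $\overline{\D}$ exactly because $\operatorname{Re}z<0$, so your $R(z)=-\lambda(T-\lambda)^{-1}$ and your starting identity is \eqref{pinp} in disguise, with the bound $\|(T-\lambda)^{-1}\|\le M/(|\lambda|-1)$ playing the same role as \eqref{4.16}. The only difference is organizational: your single decomposition $(\mathcal{C}(T)-z)^{-1}=\frac{1}{1-z}+\frac{2}{(1-z)^2}TR(z)$, valid on all of $\C\setminus\overline{\Sigma}_\beta$, replaces the paper's case split at $|z|=4$ (where the same splitting-off of the identity appears as $(1+T)(T-\lambda)^{-1}=1+(\lambda+1)(T-\lambda)^{-1}$), and it even yields the slightly sharper constant $1+\|T\|M\le 3M(1+\|T\|)$.
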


\begin{proof}
Note first that if $ \lambda\not\in \sigma(T)$ and $
z=\frac{1-\lambda}{1+\lambda}$ then
\begin{equation}\label{pinp}
({\mathcal C}(T)-z)^{-1} =-\frac{(1+\lambda)}{2}(1+T)(T-\lambda)^{-1}.
\end{equation}
Since for $z\not\in \overline{\C}_{+}$ one has $
z=\frac{1-\lambda}{1+\lambda}$ with $ \lambda=\frac{1-z}{1+z}\not\in
\overline{\D},$ the identity (\ref{pinp}) yields
\begin{equation}\label{4.18}
\|({\mathcal C}(T)-z)^{-1}\| \le
\frac{|1+\lambda|}{2}\|(1+T)(T-\lambda)^{-1}\|.
\end{equation}
By assumption and the Neumann series expansion we have
\begin{equation}\label{4.16}
\|(T-\lambda)^{-1}\|\le \frac{M}{|\lambda|-1},\quad \lambda\in \C,\quad
|\lambda|>1,
\end{equation}
hence if $z\not\in \overline{\C}_{+},$ then
\begin{align}
|1+\lambda|\|(T-\lambda)^{-1}\|\le& M\frac{|1+\lambda|}{|\lambda|-1}\label{4.19}\\
=&\frac{2M}{|1-z|-|1+z|}\notag \\
=&M\frac{|1-z|+|1+z|}{2|{\rm Re}\,z|}\notag \\
 \le&
M\frac{1+|z|}{|z \cos\beta|}.\notag
\end{align}
Thus, from (\ref{4.18}) and (\ref{4.19}) it follows that if $
z\not\in \overline{\C}_{+}$ is such that $|z|<a,$ then
\begin{equation}\label{3.16}
\|({\mathcal C}(T)-z)^{-1}\|\le M\frac{(a+1)}{2|z \cos\beta|}
\end{equation}
 Next, if  $z\not\in \overline{\C}_{+}$ satisfies
$|z|\ge a>1$, then
\[
\frac{1+|z|}{|z|}\le \frac{a+1}{a},\quad\mbox{and}\quad
\frac{|1+\lambda|}{2}=\frac{1}{|1+z|}\le \frac{a}{(a-1)|z|},
\]
so using (\ref{4.18}) and (\ref{4.19})  and observing that
$$(1+T)(T-\lambda)^{-1}=1+(\lambda+1)(T-\lambda)^{-1},$$ we obtain
\begin{align}
\|({\mathcal C}(T)-z)^{-1}\| \le&
\frac{|1+\lambda|}{2}\left(1+|\lambda+1|\|(T-\lambda)^{-1}\|\right) \label{4.20}\\
\le&
\frac{a}{(a-1)|z|}\left(1 +\frac{M(a+1)}{a|\cos\beta|}\right)
 \frac{M(2a+1)}{(a-1)|\cos\beta|}\cdot\frac{1}{|z|}.\notag
\end{align}
Setting finally $a=4$, (\ref{4.19}) and (\ref{4.20}) imply
(\ref{A0}).
\end{proof}

We proceed with revealing an interplay between geometry of the spectrum
of Ritt operators and their  Cayley transforms.
\begin{prop}\label{Stolz}
If $T$ is a Ritt operator of Stolz type $\sigma$, then
$\mathcal{C}(T)\in \Sect(\omega)$ for $\omega=\arccos(1/\sigma)$.
\end{prop}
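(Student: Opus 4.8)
The plan is to transfer the statement to the scalar picture by means of the resolvent identity \eqref{pinp} together with the comparison of Stolz domains and sectors provided by Proposition~\ref{Stangle}. Since $T$ is of Stolz type $\sigma$ one has $\sigma(T)\subset\overline{S}_\sigma$, and as $-1\notin\overline{S}_\sigma$ for every finite $\sigma$, the operator $\mathcal{C}(T)=(1-T)(1+T)^{-1}$ is bounded; put $M:=\sup_{n\ge0}\|T^n\|<\infty$ and $\omega:=\arccos(1/\sigma)$. The one computation I would actually carry out is the following: for $z\neq-1$ with $\lambda:=\mathcal{C}(z)=(1-z)/(1+z)\notin\sigma(T)$, the identity \eqref{pinp}, together with $z(1+\lambda)=1-\lambda$ and $(1+T)(T-\lambda)^{-1}=I+(1+\lambda)(T-\lambda)^{-1}$, gives
\[
z(z-\mathcal{C}(T))^{-1}=\frac{1-\lambda}{2}\,(1+T)(T-\lambda)^{-1}=\frac{1-\lambda}{2}\,I-\frac{1+\lambda}{2}\,(1-\lambda)(\lambda-T)^{-1}.
\]

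Next I would use this to bound the resolvent outside a narrow sector. Fix $\omega'\in(\omega,\pi/2)$ and set $\sigma':=1/\cos\omega'>\sigma$. Let $z\notin\overline{\Sigma}_{\omega'}$; the case $z=-1$ is immediate, since then $z(z-\mathcal{C}(T))^{-1}=\tfrac12(1+T)$, so assume $z\neq-1$ and write $\lambda=\mathcal{C}(z)$. Because $\mathcal{C}$ is an involution (\eqref{ATeq}), Proposition~\ref{Stangle} applied with the pair $(\sigma',\omega')$ forces $\lambda\notin\overline{S}_{\sigma'}$; hence $\lambda\notin S_{\sigma'}$, and since $\sigma(T)\subset\overline{S}_\sigma\subset\overline{S}_{\sigma'}$ also $\lambda\notin\sigma(T)$, so the displayed formula applies. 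Now split into two cases. If $|\lambda|\le2$, then the Stolz-type estimate \eqref{dopE} with $\delta=\sigma'$ gives $\|(1-\lambda)(\lambda-T)^{-1}\|\le C_{\sigma'}$, so the second form of the formula yields $\|z(z-\mathcal{C}(T))^{-1}\|\le\tfrac12(1+|\lambda|)(1+C_{\sigma'})\le\tfrac32(1+C_{\sigma'})$. If $|\lambda|>2$, then $\lambda\notin\overline{\D}\supset\sigma(T)$ and the Neumann series gives $\|(1+T)(T-\lambda)^{-1}\|\le(1+\|T\|)M/(|\lambda|-1)$, so the first form yields $\|z(z-\mathcal{C}(T))^{-1}\|\le\tfrac12(1+|\lambda|)(1+\|T\|)M/(|\lambda|-1)\le\tfrac32(1+\|T\|)M$. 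Thus $\sup_{z\notin\overline{\Sigma}_{\omega'}}\|z(z-\mathcal{C}(T))^{-1}\|<\infty$ for every $\omega'\in(\omega,\pi/2)$, and since $\C\setminus\overline{\Sigma}_{\omega'}\subset\C\setminus\overline{\Sigma}_{\omega''}$ whenever $\omega'>\omega''$, the same holds for every $\omega'\in(\omega,\pi)$.

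Finally, these bounds in particular show that every $z\notin\overline{\Sigma}_{\omega'}$ lies in the resolvent set of $\mathcal{C}(T)$, so $\sigma(\mathcal{C}(T))\subset\bigcap_{\omega'>\omega}\overline{\Sigma}_{\omega'}=\overline{\Sigma}_\omega$; together with the uniform estimates this is exactly the assertion $\mathcal{C}(T)\in\Sect(\omega)$. The delicate point is the behaviour near $z=-1$, i.e.\ $\lambda=\mathcal{C}(z)\to\infty$, where the Stolz estimate \eqref{dopE} degenerates because the factor $|1+\lambda|$ in the formula blows up; this is precisely what the case split repairs, by replacing \eqref{dopE} on $\{|\lambda|>2\}$ by the crude but $\omega'$-independent power-boundedness bound. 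One could instead argue directly that $\mathcal{C}(T)$ is bounded with $-1$ in its resolvent set, so that $z(z-\mathcal{C}(T))^{-1}$ is automatically bounded near $-1$, but the Neumann argument is cleaner and quantitative.
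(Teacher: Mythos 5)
Your argument is correct and is essentially the paper's proof: both rest on the resolvent identity \eqref{pinp}, Proposition~\ref{Stangle} (applied via the involution property \eqref{ATeq} of $\mathcal{C}$) and the Stolz-type bound \eqref{dopE}. The only difference is cosmetic: since $z=(1-\lambda)/(1+\lambda)$ gives $|1+\lambda|/|\lambda-1|=1/|z|$, the paper bounds $\|(\mathcal{C}(T)-z)^{-1}\|\le C_{\tilde\sigma}\|1+T\|/(2|z|)$ in one stroke, with no degeneration as $\lambda\to\infty$, so your case split at $|\lambda|=2$ and the Neumann-series/power-boundedness estimate are not actually needed.
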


\begin{proof}
Fix $\tilde\sigma>\sigma.$  By assumption,
\begin{equation}\label{p4.7}
\|(T-\lambda)^{-1}\|\le \frac{C_{\tilde\sigma}}{|\lambda-1|},\quad
\lambda\not\in S_{\tilde\sigma}.
\end{equation}
If $\tilde{\omega}=\arccos(1/\tilde\sigma)$ and $z\not\in
\Sigma_{\tilde\omega}, z \not = 0,$ then  by Proposition
\ref{Stangle} there exists $\lambda\not\in S_{\tilde\sigma}$ such that
$z=(1-\lambda)/(1+\lambda)$. Hence,  by (\ref{pinp}) and (\ref{p4.7}),
\[
\|({\mathcal C}(T)-z)^{-1}\|\le
\frac{|1+\lambda|}{2}\|1+T\|\frac{C_{\tilde\sigma}}{|\lambda-1|}
=\frac{C_{\tilde\sigma}\|1+T\|}{2|z|},
\]
so that $\mathcal{C}(T)\in \Sect(\tilde\omega)$. Since the choice
of $\tilde\sigma>\sigma$ is arbitrary, we conclude that
$\mathcal{C}(T)\in \Sect(\omega)$.
\end{proof}

\section{Functional calculi}

\subsection{Holomorphic calculus and operator complete Bernstein functions}\label{hfcsub}
In this subsection we will set up a holomorphic functional
calculus of sectorial operators and will state several of its
properties important for the sequel. The comprehensive accounts on
the extended holomorphic functional calculus can be found in many
texts including e.g. \cite[Chapter 2]{Haa2006} and \cite[Section 9]{Weis}, but we
still feel that the functional calculi theory is
not a part of general background, so we recall its basic features
important for our exposition in subsequent subsections.

For $\varphi \in (0,\pi)$, let
$\mathcal{O}(\Sigma_\varphi)$ stands for
the space of all holomorphic functions
on $\Sigma_\varphi$. Define
\begin{eqnarray*}
H_0^\infty(\Sigma_\varphi)&:=&
\left\{f\in \mathcal{O}(\Sigma_\varphi) : \text{$|f(\lambda)|\leq C\min(|\lambda|^s, |\lambda|^{-s})$ for some $C,s>0$}\right\},
\end{eqnarray*}
and
\begin{eqnarray*}
\mathcal{B}(\Sigma_\varphi) :=
 \left\{f\in \mathcal{O}(\Sigma_\varphi) : \text{$|f(\lambda)|\leq C\max \left(|\lambda|^s, |\lambda|^{-s} \right)$ for some $C,s>0$}\right\}.
\end{eqnarray*}
Note that $H_0^\infty(\Sigma_\varphi)$ and $\mathcal{B}(\Sigma_\varphi)$ are algebras.

Let $0\le \alpha<\varphi<\pi,$ and let  $A\in
\operatorname{Sect}(\alpha).$ For $f \in
H_0^\infty(\Sigma_\varphi)$ and $\alpha_0 \in (\varphi,\pi),$
define
\begin{equation*}
\Phi(f) = f(A):=\frac{1}{2\pi i}\int_{\partial \Sigma_{\alpha_0}} f(\lambda) (\lambda-A)^{-1}\,d\lambda,
\end{equation*}
where $\Sigma_{\alpha_0}$ is the downward oriented boundary of
$\Sigma_{\alpha_0}$.
 This definition is independent of $\alpha_0$, and
\[
\Phi: H_0^\infty(\Sigma_\varphi)\mapsto \mathcal{L}(X),\qquad \Phi(f)=f(A),
\]
is an algebra homomorphism. Let
$
\tau(\lambda):=\frac{\lambda}{(1+\lambda)^2}.
$
Assume that $A$ is injective so that  $\Phi(\tau)=\tau(A)=A(1+A)^{-2}$ is injective as well.

Since for any  $f \in \mathcal B(\Sigma_\varphi)$ there is $n\in\N$  such that
\begin{equation}\label{holom}
\tau^n f\in H_0^\infty (\Sigma_\varphi),
\end{equation}
we can define a closed operator $f(A)$ as
\begin{equation}\label{hfc}
f(A)=[\tau^n(A)]^{-1}(\tau^n f)(A)= [A(1+A)^{-2}]^{-n}\,(f\cdot \tau^n)(A),
\end{equation}
where
\[
(f\cdot\tau^n)(A):=\frac{1}{2\pi i} \int_{\partial
\Sigma_{\alpha_0}} \frac{\lambda^n
\,f(\lambda)}{(\lambda+1)^{2n}}(\lambda-A)^{-1}\,d\lambda,
\]
according to the above. This definition does not depend on the choice of $n$ as far as \eqref{holom} holds.
A mapping
\[
\Phi_e: \mathcal{B}(\Sigma_\varphi) \mapsto \mathcal{L}(X),\quad \Phi_e(f)=f(A),
\]
is an algebra homomorphism, and it is called \emph{the extended holomorphic functional calculus} for $A$.

  Note that $\Phi_e$ formally depends on a choice of $\varphi$, but the calculi are consistent with an appropriate identification.
Thus we may consider the calculus to be defined on
\[
\mathcal{B}[\Sigma_\alpha]:=\bigcup_{\alpha<\varphi<\pi}\,\mathcal{B}(\Sigma_\varphi).
\]

It is important to note that in view of Theorem \ref{NP},(ii) if $\alpha \in [0,\theta_1)$ and $ f \in \mathcal{NP_+}(\theta_1,\theta_2)$ then any $f$ can be regularized by $\tau^2,$ and so $f(A)$ is defined in the extended
holomorphic functional calculus.

The extended holomorphic calculus is governed by usual calculi rules, see \cite[Section 2.3, 2.4]{Haa2006} for more on that.
The following properties of the calculus will be of particular importance for us.
\begin{prop}\label{fcrules}
\begin{itemize}
\item [(i)] If $f$ and $g$ belong to $\mathcal{B}[\Sigma_\alpha],$ then
the following sum rule and product rule hold:
$$f(A)+g(A)\subset (f+g)(A), \qquad
 f(A)g(A) \subset (fg)(A).
 $$
If $g(A)$ is bounded, then the inclusions
above turn into equalities.
\item [(ii)] Let $f \in \mathcal{B}[\Sigma_{\alpha'}]$ and $g \in \mathcal{B}[\Sigma_\alpha].$ Suppose in addition that $g(\Sigma_\alpha)\subset \Sigma_{\alpha'},$
$g(A)\in \Sect(\alpha'),$ and $g(A)$ is injective. Then
$f\circ g\in \mathcal{B}[\Sigma_\alpha],$ and the composition rule hold:
\begin{equation}\label{comprule}
(f\circ g)(A)=f(g(A)).
\end{equation}
\end{itemize}
\end{prop}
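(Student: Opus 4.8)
The plan is to derive all three rules from the single structural fact that $\Phi\colon H_0^\infty(\Sigma_\varphi)\to\mathcal{L}(X)$ is an algebra homomorphism, everything else being bookkeeping with the regularization identity \eqref{hfc}. Throughout I would fix $\varphi\in(\alpha,\pi)$ (and, for (ii), also $\varphi'\in(\alpha',\pi)$) large enough that every function that appears lies in $\mathcal{B}(\Sigma_\varphi)$, and use that for each such $h$ there is $N=N(h)\in\N$ with $\tau^{N}h\in H_0^\infty(\Sigma_\varphi)$; since $\tau^{N}(A)=[A(1+A)^{-2}]^{N}$ is injective, this says precisely that $h(A)x=y$ iff $(\tau^{N}h)(A)x=\tau^{N}(A)y$.

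For (i), I would pick one regularizer $e:=\tau^{N}$ serving $f$, $g$, $f+g$ simultaneously, with $e^{2}$ serving $fg$ (possible because $\tau$ absorbs polynomial growth and decay and $f,g\in\mathcal{B}(\Sigma_\varphi)$). If $x\in\dom(f(A))\cap\dom(g(A))$, then $e(A)f(A)x=(ef)(A)x$ and $e(A)g(A)x=(eg)(A)x$; adding and using linearity of $\Phi$ gives $e(A)\bigl(f(A)x+g(A)x\bigr)=(e(f+g))(A)x$, so $x\in\dom((f+g)(A))$ and $(f+g)(A)x=f(A)x+g(A)x$ by injectivity of $e(A)$. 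For the product, if $x\in\dom(g(A))$ with $g(A)x\in\dom(f(A))$, then using that the bounded operator $(ef)(A)$ commutes with $e(A)$ and with $g(A)$ on its domain, together with the homomorphism identity $(e^{2}fg)(A)=(ef)(A)(eg)(A)$, one gets $(e^{2}fg)(A)x=(ef)(A)e(A)g(A)x=e(A)^{2}f(A)g(A)x$, whence $x\in\dom((fg)(A))$ and $(fg)(A)x=f(A)\bigl(g(A)x\bigr)$. When $g(A)$ is bounded, $\dom(g(A))=X$, and I would close the gap by comparing domains: from $(e(f+g))(A)x=(ef)(A)x+e(A)g(A)x$ the left side lies in $\ran(e(A))$ exactly when $(ef)(A)x$ does, i.e. exactly when $x\in\dom(f(A))$; similarly $(e^{2}fg)(A)x=e(A)(ef)(A)g(A)x$ lies in $\ran(e(A)^{2})$ exactly when $g(A)x\in\dom(f(A))$. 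So the domains of the two sides coincide and the inclusions become equalities.

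For (ii), I would set $B:=g(A)\in\Sect(\alpha')$ (injective by hypothesis) and argue in two stages. First, for $f\in H_0^\infty(\Sigma_{\varphi'})$: choose $\beta\in(\alpha,\varphi)$ and $\beta'\in(\alpha',\varphi')$ with $g(\overline{\Sigma}_\beta)\subset\Sigma_{\beta'}$ (the existence of such a nesting, from the hypothesis $g(\Sigma_\alpha)\subset\Sigma_{\alpha'}$, being the one delicate geometric point). By the sum and inverse rules of (i), $(\mu-g)(A)=\mu-B$ and hence $(\mu-B)^{-1}=[(\mu-g(\cdot))^{-1}](A)$ for $\mu\notin\overline{\Sigma}_{\beta'}$; inserting the ($\tau$-regularized) Cauchy representation of the right-hand side into $f(B)=\tfrac{1}{2\pi i}\int_{\partial\Sigma_{\beta'}}f(\mu)(\mu-B)^{-1}\,d\mu$, interchanging the $\mu$- and $\lambda$-integrals, and applying Cauchy's formula $f(g(\lambda))=\tfrac{1}{2\pi i}\int_{\partial\Sigma_{\beta'}}f(\mu)(\mu-g(\lambda))^{-1}\,d\mu$ yields $f(B)=\tfrac{1}{2\pi i}\int_{\partial\Sigma_{\beta}}f(g(\lambda))(\lambda-A)^{-1}\,d\lambda=(f\circ g)(A)$. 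Second, for general $f\in\mathcal{B}[\Sigma_{\alpha'}]$, pick $n\in\N$ with $\tau_B^{n}f\in H_0^\infty(\Sigma_{\varphi'})$, where $\tau_B(\mu)=\mu(1+\mu)^{-2}$; using the sum rule with the constant $1$, the inverse rule and the product rule (all from (i)), one gets $\tau_B^{n}(B)=(\tau_B^{n}\circ g)(A)=B^{n}(1+B)^{-2n}$, which is injective because $B$ is, while the first stage applied to $\tau_B^{n}f$ gives $(\tau_B^{n}f)(B)=\bigl((\tau_B^{n}f)\circ g\bigr)(A)$. Since $(\tau_B^{n}f)\circ g=(\tau_B^{n}\circ g)\cdot(f\circ g)$,
\[
f(B)=\tau_B^{n}(B)^{-1}(\tau_B^{n}f)(B)=\bigl[(\tau_B^{n}\circ g)(A)\bigr]^{-1}\bigl[(\tau_B^{n}\circ g)\cdot(f\circ g)\bigr](A),
\]
which exhibits $\tau_B^{n}\circ g$ as a regularizer for $f\circ g$ in the $A$-calculus — so $f\circ g\in\mathcal{B}[\Sigma_\alpha]$ and $(f\circ g)(A)$ is defined — and at the same time proves $(f\circ g)(A)=f(g(A))$.

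The step I expect to be the main obstacle is the Fubini interchange in the first stage of (ii): one must verify absolute convergence of the resulting double integral, which rests on the decay of $f$ at $0$ and $\infty$, the $\tau$-regularization that makes $(\mu-g(\cdot))^{-1}$ admissible, the $O(|\mu|^{-1})$ decay of $(\mu-B)^{-1}$ coming from $B\in\Sect(\alpha')$, and the sectorial resolvent bound for $A$, together with the geometric lemma that the contours can be nested so that $g(\partial\Sigma_\beta)$ stays strictly inside $\partial\Sigma_{\beta'}$. Everything else is routine manipulation of the regularization identity and the Cauchy integral, and is carried out in full in \cite[Sections 2.3, 2.4]{Haa2006}.
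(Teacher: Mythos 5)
The paper contains no proof of Proposition \ref{fcrules}: it is quoted verbatim from the standard theory, with the reader referred to \cite[Sections 2.3, 2.4]{Haa2006}. Your sketch reconstructs precisely the arguments from that source — part (i) by choosing a common regularizer $\tau^{N}$ and using injectivity of $\tau^{N}(A)$, the equality statements by the range argument, and part (ii) by first proving the rule for $f\in H_0^\infty(\Sigma_{\varphi'})$ via $(\mu-g(A))^{-1}=[(\mu-g)^{-1}](A)$, a regularized Cauchy representation and Fubini, and then regularizing a general $f$ by $\tau_B^{n}$ — so in outline it is the proof the paper's citation points to, and it is sound.

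Three points are used silently and deserve to be made explicit if this were written out. First, the ``inverse rule'' $[(\mu-g)^{-1}](A)=(\mu-g(A))^{-1}$ is not contained in (i) as stated: the inclusion $h(A)(1/h)(A)\subset I$ does not by itself identify $(1/h)(A)$ with $h(A)^{-1}$, and one needs the separate (standard) regularization argument exploiting injectivity of $h(A)$. Second, your last display identifies $f(g(A))$ with $(f\circ g)(A)$ only modulo independence of the extended calculus of the choice of regularizer (you regularize by $\tau_B^{n}\circ g$, whereas the paper's definition \eqref{hfc} uses powers of $\tau$); moreover, exhibiting a regularizer shows that $(f\circ g)(A)$ is defined in Haase's sense but does not prove the growth claim $f\circ g\in\mathcal{B}[\Sigma_\alpha]$, which would require a polynomial lower bound on $|g|$ that is not among the hypotheses. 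Third, the nesting $g(\overline{\Sigma}_\beta)\subset\Sigma_{\beta'}$ for some $\beta>\alpha$, which you rightly single out as the delicate point, genuinely does not follow from the literal hypothesis $g(\Sigma_\alpha)\subset\Sigma_{\alpha'}$ (for instance $g$ could vanish on $\partial\Sigma_\alpha$, so that $f\circ g$ is not even defined on any larger sector); it is part of the intended hypotheses in \cite[Section 2.4]{Haa2006} and holds trivially in this paper's applications, where $g$ is the explicit M\"obius map $\lambda/(2-\lambda)$. The last two items are artifacts of the paper's loose paraphrase of Haase's composition rule rather than defects of your plan, but as written your argument proves a correctly hypothesized version of (ii), not the literal statement.
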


The importance of sectoriality angles is well-illustrated  by the
following classical statement on fractional powers of sectorial
operators relevant for our subsequent arguments.

\begin{prop}\label{PrBBL}
Let $\alpha\in [0,\pi)$ and $q>0$
be such that $q\alpha<\pi$. Then $A^q\in {\rm Sect}(q\alpha)$.
Moreover, there exists $M_q(A)>0$ such that for every $\epsilon
>0$
\begin{equation}\label{epapprox}
\|\lambda(\lambda+(A+\epsilon)^q)^{-1}\|\le M_q(A), \qquad \lambda \in (0,\infty).
\end{equation}
\end{prop}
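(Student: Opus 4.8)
The plan is to deduce both assertions from the holomorphic functional calculus for the sectorial operator $A$ together with the composition and product rules in Proposition~\ref{fcrules}. First I would settle the sectoriality claim $A^q\in\Sect(q\alpha)$. Since $\alpha\in[0,\pi)$, the function $g(\lambda)=\lambda^q$ lies in $\mathcal{B}[\Sigma_\alpha]$: it is holomorphic on each $\Sigma_\varphi$ with $\alpha<\varphi<\pi$ and satisfies $|g(\lambda)|=|\lambda|^q\le\max(|\lambda|^s,|\lambda|^{-s})$ for $s\ge q$. One checks that $g(A)=A^q$ agrees with the usual definition of the fractional power (using that $g$ is regularised by a power of $\tau$, or directly by the Balakrishnan representation). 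Because $g(\Sigma_\varphi)\subset\Sigma_{q\varphi}$, for any target angle $\omega\in(q\alpha,\pi)$ one may choose $\varphi\in(\alpha,\pi)$ with $q\varphi<\omega$, and then the Cauchy-type integral defining $(\lambda-A^q)^{-1}$ for $\lambda\notin\overline{\Sigma}_\omega$ can be estimated by deforming the contour to $\partial\Sigma_{q\varphi}$ and using the sectorial resolvent bound $\|z(z-A)^{-1}\|\le M(A,\varphi)$; a standard change of variables $z=\mu^{1/q}$ then yields $\|\lambda(\lambda-A^q)^{-1}\|\le M_\omega$ for $\lambda\notin\overline{\Sigma}_\omega$, so $A^q\in\Sect(q\alpha)$.

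Next I would prove the uniform bound \eqref{epapprox}. Fix $\epsilon>0$ and set $B_\epsilon:=A+\epsilon$. Since $A\in\Sect(\alpha)$ and $\epsilon>0$, the operator $B_\epsilon$ is again sectorial of angle $\alpha$ (shifting by a positive scalar does not increase the angle), and moreover $B_\epsilon$ is \emph{boundedly invertible}, with $0\in\rho(B_\epsilon)$. Hence $(B_\epsilon)^q\in\Sect(q\alpha)$ by the first part, and in particular, taking $\omega$ slightly less than $\pi$, we have $-(B_\epsilon)^q\in\Sect(q\alpha)$ so that $\lambda+(B_\epsilon)^q$ is invertible for all $\lambda>0$ with $\|\lambda(\lambda+(B_\epsilon)^q)^{-1}\|$ finite. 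The point is to make this bound \emph{independent of $\epsilon$}. For this I would use the contour representation
\[
\lambda(\lambda+(B_\epsilon)^q)^{-1}=\frac{1}{2\pi i}\int_{\partial\Sigma_{\varphi}}\frac{\lambda}{\lambda+z^q}\,(z-B_\epsilon)^{-1}\,dz,
\]
valid for a suitable $\varphi\in(\alpha,\pi/q)$ if $q\alpha<\pi$ (if $q\ge 1$ one first reduces to a contour where $z^q$ avoids $(-\infty,0]$; the case $q<1$ is easier), and then estimate the resolvent of $B_\epsilon$ uniformly in $\epsilon$. The crucial observation is that the sectorial resolvent estimate for $B_\epsilon=A+\epsilon$ can be taken uniform in $\epsilon\ge 0$: indeed for $z\notin\overline{\Sigma}_\varphi$ one has $(z-B_\epsilon)^{-1}=((z-\epsilon)-A)^{-1}$ and $z-\epsilon$ still lies outside $\overline{\Sigma}_\varphi$ with $|z-\epsilon|\ge c_\varphi|z|$ (the distance of $z$ to the positive real axis is unchanged by the shift), giving $\|(z-B_\epsilon)^{-1}\|\le M(A,\varphi)/\dist(z,\overline{\Sigma}_\varphi)\le C_\varphi/|z|$ with $C_\varphi$ independent of $\epsilon$. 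Feeding this into the contour integral and using $\int_{\partial\Sigma_\varphi}\frac{|\lambda|}{|\lambda+z^q|}\frac{|dz|}{|z|}\le M_q$ uniformly in $\lambda>0$ (a scaling argument: substitute $z\mapsto\lambda^{1/q}z$) yields \eqref{epapprox} with $M_q(A)$ depending only on $A$, $q$ and the angle, but not on $\epsilon$ or $\lambda$.

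The main obstacle I anticipate is handling the branch of $z\mapsto z^q$ along the contour when $q>1$, since then $z^q$ may wrap around the origin and one must choose the integration contour $\partial\Sigma_\varphi$ with $\varphi$ small enough that $q\varphi<\pi$ so that $z^q$ stays in $\C\setminus(-\infty,0]$ and $\lambda+z^q$ stays bounded away from zero for $\lambda>0$; this is exactly where the hypothesis $q\alpha<\pi$ is used, and it forces $\varphi\in(\alpha,\pi/q)$, a nonempty interval precisely under that hypothesis. The second delicate point is the uniformity in $\epsilon$: one must be careful that shifting by $\epsilon$ does not degrade the constant $C_\varphi$ in the resolvent bound, which is why I would phrase the estimate in terms of $\dist(z,[0,\infty))$ rather than $|z-1|$ or similar, since that distance is manifestly translation-invariant under positive shifts. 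Everything else is routine contour estimation and scaling.
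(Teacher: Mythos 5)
Your overall strategy is the same one the paper relies on implicitly (the paper does not prove this proposition at all, but refers to Haase's book: Proposition 3.1.2 for the sectoriality of $A^q$, and Corollary 3.1.3 together with Proposition 2.1.2(f) — i.e.\ uniformity of the sectorial constants under the shift $A\mapsto A+\epsilon$ — for \eqref{epapprox}). Your observation that $\|(z-\epsilon-A)^{-1}\|\le C_\varphi/|z|$ on $\partial\Sigma_\varphi$ with $C_\varphi$ independent of $\epsilon>0$ is correct (indeed $|z-\epsilon|\ge \sin(\varphi)\,|z|$ there), and this is precisely the ingredient that makes $M_q(A)$ independent of $\epsilon$.

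However, the key quantitative step of your argument for \eqref{epapprox} fails as written. The claimed bound $\int_{\partial\Sigma_\varphi}\frac{|\lambda|}{|\lambda+z^q|}\,\frac{|dz|}{|z|}\le M_q$ is false: after your substitution $z\mapsto\lambda^{1/q}z$ the integral becomes $\int_{\partial\Sigma_\varphi}\frac{|dz|}{|1+z^q|\,|z|}$, whose integrand behaves like $1/|z|$ near $z=0$, so the integral diverges logarithmically. The source of the problem is that the symbol $z\mapsto \lambda/(\lambda+z^q)$ equals $1$ at $z=0$ instead of vanishing, so it is not an $H_0^\infty$-function and the naive contour representation cannot be estimated by inserting the uniform bound $C_\varphi/|z|$ for the resolvent (for fixed $\epsilon$ the integral converges only because $\|(z-B_\epsilon)^{-1}\|$ stays bounded near $0$, with a bound of order $1/\epsilon$, which destroys the uniformity you need). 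The standard repair is a regularization: write
\[
\lambda(\lambda+B_\epsilon^q)^{-1}=h_\lambda(B_\epsilon)+(1+B_\epsilon)^{-1},\qquad h_\lambda(z):=\frac{\lambda}{\lambda+z^q}-\frac{1}{1+z}\in H_0^\infty(\Sigma_\varphi),
\]
estimate $h_\lambda(B_\epsilon)$ by the (now absolutely convergent) contour integral, and note $\|(1+B_\epsilon)^{-1}\|=\|(1+\epsilon+A)^{-1}\|\le M(A)$ uniformly in $\epsilon$; alternatively, and more cleanly, use the scaling identity $\lambda(\lambda+B^q)^{-1}=f(\lambda^{-1/q}B)$ with $f(w)=(1+w^q)^{-1}$ together with the invariance of the sectorial constants under $B\mapsto tB$, $t>0$, which reduces \eqref{epapprox} to a single functional-calculus bound for $f$. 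The same regularization issue is lurking in your sketch of the first assertion (the function $\lambda(\lambda-z^q)^{-1}$ also fails to vanish at $0$), though there you left the details vague. With this correction your argument is sound and coincides with the proof the paper cites.
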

For a proof of the first part of the proposition  see e.g.
\cite[Proposition 3.1.2]{Haa2006} or \cite[Corollary 3.10]{Berg}.
The estimate \eqref{epapprox} is a direct consequence of \cite[Corollary 3.1.3]{Haa2006}
and \cite[Proposition 2.1.2, f)]{Haa2006}.

Complete Bernstein functions introduced in Subsection \ref{funsec}
fall into the scope of the extended holomorphic functional
calculus. Moreover, such functions can be defined for any
sectorial operator regardless of its angle of sectoriality. Indeed,
every complete Bernstein function extends holomorphically to $\C
\setminus (-\infty,0],$ and  \eqref{Cbf} implies that it has a
sublinear growth in any sector $\Sigma_\alpha,$ $\alpha \in
[0,\pi).$ Identifying a complete Bernstein function with its
holomorphic extension to $\C \setminus (-\infty,0],$ we infer that
it belongs to the extended holomorphic functional calculus for any
sectorial operator $A.$ The definition \eqref{hfc} applies in this
case with $n=2.$ Moreover, the following operator analogue of
\eqref{Cbf} holds, see e.g. \cite[Theorem 3.12 and Section
3]{BaGoTa} for its discussion and proof (as well as for more
details on the holomorphic functional calculus of complete
Bernstein functions). Another approach to operator complete Bernstein functions can be found in
 \cite{Berg} and \cite{Schilling}.

\begin{thm}
Let a complete Bernstein function $\psi$ be given by its Stieltjes representation $(a,b,\mu)$
(see \eqref{Cbf}).
Then for every $x$ from the domain $\dom(A)$ of $A,$
\begin{equation}\label{OCbf}
\psi(A)x =a + bAx + \int_{(0,\infty)} A (A+s)^{-1} x \,\mu(ds).
\end{equation}
Moreover, $\dom(A)$ is a core for $\psi(A).$
\end{thm}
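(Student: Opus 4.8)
The plan is to identify the operator on the right-hand side of \eqref{OCbf} with $\psi(A)$ by checking the defining relation of the extended holomorphic calculus, and then to deduce the core statement by a resolvent approximation. Write $M(A):=\sup_{s>0}\|s(s+A)^{-1}\|$ (finite by \eqref{MQ}) and denote the right-hand side of \eqref{OCbf} by $Rx:=ax+bAx+\int_{(0,\infty)}A(A+s)^{-1}x\,\mu(ds)$, $x\in\dom(A)$. First I would check that $R$ is a well-defined linear operator on $\dom(A)$: for $x\in\dom(A)$ one has $A(A+s)^{-1}x=(A+s)^{-1}Ax$, so $\|A(A+s)^{-1}x\|\le(M(A)/s)\|Ax\|$, while also $\|A(A+s)^{-1}x\|=\|x-s(A+s)^{-1}x\|\le(1+M(A))\|x\|$; hence $\|A(A+s)^{-1}x\|\le C_x/(1+s)$ and the Bochner integral converges by \eqref{mu}. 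I would also reduce to injective $A$: in general one replaces $A$ by $A+\varepsilon$ ($\varepsilon>0$), which is injective and sectorial of the same angle with resolvent bounds uniform for $\varepsilon\in(0,1]$ off a fixed sector (cf.\ Proposition \ref{PrBBL}), proves \eqref{OCbf} for $A+\varepsilon$, and lets $\varepsilon\to0+$ using $(A+\varepsilon)x\to Ax$, $(A+\varepsilon)(A+\varepsilon+s)^{-1}x\to A(A+s)^{-1}x$ with an $\varepsilon$-uniform dominant, and the corresponding convergence $\psi(A+\varepsilon)x\to\psi(A)x$ (which follows from the Cauchy integral below and dominated convergence on the contour).

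The heart of the matter is the following. With $\tau(\lambda)=\lambda/(1+\lambda)^2$, formula \eqref{hfc} (with $n=2$) gives $\psi(A)=[\tau^2(A)]^{-1}(\tau^2\psi)(A)$, where $\tau^2(A)=A^2(1+A)^{-4}$ is bounded and injective and $(\tau^2\psi)(A)x=\frac{1}{2\pi\imun}\int_{\partial\Sigma_{\alpha_0}}\lambda^2(1+\lambda)^{-4}\psi(\lambda)(\lambda-A)^{-1}x\,d\lambda$, the integral converging absolutely since $\tau^2\psi\in H_0^\infty(\Sigma_\varphi)$. It therefore suffices to show that $(\tau^2\psi)(A)x=\tau^2(A)\,Rx$ for every $x\in\dom(A)$: once this is done, $(\tau^2\psi)(A)x\in\ran(\tau^2(A))$, hence $x\in\dom(\psi(A))$ and $\psi(A)x=[\tau^2(A)]^{-1}\tau^2(A)Rx=Rx$, which gives \eqref{OCbf} and $\dom(A)\subseteq\dom(\psi(A))$.

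To establish $(\tau^2\psi)(A)x=\tau^2(A)Rx$, I would insert the Stieltjes representation \eqref{Cbf} of $\psi$ into the Cauchy integral. The terms $a$ and $b\lambda$ split off by linearity and produce $a\,\tau^2(A)x$ and $b\,(\tau^2\cdot\mathrm{id})(A)x$. For the remaining term one interchanges the contour integral with $\int_{(0,\infty)}\cdots\mu(ds)$ by Fubini; this is legitimate because on $\partial\Sigma_{\alpha_0}$ one has $|\lambda+s|\ge c(|\lambda|+s)$, $\|(\lambda-A)^{-1}\|\le M'/|\lambda|$, $\int_{(0,\infty)}(|\lambda|+s)^{-1}\mu(ds)\le C_1+C_2/|\lambda|$ with $C_2=\mu(\{s<1\})<\infty$ (again by \eqref{mu}), and the factor $|\lambda|^2(1+|\lambda|)^{-4}$ is integrable along the contour. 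This yields $(\tau^2\psi)(A)x=a\,\tau^2(A)x+b\,(\tau^2\cdot\mathrm{id})(A)x+\int_{(0,\infty)}(\tau^2\cdot\tfrac{\mathrm{id}}{\mathrm{id}+s})(A)x\,\mu(ds)$. Now a direct computation with the Cauchy integral, together with the product rule of Proposition \ref{fcrules}(i), identifies $(\tau^2\cdot\mathrm{id})(A)=A^3(1+A)^{-4}=\tau^2(A)A$ on $\dom(A)$ and $(\tau^2\cdot\tfrac{\mathrm{id}}{\mathrm{id}+s})(A)=\tau^2(A)\,A(A+s)^{-1}$ (both factors bounded, with $\lambda/(\lambda+s)=1-s/(\lambda+s)$ giving $A(A+s)^{-1}$). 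Pulling the bounded operator $\tau^2(A)$ out of the sum and the integral gives $(\tau^2\psi)(A)x=\tau^2(A)Rx$, as desired.

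For the core statement, given $x\in\dom(\psi(A))$ I would set $x_n:=n(n+A)^{-1}x\in\dom(A)$; since $A$ is densely defined and sectorial, $x_n\to x$ in $X$, and since $n(n+A)^{-1}$ commutes with $\psi(A)$ (product rule), $\psi(A)x_n=n(n+A)^{-1}\psi(A)x\to\psi(A)x$, so $\dom(A)$ is a core for $\psi(A)$. The hard part will be the Fubini interchange in the previous paragraph — verifying absolute convergence of the double integral over $\partial\Sigma_{\alpha_0}\times(0,\infty)$ — which relies on the fact that \eqref{mu} forces $\mu$ to be finite near $0$, combined with the sectorial resolvent bound and the decay of $\tau^2$ on the contour; the secondary point is the reduction to injective $A$ via the uniform estimates of Proposition \ref{PrBBL}.
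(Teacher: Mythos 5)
There is no internal proof to compare against here: the paper states this result and refers for its proof to \cite{BaGoTa} (Theorem 3.12 and Section 3 there), so any correct argument is necessarily ``a different route from the paper''. Your blind proof is, in substance, correct and self-contained within the paper's extended holomorphic calculus: the bound $\|A(A+s)^{-1}x\|\le C_x/(1+s)$ together with \eqref{mu} makes the right-hand side of \eqref{OCbf} well defined on $\dom(A)$; regularizing by $\tau^2$ as in \eqref{hfc} (with $n=2$, as the paper prescribes), inserting the Stieltjes representation \eqref{Cbf} into the Cauchy integral for $(\tau^2\psi)(A)x$, and justifying Fubini via $|\lambda+s|\ge c(|\lambda|+s)$ on the contour (cf. Lemma \ref{sum}), the sectorial bound $\|(\lambda-A)^{-1}\|\le M/|\lambda|$, and the finiteness of $\mu$ near $0$ forced by \eqref{mu}, is a sound computation; the identifications $(\tau^2\cdot\mathrm{id})(A)=\tau^2(A)A$ on $\dom(A)$ and $(\tau^2\cdot\tfrac{\mathrm{id}}{\mathrm{id}+s})(A)=\tau^2(A)A(A+s)^{-1}$ follow from Proposition \ref{fcrules}, (i), and pulling the bounded operator $\tau^2(A)$ out of the Bochner integral then gives $(\tau^2\psi)(A)x=\tau^2(A)Rx$, hence $x\in\dom(\psi(A))$ and $\psi(A)x=Rx$ by injectivity of $\tau^2(A)$. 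The core argument via $x_n=n(n+A)^{-1}x$, using the commutation $n(n+A)^{-1}\psi(A)\subset\psi(A)n(n+A)^{-1}$ supplied by the product rule and the strong convergence $n(n+A)^{-1}\to I$ (valid since sectorial operators are densely defined in this paper), is also correct. One caveat: your reduction to injective $A$ via $A+\varepsilon$ is superfluous in this framework and, as written, slightly circular --- the extended calculus of the paper presupposes $\ker A=\{0\}$, so for non-injective $A$ the symbol $\psi(A)$ is not defined by \eqref{hfc} and the asserted convergence $\psi(A+\varepsilon)x\to\psi(A)x$ would require a different definition of $\psi(A)$ (e.g. the Hille--Phillips calculus the paper alludes to); either drop this step or make that alternative calculus explicit. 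This does not affect the validity of the main argument, which matches in spirit the product-formula machinery of \cite{BaGoTa} but is more elementary and tailored to complete Bernstein functions.
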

Note that a complete Bernstein function $\psi$ of $A$ can also defined in the framework of
other calculi, e.g. Hille-Phillips functional calculus,
where the assumption $\ker (A)=\{0\}$ is not, in fact, necessary. However, we will not
need this fact in the sequel.

\subsection{$\Wip$-calculus}

Now we turn to a discussion of another calculus tailored to deal
with power bounded operators rather than sectorial ones. If $T$ is
a power bounded operator on $X,$ then we can define  a
$\Wip(\D)$-functional calculus for $T$ which does not require
holomorphicity of functions on $\sigma(T)$ as in the case of  the
holomorphic functional calculus from the previous subsection. The
notion of the Hausdorff function will be crucial in this context.
We will show that the notion is just another face of the notion of
the complete Bernstein function explained in the previous
subsection.

Since $\Wip(\D)$ is a convolution Banach algebra, there is a very
natural way to define a function from $\Wip(\D)$ of $T.$
Namely, if $f(\lambda)=\sum_{n=0}^\infty c_n \lambda^n \in \Wip (\D)$  then we set
$$
f(T)=\sum_{n=0}^\infty c_n T^n.
$$
The mapping
$$
\Phi  : \Wip(\D) \mapsto \Lin(X), \qquad \Phi (f)=f(T),
$$
is a
continuous homomorphism of Banach algebras satisfying
$$
\|\Phi(f)\|\le\left( \sup_{n \ge 0}\|T^n\|\right)\|f\|_{\Wip(\D)}.
$$
It is called \emph{the $\Wip(\D)$-calculus for $T.$}

In what follows, we will need a spectral mapping theorem for
$\Wip(\D)$-calculus. This result can be found e.g. in
\cite[Theorem 2.1]{Dungey}.

\begin{prop}\label{connect}
Let $f \in \Wip (\D)$  and let $T$ be a
power-bounded operator on $X$. Then
\begin{equation}\label{h}
\quad \sigma(f(T))=f(\sigma(T)).
\end{equation}
\end{prop}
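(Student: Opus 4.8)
The plan is to prove the two inclusions $f(\sigma(T))\subseteq\sigma(f(T))$ and $\sigma(f(T))\subseteq f(\sigma(T))$ separately, by somewhat different arguments; only power boundedness of $T$ (which is what makes the $\Wip(\D)$-calculus available) enters.

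For the inclusion $f(\sigma(T))\subseteq\sigma(f(T))$ I would argue by polynomial approximation. Write $f(\lambda)=\sum_{n\ge0}c_n\lambda^n$ and set $p_N(\lambda)=\sum_{n=0}^N c_n\lambda^n$. Since $\|f-p_N\|_{\Wip(\D)}=\sum_{n>N}|c_n|\to0$, continuity of the $\Wip(\D)$-calculus gives $p_N(T)\to f(T)$ in $\Lin(X)$, while $p_N(\lambda_0)\to f(\lambda_0)$ for every $\lambda_0\in\overline{\D}$. For a polynomial $p$ the relation $p(\sigma(T))\subseteq\sigma(p(T))$ is elementary: from $p(\lambda)-p(\lambda_0)=(\lambda-\lambda_0)q(\lambda)$ one sees that $p(T)-p(\lambda_0)$ factors through $T-\lambda_0$ (which commutes with $q(T)$), hence is not invertible when $\lambda_0\in\sigma(T)$. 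Now fix $\lambda_0\in\sigma(T)$ and suppose $f(\lambda_0)\notin\sigma(f(T))$; choose $\varepsilon>0$ with $\overline{B(f(\lambda_0),\varepsilon)}\subseteq\C\setminus\sigma(f(T))$ and put $M_1:=\sup\{\|(z-f(T))^{-1}\|:z\in\overline{B(f(\lambda_0),\varepsilon)}\}<\infty$. For $N$ large we have $|p_N(\lambda_0)-f(\lambda_0)|<\varepsilon$ and $\|p_N(T)-f(T)\|<(2M_1)^{-1}$, so writing $p_N(T)-p_N(\lambda_0)=(f(T)-p_N(\lambda_0))\bigl(I+(f(T)-p_N(\lambda_0))^{-1}(p_N(T)-f(T))\bigr)$ and expanding the second factor as a Neumann series shows that $p_N(T)-p_N(\lambda_0)$ is invertible, contradicting $p_N(\lambda_0)\in\sigma(p_N(T))$. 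Hence $f(\lambda_0)\in\sigma(f(T))$.

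For the reverse inclusion $\sigma(f(T))\subseteq f(\sigma(T))$ I would pass to a commutative Banach algebra. Let $\mathcal{B}\subseteq\Lin(X)$ be the closed unital subalgebra generated by $T$ together with all resolvents $(\lambda-T)^{-1}$, $\lambda\in\C\setminus\sigma(T)$; these operators pairwise commute, so $\mathcal{B}$ is commutative. Since every resolvent of $T$ lies in $\mathcal{B}$, the spectrum of $T$ computed in $\mathcal{B}$ coincides with $\sigma(T)$, and since $f(T)=\sum_n c_n T^n$ is a norm limit of polynomials in $T$, also $f(T)\in\mathcal{B}$. For any character $\chi$ of $\mathcal{B}$ one has $\|\chi\|\le1$, hence $z_\chi:=\chi(T)$ belongs to the $\mathcal{B}$-spectrum of $T$, i.e. $z_\chi\in\sigma(T)\subseteq\overline{\D}$; by continuity of $\chi$ and absolute summability of $(c_n)$, $\chi(f(T))=\sum_n c_n z_\chi^{\,n}=f(z_\chi)$. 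Since the range of the Gelfand transform of an element equals its spectrum in $\mathcal{B}$, applying this to $T$ gives $\{z_\chi\}=\sigma(T)$, and applying it to $f(T)$ shows that the $\mathcal{B}$-spectrum of $f(T)$ equals $\{f(z_\chi)\}=f(\sigma(T))$. As $\sigma(f(T))$ is contained in the $\mathcal{B}$-spectrum of $f(T)$, we get $\sigma(f(T))\subseteq f(\sigma(T))$, and combined with the first inclusion this yields $\sigma(f(T))=f(\sigma(T))$.

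I expect the reverse inclusion to be the delicate point. It is tempting to work in the closed algebra generated by $T$ alone, but that algebra can strictly enlarge $\sigma(T)$ by filling in the bounded components of $\C\setminus\sigma(T)$ — already visibly for $f=\mathrm{id}$ — so it is essential to adjoin the resolvents of $T$, forcing the $\mathcal{B}$-spectrum of $T$ to be exactly $\sigma(T)$. Equivalently, the genuinely delicate feature is controlling $f(T)-\mu$ near the part of $\sigma(T)$ lying on the unit circle, where $f$ need not extend holomorphically and naive contour-integral or scaling arguments (replacing $T$ by $rT$ with $r<1$) break down; the Gelfand-theoretic argument bypasses this cleanly.
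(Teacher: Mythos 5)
Your argument is correct, but it takes a different route from the one the paper relies on. The paper does not prove Proposition \ref{connect} itself: it cites Dungey's Theorem 2.1, and the remark following the proposition explains that that proof rests on Kato's perturbation inequality \eqref{kato}, ${\rm dist}(\sigma(T_1),\sigma(T_2))\le\|T_1-T_2\|$ for commuting operators. Concretely, one approximates $f$ by its Taylor partial sums $p_N$, uses the polynomial spectral mapping theorem $\sigma(p_N(T))=p_N(\sigma(T))$, and then lets $N\to\infty$: since $\|p_N(T)-f(T)\|\to 0$ and $p_N\to f$ uniformly on $\overline{\D}$, the Hausdorff-distance estimate forces $\sigma(f(T))$ and $f(\sigma(T))$ to coincide, yielding both inclusions simultaneously. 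You instead split the identity: the easy inclusion $f(\sigma(T))\subseteq\sigma(f(T))$ you obtain by the same polynomial approximation but with a Neumann-series stability argument (which is fine, and is essentially a hands-on version of the same perturbation idea), while the delicate inclusion $\sigma(f(T))\subseteq f(\sigma(T))$ you get from Gelfand theory in the commutative unital algebra generated by $T$ together with its resolvents, so that spectral permanence gives $\sigma_{\mathcal B}(T)=\sigma(T)$ and characters evaluate $f(T)$ to $f(\chi(T))$. Your observation that adjoining the resolvents is essential (the algebra generated by $T$ alone fills in holes of the spectrum) is exactly the right precaution. The trade-off: the Kato-based route is quantitative and treats both inclusions symmetrically with one elementary estimate, whereas your Gelfand argument avoids the perturbation inequality altogether and isolates cleanly why the boundary part of $\sigma(T)$ on the unit circle, where $f$ is not holomorphic, causes no trouble. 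Both proofs are complete and correct.
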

\begin{remark}
Recall that
if $T_1$ and $T_2$ are commuting bounded operators on $X$ then
\begin{equation}\label{kato}
{\rm dist}(\sigma(T_1),\sigma(T_2))\le \|T_1-T_2\|,
\end{equation}
where ${\rm dist}(\sigma(T_1),\sigma(T_2))$ stands for the Hausdorff distance between $\sigma(T_1)$ and $\sigma(T_2).$
(See e.g. \cite[Theorem IV.3.6]{Kato}.) The proof of Proposition \ref{connect} given in \cite{Dungey} is based on this result, and the result
 will also be useful in the sequel.
\end{remark}

It is important to observe that since $\Wip(\D)$ includes regular
Hausdorff functions, a  Hausdorff function $h$ of a power bounded
operator $T$ is well-defined in the $\Wip(\D)$-calculus. Moreover,
for $h \sim (c_0, \nu)$ one can prove the operator counterpart of
\eqref{fdis1}:
$$
h(T)=c_0+\int_{[0,1)} T(1-tT)^{-1} \, \nu(dt).
$$
As we will not use this formula in the following, its proof is omitted.

Since we will use the two functional calculi, namely the extended
holomorphic functional calculus and $\Wip(\D)$-calculus, a natural
question is whether these calculi are consistent. To clarify this
issue, note that if $T$ is power bounded, then $1-T$ is sectorial
and by \cite[Proposition 3.2]{HaTo10} the $\Wip(\D)$ -calculus
agrees with the holomorphic functional calculus for sectorial
operators in a sense that for appropriate holomorphic $f$ one has
$f(A)=g(T)$ where $A:=1-T$ has dense range and $g(\lambda)=f(1-\lambda).$

Moreover, if $h\sim (c_0,\nu)$ is a regular Hausdorff function,
then $\psi(\lambda):=1-h(1-\lambda)\in \mathcal{CBF}$ by Proposition
\ref{prop1}. Thus, $\psi(A)$ is defined by the extended
holomorphic functional calculus. On the other hand, $h(T)$ can be
defined by the $\Wip(\D)$-calculus. In view of  the next result
proved in \cite[Proposition 3.2]{HaTo10} and formulated for a
future reference, the two calculi are consistent and lead to the
same operator.
\begin{lemma}\label{HaTol}
Let $h$ be a regular Hausdorff function and let $\psi(\lambda)=1-h(1-\lambda)$
be the corresponding complete Bernstein function given by
Proposition \ref{prop1}. If $T$ is a power bounded operator on $X$
such that $\cls{\ran}\,(1-T)=X,$ then
\[1-h(T)=\psi(A), \qquad A:=1-T, \]
where $h(T)$ is defined by means of the $\Wip (\mathbb
D)$-calculus and $\psi(A)$ is defined by the extended holomorphic
functional calculus.
\end{lemma}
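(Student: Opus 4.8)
The plan is to reduce Lemma \ref{HaTol} to the abstract consistency statement \cite[Proposition 3.2]{HaTo10} that was already quoted in the text, namely that for a power bounded $T$ with $\cls{\ran}(1-T)=X$ the $\Wip(\D)$-calculus for $T$ and the extended holomorphic functional calculus for $A:=1-T$ agree: whenever $g\in\Wip(\D)$ and $f(\lambda):=g(1-\lambda)$ lies in the relevant holomorphic class for $A$, one has $g(T)=f(A)$. So the whole point is to identify what $1-h(T)$ is as a function of $T$ in $\Wip(\D)$ and to check that the corresponding half-plane function is precisely $\psi$.

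First I would observe that $h$ is a regular Hausdorff function, hence $h\in\Wip(\D)$ with $\|h\|_{\Wip(\D)}=h(1)=1$ by \eqref{disc}; consequently $g(\lambda):=1-h(\lambda)$ also belongs to $\Wip(\D)$ (the constant $1$ is in $\Wip(\D)$ and the algebra is linear), and by the $\Wip(\D)$-calculus $g(T)=1-h(T)$, so $1-h(T)$ is a genuine bounded operator given by a $\Wip(\D)$-function of $T$. Next, set $f(\lambda):=g(1-\lambda)=1-h(1-\lambda)$. By Proposition \ref{prop1}, $f=\psi$ extends to a complete Bernstein function, in fact $\psi\sim(0,b,\mu)$ with $b=\nu(\{0\})$ and $\mu$ as described there; in particular $\psi\in\mathcal{CBF}\subset\mathcal{NP_+}$, it has sublinear growth on every sector, and as discussed in Subsection \ref{hfcsub} it is regularizable by $\tau^2$, so $\psi(A)$ is well defined in the extended holomorphic functional calculus for the sectorial operator $A=1-T$ (which is sectorial of angle $\le\pi/2$ and, by the density assumption $\cls{\ran}(1-T)=X$, has dense range, hence is injective when restricted suitably — or one works with the natural regularization regardless of injectivity, exactly as in \cite{HaTo10}).

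The remaining step is to invoke \cite[Proposition 3.2]{HaTo10} with the pair $(g,f)$: since $g\in\Wip(\D)$ and $f(\lambda)=g(1-\lambda)$ is (the restriction to a sector of) a complete Bernstein function, the hypotheses of that proposition are met, and it yields $g(T)=f(A)$, i.e.
\[
1-h(T)=g(T)=f(A)=\psi(A),\qquad A:=1-T,
\]
which is exactly the assertion of the lemma. The main thing to be careful about is the matching of the two ``change of variable'' conventions: the $\Wip(\D)\to\mathcal{CBF}$ correspondence used here is $g(\lambda)\leftrightarrow g(1-\lambda)$, and one must check this is literally the substitution under which \cite[Proposition 3.2]{HaTo10} asserts consistency, and that the function $1-h$ (rather than $h$ itself) is the one that maps to the sectorial-calculus side; modulo this bookkeeping, together with the already-established facts that $h\in\Wip(\D)$ and that $\psi=1-h(1-\cdot)\in\mathcal{CBF}$ is regularizable, the proof is immediate. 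The only real obstacle is thus making sure the normalization/constant terms ($c_0$, the value at $1$, the $a=0$ in the Stieltjes triple) line up so that the identity holds without an additive constant discrepancy; Proposition \ref{prop1} and \eqref{disc} are precisely what guarantee this.
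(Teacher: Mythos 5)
Your proposal is correct and takes essentially the same route as the paper: the paper proves the lemma precisely by invoking the consistency result of \cite[Proposition 3.2]{HaTo10} after identifying $1-h(T)$ with the $\Wip(\D)$-function $1-h$ of $T$ and $\psi=1-h(1-\cdot)$ with the corresponding complete Bernstein function from Proposition \ref{prop1}. The only point you hedge on, injectivity of $A=1-T$, is settled exactly as the paper's remark indicates: the mean ergodic theorem gives $\ker(1-T)=\{0\}$ from the hypothesis $\cls{\ran}\,(1-T)=X$.
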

\begin{remark}
Recall that, by the mean ergodic theorem, if $T$ is a power bounded
operator on $X$ and  $\cls{\ran}\,(1-T)=X,$ then $\ker
(1-T)=\{0\}.$
\end{remark}
\begin{remark}
Using the approach of Subsection \ref{hfcsub}, one may also define
the extended $\Wip(\D)$-calculus.  In this way, the extended
$\Wip(\D)$-calculus for $T$ comprises more general Hausdorff
functions of $T.$ However our arguments will not require this
generalization.
\end{remark}

\section{$\Wip$-functions of Ritt operators}

We now turn to deriving estimates for  $(z+f)^{-1}(A)$ where
$f=h\circ \mathcal{C} \in \mathcal{NP}_+$ is a convex power series
$h$ of the Cayley transform $\mathcal{C}$ and $A$ is a sectorial
and \emph{bounded} operator on $X$. We start with obtaining an
integral representation for the ``resolvent'' function
$(z+f)^{-1}.$ This representation will lead to  a similar
representation for $(z+f)^{-1}(A)$, and eventually to the
("uniform") sectoriality of $f(A).$ Finally, if $T$ is Ritt and
$A={\mathcal C}(T)$, then the sectoriality of $f(A)$ with an
appropriate angle will imply that $h(T)$ is Ritt.

It is also important to note that our arguments depend
essentially on a convergence of certain approximations of Ritt and
power bounded operators. Thus all constants in the resolvent
bounds given below have been written explicitly so to reveal their
uniformity with respect to approximation and to keep control over
the convergence issues.
\begin{lemma}\label{In1}
Let $f\in \mathcal{NP}_+(\theta_1,\theta_2).$
If
\[
\alpha\in (0,\theta_1), \quad  q\in \left(\pi/\theta_1,\pi/\alpha\right) \quad
\text{and}\quad  \gamma \in \left(0,
\pi\left(1-\frac{\theta_2}{q\theta_1}\right)\right),
\]
then for every $R>0,$
\begin{align}
(z+f(\lambda))^{-1}
=&\frac{q}{\pi}\int_0^{R^{1/q}}
\frac{{\rm Im}\,f(te^{i\pi/q})\,t^{q-1}\,dt}
{(z+f(te^{i\pi/q}))(z+f(te^{-i\pi/q}))(t^q+\lambda^q)}\label{R11}\\
+&\frac{1}{2\pi i}\int_{|\xi|=R}\,
\frac{d\xi}{(z+f(\xi^{1/q}))(\xi-\lambda^q)},\notag
\end{align}
for all $\lambda\in \Sigma_\alpha$, $|\lambda| < R^{1/q},$ and
$z\in \Sigma_\gamma$.
\end{lemma}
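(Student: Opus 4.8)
The plan is to establish \eqref{R11} as a residue identity for a suitable auxiliary function of one complex variable. Put $w:=\lambda^q$. Since $|\arg\lambda|<\alpha$ and $q\alpha<\pi$, the point $w$ lies in $\C\setminus(-\infty,0]$ with $|w|=|\lambda|^q<R$, and, because $\alpha<\pi/q$, the principal $q$-th root of $w$ equals $\lambda$. I would then study
\[
G(\xi):=\frac{1}{(z+f(\xi^{1/q}))(\xi-w)},\qquad \xi\in\C\setminus(-\infty,0],
\]
where $\xi^{1/q}$ is the principal root. As $q>\pi/\theta_1$, i.e.\ $\pi/q<\theta_1$, the principal root maps $\C\setminus(-\infty,0]$ into $\Sigma_{\pi/q}$, so $\xi\mapsto f(\xi^{1/q})$ is holomorphic there, and Corollary \ref{NPabsd} applied with $\theta=\pi/q$ gives $f(\xi^{1/q})\in\overline{\Sigma}_{\pi\theta_2/(q\theta_1)}$ for all $\xi\in\C\setminus(-\infty,0]$.

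The key analytic point is that $z+f(\xi^{1/q})$ never vanishes on the cut plane for $z\in\Sigma_\gamma$, and this is exactly where the hypothesis $\gamma<\pi(1-\theta_2/(q\theta_1))$ is used: if $f(\xi^{1/q})=0$ the sum equals $z\neq0$, and otherwise $z$ and $f(\xi^{1/q})$ are nonzero with arguments differing by at most $\gamma+\pi\theta_2/(q\theta_1)<\pi$, so the sum is nonzero; in fact $|z+f(\xi^{1/q})|$ is then bounded below by a positive constant depending only on $z$ and on $\gamma,\theta_1,\theta_2,q$, uniformly in $\xi$. Consequently $G$ is holomorphic on $\C\setminus((-\infty,0]\cup\{w\})$ with a single simple pole at $\xi=w$, whose residue is $(z+f(w^{1/q}))^{-1}=(z+f(\lambda))^{-1}$.

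Next I would apply the residue theorem to $G$ on the keyhole domain $\Omega_\varepsilon:=\{\varepsilon<|\xi|<R\}\setminus(-\infty,0]$, which is simply connected, contains $w$ for small $\varepsilon$, and to whose two sides of the slit $G$ extends continuously. Its positively oriented boundary is the circle $|\xi|=R$ (counterclockwise), the two lips of $[-R,-\varepsilon]$, and the circle $|\xi|=\varepsilon$ (clockwise); the inner-circle integral is $O(\varepsilon)$ by the uniform lower bound above and $|\xi-w|\ge|w|-\varepsilon$, so it drops out as $\varepsilon\to0$. On the upper lip $\xi=te^{i\pi}$ one has $\xi^{1/q}=t^{1/q}e^{i\pi/q}$, and on the lower lip $\xi=te^{-i\pi}$ one has $\xi^{1/q}=t^{1/q}e^{-i\pi/q}$; since $f$ is holomorphic in $\Sigma_{\theta_1}\supset\Sigma_{\pi/q}$ and real on $(0,\infty)$, the reflection principle yields $f(t^{1/q}e^{-i\pi/q})=\overline{f(t^{1/q}e^{i\pi/q})}$. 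Adding the two oppositely oriented lip integrals and using $\frac1{z+\overline a}-\frac1{z+a}=\frac{2i\,\im a}{(z+a)(z+\overline a)}$ with $a=f(t^{1/q}e^{i\pi/q})$ converts their sum into
\[
\int_0^R\frac{2i\,\im f(t^{1/q}e^{i\pi/q})\,dt}{(z+f(t^{1/q}e^{i\pi/q}))(z+f(t^{1/q}e^{-i\pi/q}))(t+\lambda^q)}.
\]
Thus $2\pi i\,(z+f(\lambda))^{-1}$ equals $\int_{|\xi|=R}G(\xi)\,d\xi$ plus this last integral; dividing by $2\pi i$ and substituting $t=s^{q}$ (so $t^{1/q}=s$, $t+\lambda^q=s^q+\lambda^q$, $dt=qs^{q-1}\,ds$, $s\in(0,R^{1/q})$) yields precisely \eqref{R11}.

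I expect the only real difficulty to be the technical bookkeeping: choosing the keyhole contour with the correct orientations, justifying the passage $\varepsilon\to0$, and checking that $G$ is continuous up to each lip of the cut. The substantive input is the pair of facts $f(\xi^{1/q})\in\overline{\Sigma}_{\pi\theta_2/(q\theta_1)}$ (Corollary \ref{NPabsd}) and the restriction on $\gamma$, which together guarantee that $z+f(\xi^{1/q})$ stays away from zero on all of $\C\setminus(-\infty,0]$ and hence that $w$ is the unique pole of $G$ enclosed by the contour.
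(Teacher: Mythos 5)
Your argument is correct and is essentially the paper's own proof in slightly different clothing: the paper writes $(z+f(\lambda))^{-1}$ as a Cauchy integral of $\bigl((z+f(\xi^{1/q}))(\xi-\lambda^q)\bigr)^{-1}$ over the boundary of a truncated sector $\Sigma_\beta(R)$, $\beta\in(q\alpha,\pi)$, and then deforms the rays onto the negative semi-axis, pairing conjugate boundary values exactly as you do to produce the $\operatorname{Im} f$ term. Your keyhole/residue formulation with the $\varepsilon$-circle and the substitution $t=s^{q}$ reproduces the same computation, with the same use of Corollary \ref{NPabsd} and the condition $\gamma+\pi\theta_2/(q\theta_1)<\pi$ to keep $z+f(\xi^{1/q})$ away from zero, so nothing further is needed.
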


\begin{proof}
Let $\alpha\in (0,\theta_1),$  $q\in (\pi/\theta_1,\pi/\alpha)$  and $R>0$
be fixed.
By Corollary \ref{NPabsd}, for every $\beta\in (q\alpha,\pi)$ and all nonzero
  $\lambda\in \Sigma_\alpha$ and $\xi\in
\partial \Sigma_\beta,$
\begin{equation}\label{chi1}
f(\lambda)\in \overline{\Sigma}_{\alpha\theta_2/\theta_1} \quad
\text{and} \quad
f(\xi^{1/q})\in
\overline{\Sigma}_{\beta\theta_2/(q\theta_1)}.
\end{equation}
Note also that if $\lambda\in \Sigma_\alpha,$ and $z\in
\Sigma_\gamma,$ where $\gamma\in (0,\pi(1-\theta_2/(q\theta_1)))$,
then
\[
\alpha\theta_2/\theta_1+\gamma< \pi \quad \text{and} \quad \gamma+\beta\theta_2/(q\theta_1)<\pi.
\]
Now, by Cauchy's theorem, for every  $R>|\lambda|^q,$
\begin{equation}\label{Ca}
(z+f(\lambda))^{-1}=
\frac{1}{2\pi i}\int_{\partial \Sigma_\beta(R)}\,
\frac{d\xi}{(z+f(\xi^{1/q}))(\xi-\lambda^q)},
\end{equation}
where  $\Sigma_\beta(R):=\Sigma_\beta \cap \{z\in \mathbb C: |z|=R \}.$
Deforming the contour $\Sigma_\beta(R)$ in (\ref{Ca})  to the negative semi-axis, we obtain
\begin{align*}
(z+f(\lambda))^{-1}=&\frac{1}{2\pi i}
\int_{|\xi|=R}\,\frac{d\xi}{(z+f(\xi^{1/q}))(\xi-\lambda^q)}\label{RR}
\\
-&\frac{1}{2\pi i}\int_0^R\,
\frac{ds}{(z+f(s^{1/q}e^{i\pi/q}))(s+\lambda^q)}\notag \\
+&\frac{1}{2\pi i}
\int_0^R\,\frac{ds}{(z+f(s^{1/q}e^{-i\pi/q}))(s+\lambda^q)}\notag\\
=&\frac{1}{2\pi i}\int_{|\xi|=R}\,
\frac{d\xi}{(z+f(\xi^{1/q}))(\xi-\lambda^q)}\notag\\
+&\frac{1}{\pi}\int_0^R\,
\frac{{\rm Im}\,f(s^{1/q}e^{i\pi/q})\,ds}
{(z+f(s^{1/q}e^{i\pi/q}))(z+f(s^{1/q}e^{-i\pi/q}))\notag
(s+\lambda^q)},
\end{align*}
and \eqref{R11} follows.
\end{proof}

The above lemma is in fact the heart of our strategy. Using
the representation \eqref{Ca} containing $f(\xi^{1/q})$ rather
than $f(\xi)$ we are able to deform the integration contour to the
negative half-axis so that to pass to the formula \eqref{R11}
containing ${\rm Im} \, f(te^{i\pi/q}).$ In turn, this latter term
${\rm Im} \, f(te^{i\pi/q}),$ for certain $f \in \mathcal{NP_+},$
allows useful estimates, e.g the one given by Lemma \ref{chi} from
Appendix A. Lemma \ref{chi} provides a way to cancel
singularity of the integrand in \eqref{R11} at $t=0$ and thus helps to
show that the  integral  \eqref{R11} converges absolutely. This is
the key point in obtaining resolvent bounds in Theorem \ref{mainT}
below.

Next, using the preceding result and Theorem \ref{NP}, we prove the sectoriality
of $f(A)$ if
$f \in \mathcal{NP_+}(\theta_1,\theta_2)\cap {\mathcal D}_{\theta_1}(0,a)$ for some $a >0.$ The proof
is based on the integral
representation for the resolvent of $f(A)$. The representation yields
the sectoriality of $f(A)$ by means of Theorem \ref{NP} and bounds on ${\rm Im}\, f$
contained in the definition of ${\mathcal D}_{\theta_1}(0,a).$
 Moreover, we  give a very
explicit bound for the resolvent of $f(A).$ It is important for
subsequent approximation arguments where a certain uniformity of resolvent estimates is required.

On
this way the following lemma from \cite[Appendix B, Proposition
B5]{ABHN01} will also be crucial.
\begin{lemma}\label{resolvent}
Let $A$ be a closed densely defined operator on $X,$ and $U$ be a
connected open subset of $\mathbb C$. Suppose that $U \cap\rho(A)$
is nonempty and that there is a holomorphic function $F : U \to
{\mathcal L}(X)$ such that $\{z \in U \cap\rho(A): F(z) =
(z-A)^{-1}\}$ has a limit point in $U.$ Then $U \subset \rho(A)$
and $F(z) = (z-A)^{-1}$ for all $z \in U.$ \end{lemma}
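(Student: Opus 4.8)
The plan is to run a connectedness argument on $U$, based on the identity theorem together with the blow-up of the resolvent norm near $\sigma(A)$. If $\sigma(A)=\emptyset$ then $U\subset\rho(A)$ trivially, and applying the identity theorem for holomorphic $\mathcal{L}(X)$-valued functions to $F-(\cdot-A)^{-1}$ on the connected set $U$ finishes the proof; so assume $\sigma(A)\neq\emptyset$. Write $S:=\{z\in U\cap\rho(A):F(z)=(z-A)^{-1}\}$, fix a limit point $p\in U$ of $S$, and pick $z_n\in S\setminus\{p\}$ with $z_n\to p$. The first step is to show $p\in\rho(A)$: since $F$ is holomorphic (hence continuous) at $p$, the norms $\|F(z_n)\|=\|(z_n-A)^{-1}\|$ are bounded, say by $M$, and the Neumann series expansion of the resolvent around $z_n$ gives $\dist(z_n,\sigma(A))\ge\|(z_n-A)^{-1}\|^{-1}\ge M^{-1}$; letting $n\to\infty$ yields $\dist(p,\sigma(A))\ge M^{-1}>0$, so $p\in\rho(A)$.

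Next, set $\Omega:=U\cap\rho(A)$, an open set, and let $\Omega_0$ be its connected component containing $p$. On $\Omega_0$ both $F$ and $z\mapsto(z-A)^{-1}$ are holomorphic, and they agree on $S\cap\Omega_0$, which still has $p$ as a limit point because $z_n\in\Omega_0$ for all large $n$ ($\Omega_0$ being open). The identity theorem for $\mathcal{L}(X)$-valued holomorphic functions then forces $F(z)=(z-A)^{-1}$ for every $z\in\Omega_0$.

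It remains to prove $\Omega_0=U$. Since $\Omega_0$ is nonempty and open, by connectedness of $U$ it suffices to show $\Omega_0$ is closed in $U$. Let $q$ be in the closure of $\Omega_0$ with $q\in U$, and choose $w_n\in\Omega_0$ with $w_n\to q$. If $q\in\sigma(A)$, then exactly as above $\|(w_n-A)^{-1}\|\to\infty$, contradicting $F(w_n)=(w_n-A)^{-1}\to F(q)$, which holds by continuity of $F$ at $q$; hence $q\in\rho(A)$, so $q$ belongs to some connected component of $\Omega$. That component is open and is a neighbourhood of $q\in\overline{\Omega_0}$, hence it meets $\Omega_0$, hence equals $\Omega_0$, so $q\in\Omega_0$. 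Therefore $\Omega_0=U$, which gives $U\subset\rho(A)$ and $F=(\cdot-A)^{-1}$ on $U$. I expect no genuine obstacle: the whole argument is bookkeeping resting on two routine facts, namely $\dist(z,\sigma(A))\le\|(z-A)^{-1}\|^{-1}$ (used to trap the relevant limit points in $\rho(A)$) and the openness of connected components of open subsets of $\C$.
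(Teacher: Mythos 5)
Your proof is correct: the key estimate $\dist(z,\sigma(A))\ge \|(z-A)^{-1}\|^{-1}$ (so that local boundedness of $F$ traps limit points of the coincidence set inside $\rho(A)$), the operator-valued identity theorem, and the open-and-closed connectedness argument together give exactly the stated conclusion, and no step fails. The paper itself offers no proof — it quotes the lemma from \cite[Appendix B, Proposition B.5]{ABHN01} — and the standard proof given there is essentially the same argument as yours, so there is nothing further to compare.
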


\begin{thm}\label{mainT}
Let $f\in \mathcal{NP_+}(\theta_1,\theta_2)\cap {\mathcal
D}_{\theta_1}(0,a)$ for some $a >0.$ Let $A \in {\mathcal L}(X)$
be such that $A\in \Sect(\alpha)$, $\alpha\in [0,\theta_1),$ and
$\ker{A}=\{0\}$. Then
\[
f(A)\in \Sect(\tilde{\alpha}),\quad \tilde{\alpha}=\frac{\theta_2}{\theta_1}\cdot\alpha.
\]
Moreover,  for all
$q\in \left(\pi/\theta_1,\pi/\alpha\right)$ and
$\gamma \in \left(0, \pi\left(1-\frac{\theta_2}{q\theta_1}\right)\right)$,
one has
\begin{equation}\label{family}
\|(z + f(A))^{-1}\|\le \frac{c_{q,\gamma}}{|z|},\qquad z\in \Sigma_\gamma,
\end{equation}
where
\begin{equation}\label{constant}
c_{q,\gamma}=
\frac{q M_q(A)m(\pi/q)}{Cb\pi \cos^2((\pi/q+\gamma)/2)}
+\frac{2}{\cos((\pi/q+\gamma)/2)},
\end{equation}
with $M_q(A)$ given by  \eqref{epapprox}, $b=b(\pi/q, 2\|A\|)$ and
$m=m(\pi/q)$ corresponding to $f$ by the definition of
$\mathcal{D}_{\theta_1}(0,a),$ and finally $
C=b^{\theta_2/\theta_1}[\cos (\pi^2/(2\theta_1
q))]^{2\theta_2/\pi}. $
\end{thm}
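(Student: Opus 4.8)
The plan is to transplant the scalar resolvent identity of Lemma~\ref{In1} into the holomorphic functional calculus of $A^q$ and then estimate the two resulting operator integrals. First I would make two harmless reductions. Since $\Sect(0)=\bigcap_{\omega>0}\Sect(\omega)$ it suffices to treat $\alpha>0$; and since, by~\eqref{epapprox}, $M_q(A)$ dominates the sectoriality constants of all the $(A+\epsilon)^q$ simultaneously, it suffices to prove~\eqref{family} for $A+\epsilon$ and then let $\epsilon\to0$ along strong resolvent convergence $f(A+\epsilon)\to f(A)$. So assume in addition that $A$ is invertible; then $\sigma(A)\subset\Sigma_\alpha$ (open) and $\sigma(A^q)\subset\overline{\Sigma}_{q\alpha}$ stays away from $(-\infty,0]$. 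Fix $q\in(\pi/\theta_1,\pi/\alpha)$, $\gamma\in(0,\pi(1-\theta_2/(q\theta_1)))$, and $R:=(2\|A\|)^q$, so that $\sigma(A)\subset\{|\lambda|<R^{1/q}\}$ and $A^q\in\Sect(q\alpha)$ by Proposition~\ref{PrBBL}; recall also that $f(A)$ is a well-defined closed operator in the extended holomorphic calculus.

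Next I would record the resolvent representation. Because $\pi/q<\theta_1$, the map $\zeta\mapsto\zeta^{1/q}$ sends $\C\setminus(-\infty,0]$ into $\Sigma_{\pi/q}\subset\Sigma_{\theta_1}$, so $\zeta\mapsto f(\zeta^{1/q})$ is holomorphic there and, by Corollary~\ref{NPabsd}, takes values in $\overline{\Sigma}_{\pi\theta_2/(q\theta_1)}$; since $\gamma+\pi\theta_2/(q\theta_1)<\pi$, this sector misses $-z$ for every $z\in\Sigma_\gamma$, so $\zeta\mapsto(z+f(\zeta^{1/q}))^{-1}$ is holomorphic on $\C\setminus(-\infty,0]$, hence on a neighbourhood of $\sigma(A^q)$. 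Applying the Riesz--Dunford calculus of the bounded operator $A^q$ along the contour $\{|\xi|=R\}$ closed around the slit $(-R,0]$ --- equivalently, substituting $\lambda\mapsto A$ in~\eqref{R11} and using the composition rule to identify $(f\circ(\cdot)^{1/q})(A^q)$ with $f(A)$, with Lemma~\ref{resolvent} available to justify the passage --- gives, for $z\in\Sigma_\gamma$,
\begin{equation*}
(z+f(A))^{-1}=\frac{q}{\pi}\int_{0}^{2\|A\|}\frac{\im f(te^{i\pi/q})\,t^{q-1}(t^q+A^q)^{-1}}{(z+f(te^{i\pi/q}))(z+f(te^{-i\pi/q}))}\,dt+\frac{1}{2\pi i}\int_{|\xi|=R}\frac{(\xi-A^q)^{-1}}{z+f(\xi^{1/q})}\,d\xi .
\end{equation*}

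Then I would estimate the two terms, the underlying scalar fact being that $|u+v|\ge\cos(\phi/2)(|u|+|v|)$ whenever $|\arg u-\arg v|\le\phi<\pi$. With $\phi:=\gamma+\pi\theta_2/(q\theta_1)\le\gamma+\pi/q$ and the sector location above this yields $|z+f(\zeta)|\ge\cos((\gamma+\pi/q)/2)(|z|+|f(\zeta)|)$ for $\zeta=\xi^{1/q}$ and for $\zeta=te^{\pm i\pi/q}$. For the boundary term, combining this with $\|(\xi-A^q)^{-1}\|\le 2/R$ on $|\xi|=R$ (Neumann series) gives the bound $2\big(\cos((\gamma+\pi/q)/2)\,|z|\big)^{-1}$, which is the second summand of~\eqref{constant}. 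For the first term one uses $\|(t^q+A^q)^{-1}\|\le M_q(A)t^{-q}$ (Proposition~\ref{PrBBL}), bounds one denominator factor below by $\cos((\gamma+\pi/q)/2)|z|$ and the other by $\cos((\gamma+\pi/q)/2)|f(te^{\pm i\pi/q})|\ge\cos((\gamma+\pi/q)/2)\,C\,f(bt)$ --- the last inequality being~\eqref{NP102} with $\beta=\pi/q$ and $c=b^{\pi/(2\theta_1)}$, the choice that produces exactly $C=b^{\theta_2/\theta_1}[\cos(\pi^2/(2\theta_1 q))]^{2\theta_2/\pi}$ --- and absorbs the remaining factor $t^{-1}\im f(te^{i\pi/q})$ by means of $|\im f(te^{i\pi/q})|\le m(\pi/q)\,t\,f'(bt)$, valid because $f\in\mathcal{D}_{\theta_1}(0,a)$ with $b=b(\pi/q,2\|A\|)$; a careful scalar integration --- for which the requisite cancellation of the singularity at $t=0$ is provided by Lemma~\ref{chi} of Appendix~A --- produces the first summand of $c_{q,\gamma}$, and adding the two bounds gives~\eqref{family}. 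Finally, \eqref{family} forces $\sigma(f(A))\subset\overline{\Sigma}_{\tilde{\alpha}}$ (equivalently, use the spectral mapping theorem together with~\eqref{NP101}), and for $\omega\in(\tilde{\alpha},\pi)$ and $\mu\notin\overline{\Sigma}_\omega$ one has $-\mu\in\Sigma_{\pi-\omega}$ with $\pi-\omega<\pi-\tilde{\alpha}=\sup\{\pi(1-\theta_2/(q\theta_1)):q<\pi/\alpha\}$; choosing $q$ near $\pi/\alpha$ and $\gamma=\pi-\omega$ in~\eqref{family} yields $\|\mu(\mu-f(A))^{-1}\|\le c_{q,\gamma}$, whence $f(A)\in\Sect(\tilde{\alpha})$.

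The main obstacle is the behaviour of the first integrand as $t\to0$. Bounding $(t^q+A^q)^{-1}$ by $M_q(A)t^{-q}$ leaves a non-integrable $t^{-1}$, which must be absorbed by the vanishing of $\im f(te^{i\pi/q})$ --- this is what the defining estimate of $\mathcal{D}_{\theta_1}(0,a)$ is for --- while \emph{simultaneously} only one power of $|z|$ may be spent on the denominator, the other being traded for $|f(te^{\pm i\pi/q})|$ via~\eqref{NP102}. Making both work at once with explicit constants that are, moreover, uniform under the regularisation $A\rightsquigarrow A+\epsilon$ is what dictates the coupled choices $R=(2\|A\|)^q$, $b=b(\pi/q,2\|A\|)$, $c=b^{\pi/(2\theta_1)}$ above, and is the reason the auxiliary class $\mathcal{D}_{\theta_1}(0,a)$ and the lower bound~\eqref{NP102} were prepared in advance.
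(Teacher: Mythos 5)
Your overall architecture is the same as the paper's: transplant the scalar identity \eqref{R11} into an operator formula, bound the circle term by a Neumann series together with Lemma \ref{sum}, and bound the slit term using Proposition \ref{PrBBL}, the lower bound \eqref{NP102} and the $\mathcal{D}_{\theta_1}(0,a)$ estimate. But the way you carry out the slit estimate does not work. You bound the product $(z+f(te^{i\pi/q}))(z+f(te^{-i\pi/q}))$ from below by $\cos^2((\pi/q+\gamma)/2)\,|z|\cdot C f(bt)$, spending one factor on $|z|$ and the other on $Cf(bt)$. After inserting $\|(t^q+A^q)^{-1}\|\le M_q(A)t^{-q}$ and $|\im f(te^{i\pi/q})|\le m(\pi/q)\,t\,f'(bt)$, the integrand is then comparable to $f'(bt)/f(bt)$, whose integral over $(0,R^{1/q})$ is $b^{-1}\log f(bt)$ evaluated at the endpoints; it diverges whenever $f(0+)=0$ (for instance $f(\lambda)=\lambda$, which lies in the class), and even when it converges it yields a logarithmic constant, not \eqref{constant}. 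The paper's point is precisely \emph{not} to split: one keeps the joint lower bound $\cos^2((\pi/q+\gamma)/2)\,(|z|+Cf(bt))^2$ and integrates exactly, $\int_0^{R^{1/q}} f'(bt)\,(|z|+Cf(bt))^{-2}\,dt\le (Cb)^{-1}(|z|+Cf(0+))^{-1}\le (Cb\,|z|)^{-1}$; the explicit antiderivative simultaneously tames the singularity at $t=0$ and releases exactly one power of $|z|$, which is what produces the first summand of \eqref{constant}. Your appeal to Lemma \ref{chi} to ``cancel the singularity at $t=0$'' is misplaced: that lemma concerns the particular functions $\bigl((1-\lambda)/(1+\lambda)\bigr)^n$ and serves only to verify the $\mathcal{D}_{\pi/2}(0,1)$ hypothesis in Lemma \ref{CorD} (hence for Theorem \ref{mainT2}); in Theorem \ref{mainT} the $\mathcal{D}$-condition is an assumption and gives you nothing beyond $|\im f(te^{i\pi/q})|\le m\,t\,f'(bt)$.

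Second, your preliminary reduction is not free. Replacing $A$ by $A+\epsilon$ so that the Riesz--Dunford calculus of the invertible operator $(A+\epsilon)^q$ applies on a keyhole contour is fine in itself, but to transfer \eqref{family} back to $A$ you assert ``strong resolvent convergence $f(A+\epsilon)\to f(A)$'' without proof. For a general $f\in\mathcal{NP}_+(\theta_1,\theta_2)\cap\mathcal{D}_{\theta_1}(0,a)$ and a merely injective $A$, the operator $f(A)$ is defined only through the extended holomorphic calculus via the regularizer $\tau$, and identifying the limit of $(z+f(A+\epsilon))^{-1}$ with $(z+f(A))^{-1}$ requires a genuine argument (uniform sectoriality of the family $A+\epsilon$, convergence of the regularised integrals on a core, identification of the limiting pseudo-resolvent); supplying it would essentially amount to redoing the identification step. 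The paper is structured so as to avoid this: it defines $R_q(z;f,A)$ directly for the given, possibly non-invertible, $A$, proves the bound for it, and then shows $R_q(z;f,A)=(z+f(A))^{-1}$ for $z>0$ by computing $\bigl(\tau\cdot(z+f)^{-1}\bigr)(A)$ through Lemma \ref{In1} and a Fubini argument, followed by the product rule of Proposition \ref{fcrules} and analytic continuation via Lemma \ref{resolvent}. Your concluding derivation of $f(A)\in\Sect(\tilde\alpha)$ from \eqref{family} is fine, but as written both the key integral estimate and the $\epsilon$-reduction are gaps.
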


\begin{proof}
Let  $q\in \left(\pi/\theta_1,\pi/\alpha\right) and $ $\gamma \in
\left(0, \pi\left(1-\frac{\theta_2}{q\theta_1}\right)\right)$  be
fixed, and set $R=2^q\|A^q\|$. Having in mind \eqref{R11}, let us
set formally
\begin{eqnarray}\label{Ra}
R_q(z;f, A)&:=&\frac{q}{\pi}\int_0^{R^{1/q}}
\frac{{\rm Im}\, f(te^{i\pi/q})\,t^{q-1}(A^q+t^q)^{-1}\,dt}
{(z+f(te^{i\pi/q}))(z+f(te^{-i\pi/q}))}\\
&-&\frac{1}{2\pi i}\int_{|\xi|=R}\, \frac{(\xi -A^q)^{-1}\,d\xi}{z+f(\xi^{1/q})},\qquad z\in
\Sigma_\gamma \notag.
\end{eqnarray}
We first prove that $R_q(\cdot;f, A):\Sigma_\gamma
\mapsto  \mathcal{L}(X)$  is a well-defined holomorphic function and
derive a bound for $\|z R_q(z;f, A)\|$ when $z \in \Sigma_\gamma.$

We consider  each of the two terms in \eqref{Ra}
separately. To estimate the first term, note that by
Corollary \ref{NPabsd}
\begin{equation}\label{an1}
f(t e^{i\beta})\in \overline{\Sigma}_{\pi/q},\quad t>0,\quad |\beta|\le \pi/q.
\end{equation}
Then, by Lemma \ref{sum} (from Appendix A) and
Corollary \ref{NPabsd},
for all $c\in (0,1]$ and all $z\in \Sigma_\gamma,$
\begin{align}\label{q30}
|(z+f(t e^{i\pi/q}))(z+f(te^{-i\pi/q}))|
\ge& \cos^2\left((\pi/q+\gamma)/2\right)\, \left(|z|+|f(te^{i\pi/q})|\right)^2 \\
\ge& \cos^2\left((\pi/q+\gamma)/2 \right)\, \left(|z|+C\,f(\delta
t)\right)^2,\notag
\end{align}
where
\[
C=c^{2\theta_2/\pi}[\cos(\pi^2/(2q\theta_1))]^{2\theta_2/\pi} \quad \text{and} \quad
\delta=\delta(c):=c^{2\theta_1/\pi}.
\]
Now,  let $b=b(\pi/q,R^{1/q})$ and $m=m(\pi/q)$ be given for $f$
by the definition of $\mathcal{D}_{\theta_1}(0,a).$ Put
\begin{equation}
c:=b^{\pi/(2\theta_1)}\in (0,1],
\end{equation}
so  that
\[
b=\delta=c^{2\theta_1/\pi} \quad \mbox{and}\quad
C=b^{\theta_2/\theta_1}[\cos(\pi^2/(2q\theta_1))]^{2\theta_2/\pi}.
\]
According to (\ref{AddCon}), we have
\[
|{\rm Im}\,f(t e^{i\pi/q})|\le m(\pi/q) t f'(bt),\quad t\in (0,R^{1/q}).
\]
Furthermore, by Proposition \ref{PrBBL}, $A^q$ is sectorial, and
by \eqref{epapprox},
\[
\|(A^q+t^q)^{-1}\|\le \frac{M_q(A)}{t^q},\qquad t>0.
\]
Taking the above bounds into account, we then proceed as follows:
\begin{eqnarray}\label{BcA}
&&\int_0^{R^{1/q}}\,
\frac{|{\rm Im}\,f(t e^{i\pi/q})|\,\|(A^q+t^q)^{-1}\|t^{q-1}\,dt}
{|(z+f(te^{i\pi/q}))(z+f(te^{-i\pi/q}))|} \\
&\le& M_q\int_0^{R^{1/q}}\,
\frac{|{\rm Im}\,f(t e^{i\pi/q})|\,dt}
{|(z+f(te^{i\pi/q}))(z+f(te^{-i\pi/q}))|t}\notag\\
&\le&
\frac{M_q(A) m(\pi/q)}{\cos^2((\pi/q+\gamma)/2)}
\int_0^{R^{1/q}}\,
\frac{f'(b t)\,dt}
{(|z|+C\,f(b t))^2}\notag \\
&\le&\frac{M_q(A) m(\pi/q)}{C b \cos^2((\pi/q+\gamma)/2)}\cdot
\frac{1}{|z|}.\notag
\end{eqnarray}

Next, we  estimate the second term in (\ref{Ra}). Note that
\[
\|(\xi-A^q)^{-1}\|\le \frac{1}{|\xi|-\|A^q\|}\le \frac{2}{|\xi|},\qquad |\xi|\ge 2\|A^q\|.
\]
Using   once again Lemma \ref{sum} and taking into account (\ref{an1}), we infer that
\begin{align}\label{second}
\frac{1}{2\pi}\int_{|\xi|=R}\,
\frac{\|(\xi-A^q)^{-1}\|\,|d\xi|}{|z+f(\xi^{1/q})|}
\le \frac{1}{\pi R\cos\left((\pi/q+\gamma)/2\right)}\int_{|\xi|=R}\,
\frac{|d\xi|}{|z|+|f(\xi^{1/q})|}\\
\le\frac{2}{\cos\left((\pi/q+\gamma)/2\right)}\cdot \frac{1}{|z|},\qquad z\in \Sigma_\gamma.\notag
\end{align}

Finally, since the integrals in \eqref{Ra} converge absolutely,
the operator-valued function  $R_q(\cdot;f, A):\Sigma_\gamma
\mapsto  \mathcal{L}(X)$ is  holomorphic by a standard application of the Morera
theorem.

Thus, due to (\ref{Ra}), (\ref{BcA}) and (\ref{second}),
$R_q(\cdot;f, A): \Sigma_\gamma\mapsto \mathcal{L}(X)$ is
holomorphic for every $\gamma\in (0,\pi(1-q^{-1})).$ Moreover,
\begin{equation}\label{familyT}
\|R_q(z;f,A)\|\le \frac{c_{q,\gamma}}{|z|},\qquad z\in \Sigma_\gamma,
\end{equation}
where $c_{q,\gamma}$ is defined by (\ref{constant}).
(Note that $c_{q,\gamma}$ depends only on $q, \gamma$, $\omega$, $\|A\|$ and $M_q(A).$)

Next we show that if $z\in \Sigma_\gamma,$ then $R_q(\cdot;f, A)$ coincides with $(z+f(A))^{-1},$
and as a consequence that \eqref{family} holds.
From Lemma \ref{resolvent} it follows that it suffices to prove
(\ref{Ra}) only for $z>0$.

So, let $z >0$ be fixed.
Since $A$ has trivial kernel,
and the function $(z+\cdot)^{-1}$  is bounded on $\C_{+}$ for every $z>0$,
 $(z+f)^{-1}(A)$ is  defined
in the extended holomorphic calculus via (\ref{hfc}) with $n=1.$
Moreover, using Lemma \ref{In1}, for all  ${\tilde\omega}\in (\alpha,\omega)$
and $z >0,$
\begin{align*}
&\left(\frac{\lambda}{(\lambda+1)^2(z+f)}\right)(A)
=\frac{1}{2\pi i}
\int_{\partial \Sigma_{\tilde{\omega}}}
\frac{\lambda}{(\lambda+1)^2}\frac{(\lambda-A)^{-1}}{(z+f(\lambda))}\,d\lambda\\
=&\frac{q}{2\pi^2 i}\int_0^{R^{1/q}}
\frac{{\rm Im}\,f(te^{i\pi/q})\,t^{q-1}}
{(z+f(te^{i\pi/q}))(z+f(te^{-i\pi/q}))}
\int_{\partial \Sigma_{\tilde{\omega}}}\frac{\lambda (\lambda-A)^{-1}}
{(\lambda+1)^2(\lambda^q+t^q)}\,d\lambda dt\\
+&\frac{1}{(2\pi i)^2}\int_{|\xi|=R}\,\frac{1}{(z+f(\xi^{1/q}))}
\int_{\partial \Sigma_{\tilde\omega}}
\frac{\lambda}{(\lambda+1)^2}\frac{(\lambda-A)^{-1}}{(\xi-\lambda^q)}\,d\lambda\, d\xi\\
=&\frac{q}{\pi}\int_0^{R^{1/q}}
\frac{{\rm Im}\,f(te^{i\pi/q})\,t^{q-1}}
{(z+f(te^{i\pi/q}))(z+f(te^{-i\pi/q}))}
A(A+1)^{-2}(A^q+t^q)^{-1}\, dt\\
-&\frac{1}{2\pi i}\int_{|\xi|=R}\,\frac{1}{(z+f(\xi^{1/q}))}
A(A+1)^{-2}(\xi-A^q)^{-1}\, d\xi\\
=&A(A+1)^{-2}R_q(z;f,A).
\end{align*}
Hence, by (\ref{hfc}),
\[
(z+f)^{-1}(A)=R_q(z;f,A)
\]
for all $z >0.$ Now the product rule for the extended holomorphic functional
calculus (Proposition \ref{fcrules}, (i)) yields
\begin{eqnarray*}
R_q(z;f,A)(z+f(A))&\subset& (z+f(A))R_q(z;f,A)\\
&=&(z+f)(A)(z+f)^{-1}(A)=1.
\end{eqnarray*}
In other words, we have  $R_q(z;f,A)=(z+f(A))^{-1}$ for each $z>0,$
and then for each
$z\in \Sigma_\gamma.$
Hence, in particular, $(z+f(A))^{-1}$ satisfies \eqref{familyT}.

Thus, from (\ref{Ra}) and  (\ref{familyT}) it follows that
$f(A)\in \Sect(\pi-\gamma)$, where $\pi-\gamma \in
(\pi/q,\pi).$ Since $q$ can be made arbitrarily close to
$\pi/\alpha,$ so that $\gamma$ is arbitrarily close to
$\pi-\alpha,$  we conclude that $f(A)\in
\Sect(\alpha)$.
\end{proof}

Next we obtain a corollary of Theorem \ref{mainT} for certain functions from $\mathcal{NP}_+$
arising in the study of convex power series of Ritt operators.
Let
\begin{equation}\label{hc}
h(\lambda):=\sum_{n=0}^\infty c_n \lambda^n,\qquad \lambda\in \D,
\quad c_n\ge 0, \quad \sum_{n=0}^\infty c_n=1,
\end{equation}
and
\begin{equation}\label{hc1}
\mathbf{h}(\lambda):=1-h\left(\frac{1-\lambda}{1+\lambda}\right)
=1-\sum_{n=0}^\infty
c_n\left(\frac{1-\lambda}{1+\lambda}\right)^n,\qquad \lambda\in
\overline{\C}_{+}.
\end{equation}

The next result is a direct corollary of Theorem \ref{mainT}.
\begin{thm}\label{mainT2}
Let $\mathbf{h}$ be given by  (\ref{hc1}), and let $A \in
{\mathcal L}(X)$ be such that $A\in \Sect(\alpha)$, $\alpha\in
[0,\pi/2),$ and $\ker{A}=\{0\}$. Then $\mathbf{h}(A)$ is defined
in the extended holomorphic functional calculus,  and  $\mathbf{h}(A)\in \Sect(\alpha).$
Moreover,  for all
$q\in \left(2,\pi/\alpha\right)$ and $\gamma \in \left(0, \pi\left(1-\frac{1}{q}\right)\right)$,
one has
\begin{equation}\label{family2}
\|(z + \mathbf{h}(A))^{-1}\|\le \frac{c_{q,\gamma}}{|z|},\qquad z\in \Sigma_\gamma,
\end{equation}
where
\begin{equation}\label{constant2}
c_{q,\gamma}=
\frac{q M_q(A)}{2b^2\cos(\pi/q)\cos^2(\pi/q+\gamma)/2}
+\frac{2}{\cos(\pi/q+\gamma)/2},
\end{equation}
with $b=b_{q,\|A\|}=\frac{\cos(\pi/q)}{1+4\|A\|^2},$ and $M_q(A)$ given by  \eqref{epapprox}.
\end{thm}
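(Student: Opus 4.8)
The statement is designed to be read off from Theorem~\ref{mainT} upon specializing to $\theta_1=\theta_2=\pi/2$ and $a=1$, so the plan is essentially bookkeeping once the right auxiliary input is in place. First I would identify $\mathbf{h}$ from \eqref{hc1} with the function treated in Lemma~\ref{CorD}: since $(1-\lambda)/(1+\lambda)=\mathcal{C}(\lambda)$ and the coefficients $(c_n)$ satisfy $c_n\ge 0$, $\sum_n c_n=1$, Lemma~\ref{CorD} gives $\mathbf{h}\in\mathcal{NP_+}\cap\mathcal{D}_{\pi/2}(0,1)$ and, crucially, tells us that the constants from Definition~\ref{classd} can be taken as $b(\beta,R)=\cos\beta/(1+R^2)$ and $m(\beta)=\pi/2$. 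Because $\mathcal{NP_+}=\mathcal{NP_+}(\pi/2,\pi/2)$, Theorem~\ref{NP}(ii) yields $|\mathbf{h}(\lambda)|\le c_\omega(|\lambda|+|\lambda|^{-1})$ on each subsector, hence $\tau^2\mathbf{h}\in H_0^\infty$ and $\mathbf{h}(A)$ is well defined in the extended holomorphic calculus for the injective bounded sectorial operator $A$ (as noted just after the definition of $\Phi_e$).

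Next I would invoke Theorem~\ref{mainT} with $f=\mathbf{h}$, $\theta_1=\theta_2=\pi/2$, $a=1$. Its hypotheses hold: $A\in\Sect(\alpha)$ with $\alpha\in[0,\pi/2)=[0,\theta_1)$, $A$ is bounded, and $\ker A=\{0\}$. The conclusion $\mathbf{h}(A)\in\Sect(\tilde\alpha)$ with $\tilde\alpha=(\theta_2/\theta_1)\alpha=\alpha$, together with the admissible ranges $q\in(\pi/\theta_1,\pi/\alpha)=(2,\pi/\alpha)$ and $\gamma\in(0,\pi(1-\theta_2/(q\theta_1)))=(0,\pi(1-1/q))$ and the resolvent estimate \eqref{family}, are precisely the assertions of Theorem~\ref{mainT2} and \eqref{family2}.

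It remains to transcribe the constant \eqref{constant} into \eqref{constant2}. Setting $\theta_1=\theta_2=\pi/2$ gives $m(\pi/q)=\pi/2$, $b=b(\pi/q,2\|A\|)=\cos(\pi/q)/(1+4\|A\|^2)=b_{q,\|A\|}$, and
\[
C=b^{\theta_2/\theta_1}\bigl[\cos\bigl(\pi^2/(2\theta_1 q)\bigr)\bigr]^{2\theta_2/\pi}=b\cos(\pi/q),
\]
so $Cb\pi=\pi b^2\cos(\pi/q)$; the first summand of \eqref{constant} collapses to $qM_q(A)/\bigl(2b^2\cos(\pi/q)\cos^2((\pi/q+\gamma)/2)\bigr)$ and the second is unchanged, which is \eqref{constant2}. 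The upgrade from $\mathbf{h}(A)\in\Sect(\pi-\gamma)$ to $\mathbf{h}(A)\in\Sect(\alpha)$ comes, as in the closing lines of the proof of Theorem~\ref{mainT}, from letting $q\uparrow\pi/\alpha$ and $\gamma\uparrow\pi-\alpha$. I do not expect any real obstacle: all the analysis already lives in Theorem~\ref{mainT} and Lemma~\ref{CorD}, and the only points requiring attention are that the parameter intervals match and that replacing the proof's choice $b=b(\pi/q,R^{1/q})$, $R=2^q\|A^q\|$ (so $R^{1/q}\le 2\|A\|$) by the slightly smaller $b=b(\pi/q,2\|A\|)$ only enlarges the constant, since $b(\beta,\cdot)$ is decreasing and $b\mapsto Cb$ increasing, so \eqref{family2} stays valid.
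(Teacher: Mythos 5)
Your proposal is correct and follows exactly the paper's route: the paper's proof consists of citing Lemma \ref{CorD} (with $b(\beta,R)=\cos\beta/(1+R^2)$, $m(\beta)=\pi/2$) and then applying Theorem \ref{mainT} with $\theta_1=\theta_2=\pi/2$, which is precisely your argument. Your extra checks (well-definedness via regularization and the transcription of \eqref{constant} into \eqref{constant2}) are accurate bookkeeping that the paper leaves implicit.
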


\begin{proof}
By Lemma \ref{CorD}, $\mathbf{h}$ belongs to
$\mathcal{D}_{\pi/2}(0,1)\cap \mathcal{NP_+}.$
Moreover  $\mathbf{h}$ satisfies Definition \ref{classd} of $\mathcal{D}_{\pi/2}(0,1)$ with
\[
b=b(\beta,R)=\frac{\cos\beta}{1+R^2}\in \left(0,\min\{1,1/(2R)\}\right)
\]
and $m(\beta)=\pi/2.$ Therefore, by Theorem
\ref{mainT} (with $\theta_1=\theta_2=\pi/2$), we get
\eqref{family2} and \eqref{constant2}.
\end{proof}

We proceed with obtaining a counterpart of the preceding result
for complete Bernstein functions of sectorial operators.
It will be needed in the next section in the study of improving properties of Hausdorff functions.
As above, the explicit constants will be given since they will be
crucial for the sequel.

\begin{thm}\label{MT10}
Suppose  $\psi\in \mathcal{CBF}$ and
\eqref{ConM0101} holds for some $\omega\in (0,\pi/2)$.
Let $A\in \mathcal{L}(X)$ be such that
$A\in \Sect(\pi/2)$ and
$\ker{A}=\{0\}$.
Then $\psi(A)\in\Sect(\omega)$.
If  the numbers $\omega_0, \theta$ and $\theta_0$ are as in Proposition \ref{pr7.6},
 then for all
$q\in \left(\pi/\theta,2\right)$ and
$\gamma \in \left(0, \pi\left(1-\frac{\theta_0}{q\theta}\right)\right)$,
one has
\begin{equation}\label{family1}
\|(z + \psi(A))^{-1}\|\le \frac{c_{q,\gamma}}{|z|},\qquad z\in \Sigma_\gamma,
\end{equation}
where
\begin{equation}\label{constant3}
c_{q,\gamma}:=
\frac{2 q M_q(A)\tan(\pi/(2q))}{C\pi \cos^2(\pi/q+\gamma)/2}
+\frac{2}{\cos(\pi/q+\gamma)/2}.
\end{equation}
 with $C:=[\cos (\pi^2/(2\theta q))]^{2\theta_0/\pi}$ and $M_q(A)$ given by
  \eqref{epapprox}.
\end{thm}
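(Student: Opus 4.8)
The plan is to deduce this from Theorem~\ref{mainT}. First I would fix $\omega_0\in(\pi/2,\pi)$ as in Proposition~\ref{pr7.6}, and for $\theta\in(\pi/2,\omega_0)$ the associated $\theta_0\in(0,\pi/2)$. Since $\psi\in\mathcal{CBF}$, it extends holomorphically to $\C\setminus(-\infty,0]\supset\Sigma_\theta$, sends $(0,\infty)$ into $[0,\infty)$, and by \eqref{7.15} (available thanks to the standing hypothesis \eqref{ConM0101}) satisfies $\psi(\overline{\Sigma_\theta})\subset\overline{\Sigma}_{\theta_0}$; hence $\psi\in\mathcal{NP_+}(\theta,\theta_0)$. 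Moreover $\psi\in\mathcal{D}_\theta(0,a)$ for every $a>0$ by the remark after Lemma~\ref{phi}; the point I would record explicitly is that, by \eqref{Cbf}, $\psi'(t)=b+\int_{(0,\infty)}s(t+s)^{-2}\,\mu(ds)$ is non-increasing on $(0,\infty)$, so \eqref{R101x} gives $|{\rm Im}\,\psi(te^{i\beta})|\le 2t\tan(\beta/2)\,\psi'(t)\le 2t\tan(\beta/2)\,\psi'(bt)$ for every $b\in(0,1]$; thus in Definition~\ref{classd} one may take $m(\beta)=2\tan(\beta/2)$ and, $a$ being arbitrary, let $b\uparrow1$. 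Applying Theorem~\ref{mainT} with $\theta_1=\theta$, $\theta_2=\theta_0$, $\alpha=\pi/2$ — legitimate since $\pi/2\in[0,\theta)$, $A\in\Sect(\pi/2)$ and $\ker A=\{0\}$ — gives $\psi(A)\in\Sect\!\big(\tfrac{\pi}{2}\cdot\tfrac{\theta_0}{\theta}\big)$ and the resolvent bound \eqref{family}--\eqref{constant} for all $q\in(\pi/\theta,2)$ and $\gamma\in\big(0,\pi(1-\theta_0/(q\theta))\big)$.

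To bring the constant to the sharper form \eqref{constant3}, I would re-run the two estimates \eqref{BcA} and \eqref{second} of the proof of Theorem~\ref{mainT} with $c=1$ in Corollary~\ref{NPabsd} (hence $\delta=1$) and $b=1$ in the $\mathcal{D}_\theta(0,a)$-bound, both legitimate for $\psi$ by the monotonicity of $\psi'$ just noted. With $c=1$, estimate \eqref{q30} becomes
\[
|(z+\psi(te^{i\pi/q}))(z+\psi(te^{-i\pi/q}))|\ \ge\ \cos^2\!\big(\tfrac{\pi/q+\gamma}{2}\big)\big(|z|+C\psi(t)\big)^2,\qquad C=\big[\cos(\pi^2/(2\theta q))\big]^{2\theta_0/\pi};
\]
combining this with $|{\rm Im}\,\psi(te^{i\pi/q})|\le 2t\tan(\pi/(2q))\psi'(t)$, with $\|(A^q+t^q)^{-1}\|\le M_q(A)t^{-q}$ from \eqref{epapprox}, and with the elementary inequality $\int_0^\infty\psi'(t)(|z|+C\psi(t))^{-2}\,dt\le1/(C|z|)$ (substitute $u=\psi(t)$ and use $\psi(0+)\ge0$) bounds the first term of \eqref{Ra} by $\tfrac{2qM_q(A)\tan(\pi/(2q))}{C\pi\cos^2((\pi/q+\gamma)/2)}\cdot\tfrac1{|z|}$, while the second term is bounded as in \eqref{second} by $\tfrac{2}{\cos((\pi/q+\gamma)/2)}\cdot\tfrac1{|z|}$. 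Adding these gives \eqref{family1} with $c_{q,\gamma}$ as in \eqref{constant3}. The identification $R_q(z;\psi,A)=(z+\psi(A))^{-1}$ on $\Sigma_\gamma$ then follows verbatim as in the proof of Theorem~\ref{mainT}: $\psi$ lies in the extended holomorphic calculus of $A$, one evaluates the regularized resolvent $(z+\psi)^{-1}(A)$ using Lemma~\ref{In1} and the calculus rules of Proposition~\ref{fcrules}, and extends the resulting identity from $z>0$ to all of $\Sigma_\gamma$ by Lemma~\ref{resolvent}.

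For the sectoriality statement, the above shows $\psi(A)\in\Sect\!\big(\tfrac{\pi}{2}\cdot\tfrac{\theta_0}{\theta}\big)$ for every $\theta\in(\pi/2,\omega_0)$. As $\theta\downarrow\pi/2$, \eqref{7.14} gives $\cos\theta\to0$ and $\sin\theta\to1$, hence $\cot\theta_0\to\cot\omega$, so $\theta_0\to\omega$ and $\tfrac{\pi}{2}\cdot\tfrac{\theta_0}{\theta}\to\omega$. Thus $\psi(A)\in\Sect(\nu)$ for every $\nu>\omega$, and since the resolvent bounds likewise force $\sigma(\psi(A))\subset\overline{\Sigma}_\omega$, we conclude $\psi(A)\in\Sect(\omega)$.

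The one genuinely fiddly step will be the constant bookkeeping: verifying that the generic constant \eqref{constant} of Theorem~\ref{mainT} collapses exactly to \eqref{constant3} under $c=1$ and $b\to1$ — in particular that the factor $Cb$ in \eqref{constant} tends to $[\cos(\pi^2/(2\theta q))]^{2\theta_0/\pi}$ and that $m(\pi/q)=2\tan(\pi/(2q))$ — while keeping the angular constraints mutually compatible ($q>\pi/\theta$ so that $\pi^2/(2\theta q)<\pi/2$; $q<2$ so that Proposition~\ref{PrBBL} applies to $A^q$; $\gamma<\pi(1-\theta_0/(q\theta))$). Everything else is a direct transcription of the proof of Theorem~\ref{mainT}.
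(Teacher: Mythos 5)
Your proposal is correct and follows essentially the same route as the paper: verify $\psi\in\mathcal{NP_+}(\theta,\theta_0)\cap\mathcal{D}_\theta(0,a)$ via Proposition \ref{pr7.6} and Lemma \ref{phi} with $m(\beta)=2\tan(\beta/2)$, apply Theorem \ref{mainT} with $\theta_1=\theta$, $\theta_2=\theta_0$, $\alpha=\pi/2$, and let $\theta\to\pi/2$ using \eqref{7.14} to conclude $\psi(A)\in\Sect(\omega)$. Your extra care in justifying $b=1$ (monotonicity of $\psi'$, letting $b\uparrow1$) is a harmless refinement of the paper's direct choice $b(\beta,R)=1$, and the constant bookkeeping collapses to \eqref{constant3} exactly as you indicate.
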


\begin{proof}
Fix $\theta\in (\pi/2,\omega_0).$ Let $\omega_0$ and $\theta_0$ be defined as
in Proposition \ref{pr7.6}.
Then
\[
\psi\in \mathcal{NP_+}(\theta,\theta_0).
\]
Moreover,  from Lemma \ref{phi} it follows that $\psi \in {\mathcal D}_{\theta}(0,a)$ for each $a >0$
with
\[
m(\beta)=2\tan(\beta/2) \quad \text{and} \quad b=b(\beta,R)=1, \quad \beta \in (0,\pi/2), \quad R>0.
\]
Then, by Theorem \ref{mainT},
for $\theta_1=\theta$, $\theta_2=\theta_0$ and $\alpha=\pi/2$,
$\psi(A)$ is defined
in the extended holomorphic functional calculus,  and
\begin{equation}\label{theta0}
\psi(A)\in \Sect(\omega(\theta)),\qquad \omega(\theta)=\frac{\theta_0}{\theta}\cdot\frac{\pi}{2}.
\end{equation}
Moreover, \eqref{family1} holds with the constant given by \eqref{constant3}.

Finally, from
 (\ref{7.14}) it follows that
\[
\lim_{\theta\to \pi/2}\,\theta_0(\theta)=\omega,
\]
hence, according to \eqref{theta0},
$\lim_{\theta\to \pi/2}\,\omega(\theta)=\omega$.
So, by the definition of a sectorial operator, $\psi(A)\in \Sect(\omega)$.
\end{proof}

We turn to the proof of the main
result of this paper on convex power series of Ritt operators.
To this aim, it
will  be convenient to separate the next simple technical
statement  as a lemma.

\begin{lemma}\label{norma1}
Let $g_\epsilon,\, \epsilon\ge 0,$  be given by
$$
g_\epsilon(\lambda)=\frac{(2-\epsilon)\lambda-\epsilon}{2+\epsilon+\epsilon
\lambda}, \qquad |\lambda| \le 1.
$$
Then for every $\epsilon \ge 0$ one has $g_\epsilon \in \Wip (\D)$
and $\|g_\epsilon\|_{\Wip (\D)}=1.$
\end{lemma}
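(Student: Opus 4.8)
The plan is to compute the Taylor expansion of $g_\epsilon$ explicitly and sum the moduli of its coefficients; an elementary algebraic cancellation will collapse this sum to a geometric series. First I would dispose of the case $\epsilon=0$, where $g_0(\lambda)=\lambda$, so that $g_0\in\Wip(\D)$ and $\|g_0\|_{\Wip(\D)}=1$ trivially. For $\epsilon>0$, the unique pole of the Möbius function $g_\epsilon$ lies at $\lambda=-(2+\epsilon)/\epsilon$, of modulus $1+2/\epsilon>1$; hence $g_\epsilon$ is holomorphic on a disc strictly larger than $\overline{\D}$, its Taylor coefficients decay geometrically, and $g_\epsilon\in\Wip(\D)$.

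Next I would factor
\[
g_\epsilon(\lambda)=\Big(\tfrac{2-\epsilon}{2+\epsilon}\lambda-\tfrac{\epsilon}{2+\epsilon}\Big)\cdot\frac{1}{1+a\lambda},\qquad a:=\frac{\epsilon}{2+\epsilon}\in(0,1),
\]
expand $1/(1+a\lambda)=\sum_{k\ge0}(-a)^k\lambda^k$, and collect powers of $\lambda$. Using the identity $(2-\epsilon)(2+\epsilon)+\epsilon^2=4$, I expect to get $g_\epsilon(\lambda)=\sum_{n\ge0}c_n\lambda^n$ with $c_0=-a$ and $c_n=\tfrac{4}{(2+\epsilon)^2}(-a)^{n-1}$ for $n\ge1$, so that $|c_0|=a$ and $|c_n|=\tfrac{4}{(2+\epsilon)^2}a^{n-1}$. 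Since $1-a=2/(2+\epsilon)$, this yields
\[
\|g_\epsilon\|_{\Wip(\D)}=\sum_{n\ge0}|c_n|=a+\frac{4}{(2+\epsilon)^2}\cdot\frac{1}{1-a}=\frac{\epsilon}{2+\epsilon}+\frac{2}{2+\epsilon}=1.
\]

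There is no genuine obstacle here; the only point to get right is the cancellation $(2-\epsilon)(2+\epsilon)+\epsilon^2=4$, which is precisely what turns $(|c_n|)_{n\ge1}$ into a pure geometric progression. As a cross-check, and an alternative route to the lower bound $\|g_\epsilon\|_{\Wip(\D)}\ge1$, I would note that $g_\epsilon(-1)=-1$, whence $\|g_\epsilon\|_{\Wip(\D)}\ge|g_\epsilon(-1)|=1$; for the matching upper bound one may instead observe that $\widetilde g_\epsilon(\lambda):=-g_\epsilon(-\lambda)=\frac{(2-\epsilon)\lambda+\epsilon}{2+\epsilon-\epsilon\lambda}$ has, by the same expansion, nonnegative Taylor coefficients, so that $\sum_n|c_n|=\widetilde g_\epsilon(1)=1$.
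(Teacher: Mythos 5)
Your proof is correct and follows essentially the same route as the paper: both compute the Taylor coefficients of $g_\epsilon$ explicitly (constant term $-\epsilon/(2+\epsilon)$, then an alternating geometric sequence with ratio $\epsilon/(2+\epsilon)$ thanks to the cancellation $(2-\epsilon)(2+\epsilon)+\epsilon^2=4$) and sum their moduli to get $1$. The paper merely organizes the expansion differently, writing $g_\epsilon(\lambda)=-1+\frac{2(1+\lambda)}{2+\epsilon+\epsilon\lambda}$ before expanding, and your extra cross-checks via $g_\epsilon(-1)=-1$ and the reflected function $-g_\epsilon(-\lambda)$ are sound but not needed.
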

\begin{proof}
Note that
\begin{align*}
g_\epsilon(\lambda)=&-1+\frac{2(1+\lambda)}{2+\epsilon+\epsilon \lambda}\\
=&-1+\frac{2}{2+\epsilon}+\frac{4\lambda}{(2+\epsilon)(2+\epsilon+\epsilon
\lambda)}\\
=&-\frac{\epsilon}{2+\epsilon}
+\frac{4\lambda}{(2+\epsilon)^2}\sum_{n=0}^{\infty}\frac{(-1)^n
\epsilon^n \lambda^n}{(2+\epsilon)^n},\qquad |\lambda|\le 1.
\end{align*}
Hence
\begin{align*}
\|g_\epsilon\|_{\Wip (\D)}=
&\frac{\epsilon}{2+\epsilon}+\frac{4}{(2+\epsilon)^2}\sum_{n=0}^{\infty}\frac{\epsilon^n}{(2+\epsilon)^n}\\
=&\frac{\epsilon}{2+\epsilon}+\frac{2}{2+\epsilon}=1.
\end{align*}
\end{proof}

Now we are ready to prove our main result. Besides saying that a
convex power series of a Ritt operator is Ritt, it shows that a
convex power series preserves Stolz type. Moreover we have a
control over the angle of the Ritt operator given by the series.

In the course of our proof we first show that $1-
h(T)=\mathbf{h}(\mathcal C(T))$ in a ``simple'' case when $T$ is
Ritt and $1-T$ is invertible. Then the sectoriality estimate for
$\mathbf{h}(\mathcal C(T))$ given by Theorem \ref{mainT1}
transfers to that for $1-h(T).$ The general case then follows by
approximation, and the uniformity of the constant in
\eqref{family} with respect to a family approximating $T$ appears
to be indispensable.
\begin{thm}\label{mainT1}
Let  $h$ be defined  by (\ref{hc}), and let $T$ be a Ritt operator
on $X$ . Then there exists $\omega\in [0,\pi/2)$ such that
${\mathcal C}(T) \in \Sect(\omega),$ and $h(T)$ is a  Ritt
operator on $X$ with the same angle $\omega.$ Moreover, if $T$ is
of Stolz type $\sigma,$ then $h(T)$ has Stolz type $\sigma$ as
well.
\end{thm}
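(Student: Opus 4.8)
The plan is to transport the whole statement to the half‑plane via the Cayley transform and to read it off from the sectoriality result of Theorem~\ref{mainT2}, after replacing $T$ by an invertible perturbation. Recall that $h\in\Wip(\D)$ with $\|h\|_{\Wip(\D)}=1$, that a Ritt operator is power bounded, and that $\mathbf h\in\mathcal D_{\pi/2}(0,1)\cap\mathcal{NP}_+$ by Lemma~\ref{CorD}. Since $T$ is Ritt we have $\sigma(T)\subset\D\cup\{1\}$, so $-1\notin\sigma(T)$ and $\mathcal C(T)$ is bounded; fixing a Stolz type $\sigma$ of $T$ (which exists by Proposition~\ref{rittchar}), Proposition~\ref{Stolz} provides a sectoriality angle $\omega\le\arccos(1/\sigma)<\pi/2$ of $\mathcal C(T)$, which we take as the $\omega$ of the statement. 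The spectral half is immediate: by Proposition~\ref{connect} $\sigma(h(T))=h(\sigma(T))$, and $\sigma(T)=\mathcal C(\sigma(\mathcal C(T)))\subset\mathcal C(\overline\Sigma_\omega)\subset\D\cup\{1\}$; since (for non‑constant $h$; the case $h\equiv1$ is trivial) $h$ maps $\D\cup\{1\}$ into itself and $1-h(\mathcal C(\overline\Sigma_\omega))=\mathbf h(\overline\Sigma_\omega)\subset\overline\Sigma_\omega$ by Theorem~\ref{NP}\,(iii), we get $\sigma(h(T))\subset(\D\cup\{1\})\cap\{z:1-z\in\overline\Sigma_\omega\}$. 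By Theorem~\ref{sectorritt} it then remains to prove $1-h(T)\in\Sect(\omega)$, with constants under control.

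For $\epsilon\in(0,1]$ take $g_\epsilon$ from Lemma~\ref{norma1}; then $g_\epsilon(T)$ is defined in the $\Wip(\D)$‑calculus and is power bounded with the same bound $M:=\sup_k\|T^k\|$. A routine rational‑function identity (valid at the operator level because $-1\notin\sigma(T)$) gives $\mathcal C(g_\epsilon(T))=(1+\tfrac\epsilon2)\,\mathcal C(T)+\tfrac\epsilon2=:A_\epsilon$, so $A_\epsilon$ is bounded, invertible (hence $\ker A_\epsilon=\{0\}$) and lies in $\Sect(\omega)$ with sectoriality constants, and therefore also $\|A_\epsilon\|$ and the quantities $M_q(A_\epsilon)$ of Proposition~\ref{PrBBL}, bounded uniformly in $\epsilon$. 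Moreover $\sigma(g_\epsilon(T))=g_\epsilon(\sigma(T))$ is a compact subset of $\D$ (avoiding $1$ because $0\notin\sigma(A_\epsilon)$ and $-1$ because $-1\notin\sigma(T)$), so all the relevant functions are holomorphic on neighbourhoods of the relevant spectra; using $\mathcal C\circ\mathcal C=\mathrm{id}$, the composition rule and the consistency of the $\Wip(\D)$‑, extended holomorphic and Riesz--Dunford calculi on such functions, one obtains $1-h(g_\epsilon(T))=\mathbf h(\mathcal C(g_\epsilon(T)))=\mathbf h(A_\epsilon)$. Now Theorem~\ref{mainT2} applied to $A_\epsilon$ yields $\mathbf h(A_\epsilon)\in\Sect(\omega)$ together with $\|(z+\mathbf h(A_\epsilon))^{-1}\|\le c_{q,\gamma}(\epsilon)/|z|$ for $z\in\Sigma_\gamma$, all $q\in(2,\pi/\omega)$ and $\gamma\in(0,\pi(1-1/q))$, and by \eqref{constant2} $c_{q,\gamma}(\epsilon)$ depends only on $q,\gamma,\|A_\epsilon\|,M_q(A_\epsilon)$; hence $\sup_{\epsilon}c_{q,\gamma}(\epsilon)=:C_{q,\gamma}<\infty$.

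Next I pass to the limit $\epsilon\to0^+$. From the expansion in Lemma~\ref{norma1}, $g_\epsilon\to\lambda$ in $\Wip(\D)$, so $g_\epsilon(T)\to T$ in norm; then $h(g_\epsilon(T))\to h(T)$ in norm, because $h(g_\epsilon(T))-h(T)=\sum_nc_n\bigl(g_\epsilon(T)^n-T^n\bigr)$ with $\|g_\epsilon(T)^n-T^n\|\le\min\bigl(2M,\,nM^2\|g_\epsilon(T)-T\|\bigr)$, and splitting the series at a large index gives convergence. A standard perturbation argument, using the $\epsilon$‑uniform bound $\|(z+1-h(g_\epsilon(T)))^{-1}\|\le C_{q,\gamma}/|z|$ on $\Sigma_\gamma$, then shows that $z+1-h(T)$ is invertible on $\Sigma_\gamma$ with $\|(z+1-h(T))^{-1}\|\le C_{q,\gamma}/|z|$; letting $q\uparrow\pi/\omega$ we obtain this on $\Sigma_\gamma$ for every $\gamma<\pi-\omega$, i.e.\ $1-h(T)\in\Sect(\omega)$. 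Together with the spectral inclusion from the first paragraph this gives, via Theorem~\ref{sectorritt}, that $h(T)$ is Ritt of angle $\omega$.

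Finally, for the Stolz type: if $\sigma(T)\subset\overline S_\sigma$ then $\sigma(h(T))=h(\sigma(T))\subset h(\overline S_\sigma)\subset\overline S_\sigma$ by Proposition~\ref{Stangle1}. It remains to verify \eqref{dopE} for $h(T)$ and each $\delta>\sigma$. Put $B:=1-h(T)\in\Sect(\arccos(1/\sigma))$ (enlarging $\omega$ to $\arccos(1/\sigma)$ if needed) and $w=1-z$, so $(1-z)(z-h(T))^{-1}=-w(w-B)^{-1}$; fix $\omega'\in(\arccos(1/\sigma),\arccos(1/\delta))$. On the part of $\{z\notin S_\delta\}$ with $w\notin\overline\Sigma_{\omega'}$ the desired bound is exactly the sectoriality estimate for $B$; the complementary part lies in $\{z\notin S_\delta,\ 1-z\in\overline\Sigma_{\omega'}\}$, which — using $1-S_\delta\subset\overline\Sigma_{\arccos(1/\delta)}$ from \eqref{stolsec} and $\overline S_\sigma\subset S_\delta$ (as $\sigma<\delta$) — is, outside a neighbourhood of $\infty$, a set bounded away from $\sigma(h(T))\subset\overline S_\sigma$, so the resolvent is bounded there by continuity, while near $\infty$ it decays. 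Hence $h(T)$ has Stolz type $\sigma$. The two places where real work is needed are: the genuine $\epsilon$‑uniformity of the resolvent bound along the family $A_\epsilon$ (encoded in Theorem~\ref{mainT2} but to be checked for this particular perturbation), which is what makes the limit legitimate; and the geometric bookkeeping in this last step, where the complement of the Stolz domain must be split so that one piece is handled by sectoriality near the corner $z=1$ and the other by a compactness argument away from it.
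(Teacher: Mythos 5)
Your proposal follows essentially the same route as the paper: Cayley transform, the sectoriality result of Theorem~\ref{mainT2} (via Lemma~\ref{CorD}), approximation by $g_\epsilon(T)$ with $\epsilon$-uniform constants, a limit argument, and then the spectral inclusion plus the geometric splitting of $\C\setminus S_\delta$ for the Stolz-type claim. The genuine difference is how the key identity $1-h(g_\epsilon(T))=\mathbf h(\mathcal C(g_\epsilon(T)))$ is obtained: the paper proves it by hand, in the case $\sigma(\cdot)\subset\D$, through an explicit $\tau$-regularized contour computation in the extended holomorphic calculus, precisely so as to avoid invoking any composition rule between calculi; you instead note that after the $g_\epsilon$-perturbation all spectra sit compactly inside $\D$, resp.\ $\C_+$, and appeal to the classical Riesz--Dunford composition rule together with consistency of the $\Wip(\D)$-, Dunford and extended holomorphic calculi. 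This is a legitimate and arguably slicker shortcut, but note that the consistency of the sectorial extended calculus with the Dunford calculus is used without proof (it is standard, but it is exactly the content the paper replaces by its explicit computation), and Proposition~\ref{fcrules}\,(ii) as stated in the paper does not literally cover your situation.

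Two concrete corrections. First, the rational identity is wrong: a direct computation (cf.\ \eqref{CT}) gives $\mathcal C(g_\epsilon(T))=\mathcal C(T)+\epsilon$, not $(1+\tfrac\epsilon2)\mathcal C(T)+\tfrac\epsilon2$; this is harmless for your argument (the correct formula makes the $\epsilon$-uniformity of $\|A_\epsilon\|$ and of $M_q(A_\epsilon)$ immediate from \eqref{epapprox}, which is exactly how the paper gets the uniform constant in \eqref{constant2}), but as written your $A_\epsilon$ is not $\mathcal C(g_\epsilon(T))$ and the displayed identity would be false. Second, in the Stolz-type step the assertion that $\{z\notin S_\delta,\ 1-z\in\overline\Sigma_{\omega'}\}$ is bounded away from $\overline S_\sigma$ is true but needs the quantitative point you only gesture at: since $1\in\overline S_\sigma$, one must show this set avoids a neighbourhood of $z=1$, which is the paper's computation that ${\rm Re}\,(1-z)$ is bounded below by a positive constant there (using $\omega'<\arccos(1/\delta)$); with that inserted, and the decay of the resolvent at infinity, your compactness argument closes as intended.
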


\begin{proof}
Since by assumption $T$ is Ritt, $T$ is power bounded and $\sigma(T)\subset \D \cup\{1\}.$

Assume first that $1 \not \in \sigma(T),$ so that $\sigma(T)\subset \D.$
Let $\mathcal{C}$ be the Cayley transform defined by \eqref{funU},
and set for shorthand $A:=\mathcal{C}(T).$ Clearly $A \in {\mathcal L}(X),$ and
Proposition \ref{Stolz} implies that $A \in \Sect(\omega)$ for
some $\omega\in [0,\pi/2).$ Moreover, $\ker (A)=\{0\}.$ Let us
first prove that
\begin{equation}\label{SA}
1- h(T)=\mathbf{h}(A),
\end{equation}
where $\mathbf{h}(A)$ is defined by the extended holomorphic
calculus and $h(T)$ is given by the $\Wip(\D)$-calculus.

By the definition of the extended holomorphic calculus,
\begin{equation}\label{cchh}
\mathbf{h} (A)=A^{-1}(1+A)^2(\mathbf{h} \cdot \tau)(A).
\end{equation}
If $\omega' \in (\omega,\pi/2)$, then
using (\ref{ATeq}) and Cauchy's theorem, we obtain
\begin{eqnarray*}
&&(\mathbf{h} \cdot \tau)(A)=\frac{1}{2\pi i}
\int_{\partial \Sigma_{\omega'}}
\frac{\lambda\,\mathbf{h} (\lambda)}{(\lambda+1)^2}(\lambda-A)^{-1}\,d\lambda\\
&=&\frac{1}{2\pi i}
\int_{\partial \Sigma_{\omega'}}
\frac{\lambda}{(\lambda+1)^2}
\sum_{n=0}^\infty c_n\left[1-\left(\frac{1-\lambda}{1+\lambda}\right)^n\right]
(\lambda-A)^{-1}\,d\lambda\\
&=&\frac{1}{2\pi i}
\sum_{n=0}^\infty c_n\int_{\partial \Sigma_{\omega'}}
\frac{\lambda}{(\lambda+1)^2}
\left[1-\left(\frac{1-\lambda}{1+\lambda}\right)^n\right]
(\lambda-A)^{-1}\,d\lambda\\
&=&\sum_{n=0}^\infty c_n
A(A+1)^{-2}\,[1-((1-A)(1+A)^{-1})^n]\\
&=&A(A+1)^{-2}\sum_{n=0}^\infty c_n
(1-T^n)\\
&=&A(A+1)^{-2}(1-h(T)).
\end{eqnarray*}
This and (\ref{cchh}) imply (\ref{SA}).

From (\ref{SA}) and Theorem \ref{mainT2} it follows that $1-h(T)\in
\Sect(\omega)$, where $\omega\in [0,\pi/2)$ is a sectoriality
angle of $A={\mathcal C}(T)$. Hence, by the spectral mapping
theorem (Theorem \ref{connect}) and Theorem \ref{sectorritt}, $h(T)$ is a Ritt operator
of angle $\omega.$ Thus the first statement of the theorem is proved if $T$ is Ritt
such that $1-T$ is invertible.

Let now $1 \in \sigma(T).$
Then
 consider the approximation family $(T_\epsilon)_{\epsilon\in
 (0,1)}\subset
 \mathcal{L}(X)$ given by
\begin{equation}\label{epsA}
T_\epsilon:=g_\epsilon(T)=((2-\epsilon)T-\epsilon)(2+\epsilon+\epsilon T)^{-1},\qquad \epsilon\in (0,1).
\end{equation}
Observe  that since $T$ is Ritt,
 the spectral mapping theorem for the (standard)
Riesz-Dunford functional calculus and simple geometric
considerations  imply that $$\sigma(T_\epsilon)\subset
\D.$$ Thus $h(T_\epsilon)$ is well-defined in the
$\Wip(\D)$-calculus.

Furthermore, note that
for every $\epsilon \in (0,1),$
\begin{equation}\label{CT}
\mathcal{C}(T_\epsilon)=(1+\epsilon -T+\epsilon T)(1+T)^{-1}=\mathcal{C}(T)+\epsilon,
\end{equation}
so that $\mathcal{C}(T_\epsilon) \in {\rm Sect}(\omega).$
Moreover,
\begin{equation}\label{eps0}
\lim_{\epsilon\to
0}\,\|T-T_\epsilon\|=\|\epsilon(1+T)^2(2+\epsilon+\epsilon
T)^{-1}\|=0.
\end{equation}
Hence for every $n\in \Z_+,$
\begin{equation}
\lim_{\epsilon \to 0} \|T^n-T^n_\epsilon \|=0.
\end{equation}
Since, in view of Lemma \ref{norma1},
$$\|h(T_\epsilon)\|=\|h (g_\epsilon(T))\|_{\Wip(\D)} \le
\sum_{n=0}^{\infty} c_n \|g_\epsilon(T)\|^n_{\Wip(\D)}=M h(1)=M,$$
where $M:=\sup_{n \ge 0} \|T^n\|,$
the dominated convergence theorem gives
\begin{equation}\label{approx}
\lim_{\epsilon \to 0}\|h(T)-h(T_\epsilon)\|=0.
\end{equation}
(Note that we do need any composition rule here.)

According to the first part of the proof and \eqref{CT}, for each $\epsilon \in (0,1),$
\begin{equation}\label{equality}
1-h(T_\epsilon)=\mathbf{h}(\mathcal{C}(T_\epsilon))=\mathbf{h}(\mathcal{C}(T)+\epsilon)=\mathbf{h}(A+\epsilon).
\end{equation}
Next we use Theorem \ref{mainT2} again. The estimate given by
\eqref{family} and \eqref{constant} and Proposition \ref{PrBBL}
yield
 \begin{equation}\label{spectrum}
  -\sigma (\mathbf{h}(A+\epsilon)) \subset
\mathbb C \setminus \Sigma_\omega \end{equation}
 and
\begin{equation}\label{uniform}
 \|(z+\mathbf{h}(A+\epsilon))^{-1}\| \le
\frac{M_{\omega'}}{|z|}, \qquad z \in \Sigma_{\omega'},
\end{equation}
for all $\epsilon \in (0,1)$ and $\omega'>\omega,$ where
$M_{\omega'}$ \emph{does not depend} on $\epsilon.$
 Now due to
\eqref{spectrum} and \eqref{equality} we have
$\sigma(h(T_\epsilon)) \subset 1-\overline{\Sigma}_\omega$, so
that by \eqref{approx},
$$
\sigma(h(T)) \subset 1-\overline{\Sigma}_\omega.
$$
Moreover, \eqref{equality} and \eqref{uniform} imply
\begin{equation}\label{uniform1}
\|(z-1+h(T_\epsilon))^{-1}\| \le \frac{M_{\omega'}}{|z|}, \qquad z
\in \Sigma_{\omega'},
\end{equation}
for all $\epsilon \in (0,1)$ and $\omega'>\omega.$ Then, by
\eqref{approx} and \eqref{uniform1},
\begin{equation*}
 \lim_{\epsilon \to 0} (z-1+h(T_\epsilon))^{-1}=(z-1+h(T))^{-1}
\end{equation*}
strongly in ${\mathcal L}(X)$ for each $z \in \Sigma_\omega.$
Therefore, for all $\omega'>\omega$ and $z \in \Sigma_{\omega'},$
$$
\|(z-1+h(T))^{-1}\| \le \liminf_{\epsilon \to 0}
\|(z-1+h(T_\epsilon))^{-1}\| \le  \frac{M_{\omega'}}{|z|}.
$$
In other words, $h(T)$ is a Ritt operator of angle $\omega$.

Let us now prove the claim about Stolz type. Assume that $T$ is a
Ritt operator of Stolz type $\sigma.$ Then by the preceding part
of the proof and Proposition \ref{Stangle} the operator $h(T)$ is
Ritt of angle $\alpha=\arccos(1/\sigma)$. Hence for any $\beta\in
(\alpha,\pi),$
\begin{equation}\label{ApAr}
\|(z-h(T))^{-1}\|\le \frac{C_\beta}{|z-1|},\qquad
1-z\not\in \overline{\Sigma}_\beta.
\end{equation}

On the other hand, by Theorem \ref{connect} and
Proposition \ref{Stangle1}, we conclude that
\begin{equation}\label{SpM}
\sigma(h(T))\subset \overline{S}_\sigma.
\end{equation}
Let $\delta>\sigma$ be fixed, and let $
\sigma_0:=(\sigma+\delta)/2$ and $\alpha_0:=\arccos(1/\sigma_0).$
Then, by \eqref{ApAr} and \eqref{stolsec}, there is $C_{\alpha_0}\ge 1$ such that
\[
\|(z-h(T))^{-1}\|\le \frac{C_{\alpha_0}}{|z-1|}, \qquad 1-z\not\in \overline {\Sigma}_{\alpha_0}.
\]
If $z \in \D \setminus S_{\delta}$ and $1-z \in
\overline{\Sigma}_{\alpha_0},$ then a simple calculation shows that
\[{\rm Re} (1-z) \ge \frac{2\delta(\delta-\sigma_0)}{(\delta^2-1)\sigma_0^2}.\]
Therefore, the distance between $(\D \setminus S_{\delta})\cap(1-\overline{\Sigma}_{\alpha_0})$ and $S_\sigma$ is positive, and,
in view of \eqref{SpM},
\[
\|(z-h(T))^{-1}\|\le {\tilde C}, \qquad
z \not\in {S_{\delta}},\quad 1-z \in
\overline{\Sigma}_{\alpha_0},
\]
for some ${\tilde C}> 0.$
Taking into account
\[
\C\setminus S_\delta\subset \{z \in \C:\,1-z \not\in
\overline {\Sigma}_{\alpha_0}\}\cup \{z  \in \C:
\,z\not\in {S}_\delta,\quad 1-z \in
\overline{\Sigma}_{\alpha_0}\},
\]
we conclude that the operator $h(T)$ satisfies  \eqref{dopE}. As the choice of
$\delta>\sigma$ is arbitrary, $h(T)$ is of Stolz type $\sigma.$
\end{proof}

\section{Hausdorff functions of Ritt operators: improving properties}\label{improvesec}

As we mentioned in the introduction, our technique allows one  to
characterize improving properties of certain $\Wip(\D)$-functions,
namely of its subclass consisting of Hausdorff functions. In
particular, the following simple geometric criterion holds.
\begin{thm}\label{Dis}
Let $h$ be a non-constant regular Hausdorff function, and let
 $\gamma\in (0,\pi/2)$ be fixed. The following statements are
 equivalent.
 \begin{itemize}
\item [(i)] One has
\begin{equation}\label{ConM}
1-h(\lambda)\subset \overline{\Sigma}_\gamma,\qquad \lambda\in \D.
\end{equation}
\item [(ii)] For every Banach space $X$ and every power
bounded operator $T$ on $X$  the operator $h(T)$ is Ritt  of angle
$\gamma$.
\end{itemize}
\end{thm}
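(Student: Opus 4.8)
The plan is to prove (ii)$\Rightarrow$(i) by testing on scalar operators, and (i)$\Rightarrow$(ii) by passing to the \emph{complete Bernstein} function attached to $h$ and invoking the quantitative sectoriality estimates of Theorem~\ref{MT10}, together with the approximation $T\rightsquigarrow rT$.

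For (ii)$\Rightarrow$(i) I would fix $\lambda\in\D$ and apply (ii) with $X=\mathbb C$ and $T=\lambda I$, which is power bounded since $|\lambda|<1$; then $h(T)=h(\lambda)I$, and the resolvent bound \eqref{RRR} for this scalar operator can hold for every $\beta\in(\gamma,\pi/2)$ only if $h(\lambda)\in(1-\overline{\Sigma}_\beta)\cap\overline{\D}$ for all such $\beta$, i.e. $1-h(\lambda)\in\overline{\Sigma}_\gamma$. As $\lambda\in\D$ is arbitrary, (i) follows.

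For (i)$\Rightarrow$(ii) I would work with $\mathbf h(w):=1-h(\mathcal{C}(w))=1-\sum_n c_n\big((1-w)/(1+w)\big)^n$. First, $\mathbf h\in\mathcal{CBF}$: indeed $\psi:=1-h(1-\cdot)\in\mathcal{CBF}$ by Proposition~\ref{prop1}, the map $w\mapsto\frac{2w}{1+w}=2\cdot\frac{w}{1+w}$ is a complete Bernstein function, and $\mathbf h=\psi\circ\big(\frac{2w}{1+w}\big)$, so Theorem~\ref{cbfprop}(ii) applies (equivalently $\mathbf h\in\mathcal{D}_{\pi/2}(0,1)\cap\mathcal{NP_+}$ by Lemma~\ref{CorD}). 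Since $\mathcal{C}$ maps $\mathbb C_+$ bijectively onto $\D$, hypothesis (i) is precisely $\mathbf h(\mathbb C_+)\subset\overline{\Sigma}_\gamma$. Now let $T$ be power bounded on $X$ with $\sup_n\|T^n\|=M$, and for $r\in(0,1)$ put $T_r:=rT$: then $\sup_n\|T_r^n\|\le M$, $\sigma(T_r)\subset r\overline{\D}$ is compact in $\D$, so $A_r:=\mathcal{C}(T_r)$ is bounded with $\ker A_r=\ker(1-T_r)=\{0\}$ and $\sigma(A_r)=\mathcal{C}(\sigma(T_r))$ is compact in $\mathbb C_+$, away from $0,\infty$ and the imaginary axis; hence $A_r$ is sectorial of some angle $<\pi/2$, while by Proposition~\ref{Lm} it lies in $\Sect(\pi/2)$ with a constant depending only on $M$ and $\|T\|$, i.e. uniformly in $r$. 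Applying Theorem~\ref{MT10} to $\mathbf h\in\mathcal{CBF}$ (with $\omega=\gamma$) and $A=A_r$ would give $\mathbf h(A_r)\in\Sect(\gamma)$ with resolvent constants controlled by $M_q(A_r)$ alone (Proposition~\ref{PrBBL}), hence uniform in $r$; and the computation in the proof of Theorem~\ref{mainT1} (legitimate because $A_r$ is sectorial of angle $<\pi/2$, injective, $h$ a convex power series, and $\mathcal{C}(A_r)=\mathcal{C}(\mathcal{C}(T_r))=T_r$ by \eqref{ATeq}) identifies $\mathbf h(A_r)=1-h(T_r)=1-h(rT)$. Thus $1-h(rT)\in\Sect(\gamma)$ with $r$-uniform resolvent bounds.

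Finally I would let $r\uparrow1$. Since $h\in\Wip(\D)$ and $\|(rT)^n-T^n\|=(1-r^n)\|T^n\|\le(1-r^n)M$, dominated convergence gives $\|h(rT)-h(T)\|\to0$; combined with the $r$-uniform bounds (and \eqref{kato} for the spectra), a standard limiting argument yields $\sigma(h(T))\subset1-\overline{\Sigma}_\gamma$ and $1-h(T)\in\Sect(\gamma)$. Moreover $\sigma(h(T))=h(\sigma(T))$ by Proposition~\ref{connect}, and by continuity of $h$ on $\overline{\D}$ and (i), $h(\sigma(T))\subset h(\overline{\D})\subset1-\overline{\Sigma}_\gamma$; and $h$ being non-constant and satisfying (i) excludes the affine case $h=c_0+c_1\lambda$ (for which $1-h(\D)=c_1(1-\D)$ fails to lie in any $\overline{\Sigma}_\gamma$, $\gamma<\pi/2$), so $c_n>0$ for all $n\ge1$, whence $|h(\lambda)|<1$ on $\overline{\D}\setminus\{1\}$ and $\sigma(h(T))\subset\D\cup\{1\}$. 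Theorem~\ref{sectorritt} then shows $h(T)$ is Ritt of angle $\gamma$. The step I expect to be the main obstacle is securing the uniformity in $r$: this is exactly why the argument must run through the complete Bernstein function $\mathbf h$ and Theorem~\ref{MT10}, whose bounds involve only $M_q(A_r)$, rather than Theorem~\ref{mainT}, whose constants would degenerate through the factor $\|A_r\|=\|\mathcal{C}(rT)\|$, which blows up as $r\uparrow1$ whenever $-1\in\sigma(T)$.
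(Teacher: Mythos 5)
Your proposal is correct, and its core for (i)$\Rightarrow$(ii) is the same as the paper's: pass to the complete Bernstein function $\mathbf h=\psi\circ\varphi$ with $\varphi(w)=2w/(1+w)$ (so that hypothesis (i) becomes $\mathbf h(\C_+)\subset\overline{\Sigma}_\gamma$), apply Theorem~\ref{MT10} to $\mathbf h$ at the Cayley transform of a scaled copy of $T$, identify the result with $1-h$ of that copy, and pass to the limit using the $\epsilon$- (resp.\ $r$-) uniform constants together with \eqref{kato} and Proposition~\ref{connect}; the paper's own approximation is exactly $T_\epsilon=(1-\epsilon)T$. Where you genuinely deviate: (a) for the identification you rerun the contour computation from the proof of Theorem~\ref{mainT1} for $A_r=\mathcal C(rT)$, whereas the paper uses Lemma~\ref{HaTol} plus the composition rule (Proposition~\ref{fcrules}(ii)) to write $\psi(1-T)=(\psi\circ\varphi)(\mathcal C(T))$; your route avoids the dense-range/injectivity bookkeeping behind Lemma~\ref{HaTol} and the composition rule, and also avoids the paper's appeal to Theorem~\ref{GTmain}~b), at the price of having to check that $A_r$ is bounded, injective and sectorial of angle $<\pi/2$ (your compact-spectrum argument is fine, but spell out the resolvent bounds: compact spectrum inside $\C_+$ alone is what makes $A_r\in\Sect(\omega_r)$, $\omega_r<\pi/2$, for a bounded operator); (b) you run the single approximation $rT$ for all $T$ instead of the paper's two-step case distinction; (c) for (ii)$\Rightarrow$(i) you test on scalars $T=\lambda I$ on $X=\C$, which is simpler than the paper's (Dungey's) multiplication operator on $C(\overline\D)$ with the sectorially bounded semigroup argument, and it is valid because $|h(\lambda)|<1$ for $\lambda\in\D$ forces $h(\lambda)\in 1-\overline{\Sigma}_\beta$ for every $\beta>\gamma$ from \eqref{RRR}. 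Two points you assert but should make explicit, though they are exactly the implicit steps the paper itself relies on: the uniformity in $r$ of $M_q(A_r)$ holds because the constant of Proposition~\ref{PrBBL} depends only on the $\Sect(\pi/2)$ constants of $A_r$, which \eqref{A0} bounds by $3M(1+\|T\|)$ independently of $r$; and your observation that Theorem~\ref{MT10} (with $b=1$, $m=2\tan(\beta/2)$) must be used instead of Theorem~\ref{mainT2}, whose constant degenerates through $\|A_r\|$, is precisely the reason the paper routes Theorem~\ref{Dis} through complete Bernstein functions.
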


\begin{proof}

Let us first prove that (i) implies (ii). Let
$\psi$ be defined by
\begin{equation}\label{ConM1}
\psi(\lambda):=1-h(1-\lambda), \qquad \lambda \in \D,
\end{equation}
and denote its extension to $\mathbb C\setminus (-\infty,0]$ given by Proposition
\ref{prop1} by the same symbol. Thus, $\psi \in \mathcal{CBF}$
and by assumption
\begin{equation}\label{an}
\psi(\lambda)\in \overline{\Sigma}_\gamma,\qquad \lambda\in \D_1=:1+\D=\{\lambda\in
\C:\,|\lambda-1|< 1\}.
\end{equation}
As for Theorem \ref{mainT1}, the proof will be done in two steps. Suppose first that
\begin{equation}\label{inclus}
\{-1, 1\} \not \subset \sigma(T).
\end{equation}

If $ \varphi(\lambda):=\frac{2\lambda}{\lambda+1}, $ then $\varphi \in
\mathcal{CBF},$ and $\psi\circ\varphi \in \mathcal{CBF}$ as a
composition of complete Bernstein functions. Moreover, since
$\varphi:\mathbb C_+ \to \D_1,$  it follows from \eqref{an} that
\[
(\psi\circ\varphi)(\C_{+})\subset \overline{\Sigma}_\gamma.
\]

Noting that
\[
\varphi^{-1}(\lambda)=\frac{\lambda}{2-\lambda},\quad \lambda\not=2,
\]
and setting  $Q:=1-T$, we conclude by Proposition \ref{Lm} that
\[
\varphi^{-1}(Q)=Q(2-Q)^{-1}=\mathcal C (T)\in \Sect(\pi/2),
\]
and, by Theorem \ref{GTmain}, b)  and the composition rule \eqref{comprule}, we
obtain that
\[
\psi(Q)=(\psi\circ\varphi)(\varphi^{-1}(Q))=(\psi\circ\varphi)({\mathcal
C}(T)) \in \Sect(\gamma).
\]
Furthermore, by Lemma \ref{HaTol},
\begin{equation}\label{equalit}
\psi(Q)=1-h(T),
\end{equation}
where $\psi(Q)$ is defined in the extended holomorphic functional calculus,
and $h(T)$ is given by $\Wip(\D)$-calculus.

Observe that by \eqref{disc}, $h(\overline \D)\subset \overline \D.$
Moreover, if $\lambda \in \overline \D$ and $|h(\lambda)|=1,$ then
\[
1=\left|\sum_{k=0}^{\infty} c_k \lambda^k \right|\le \sum_{k=0}^{\infty} c_k=1,
\]
and $\lambda=h(\lambda)=1,$ so that $h(\overline \D)\subset \D \cup\{1\}.$
From  Theorem \ref{connect} it then follows that
$\sigma(h(T))\subset \D\cup\{1\}$, and  by Theorem \ref{MT10} we conclude that
$h(T)$ is a
Ritt operator of angle $\gamma$. Thus the statement is proved
for power bounded $T$ such that \eqref{inclus} holds.

If  \eqref{inclus} does not hold, then we let $\sup_{n \ge 0} \|T^n\|:=M$ and consider the family of bounded
linear operators $T_\epsilon:=(1-\epsilon)T,
\epsilon\in (0,1). $ Clearly, $\sigma(T_\epsilon) \subset
(1-\epsilon)\overline \D$ for each $\epsilon \in (0,1)$
 and
\begin{equation}\label{el1}
\|(z-T_\epsilon)^{-1}\|=(1-\epsilon)^{-1}
\|((1-\epsilon)^{-1}z-T)^{-1}\|
\le \frac{M}{|z|-1},\qquad |z|>1.
\end{equation}
Hence, by the first step, if $Q_\epsilon=1-T_\epsilon$ then
$\psi(Q_\epsilon)\in \Sect(\gamma)$.
Moreover, as  $\|T_\epsilon\|\le \|T\|$ and (\ref{el1}) holds,
by (\ref{A0}) and Theorem \ref{MT10} again we infer that for each $\beta\in (\gamma,\pi)$
there exists  $M_\beta$  \emph{independent} of $\epsilon>0$ such that
\begin{equation}\label{zv}
\|(z+\psi(Q_\epsilon))^{-1}\|\le \frac{M_\beta}{|z|},\qquad
z \in \Sigma_{\pi-\beta}.
\end{equation}
Thus, taking into account  \eqref{equalit}, we infer that
\begin{equation}\label{zv1}
\|(1-h(T_\epsilon)+z)^{-1}\|\le \frac{M_\beta}{|z|},\qquad
z \in \Sigma_{\pi-\beta},
\end{equation}
for each $\beta \in (\gamma,\pi).$
Moreover, since $h \in \Wip(\D),$ we have
\begin{equation}\label{hhh}
\lim_{\epsilon \to 0}\|h(T_\epsilon)-h(T)\|=0,
\end{equation}
hence Proposition \ref{connect} and \eqref{kato} yield
\begin{equation}\label{spect}
\sigma(h(T))\subset 1 -\overline{\Sigma}_\gamma.
\end{equation}
Now, combining \eqref{zv1} and \eqref{hhh}, we infer that
\begin{equation}
(1-h(T) +z)^{-1}=\lim_{\epsilon \to 0
}(1-h(T_\epsilon)+z)^{-1}
\end{equation}
strongly in ${\mathcal L}(X)$ for every nonzero $z \in \mathbb C \setminus \overline{\Sigma}_{\pi-\gamma}.$
Therefore,
\begin{equation}\label{resolv}
\|(1-h(T) +z)^{-1}\|\le \liminf_{\epsilon \to 0
}\|(1-h(T_\epsilon)+z)^{-1}\| \le \frac{M_\beta}{|z|}, \qquad z \in \Sigma_{\pi-\beta},
\end{equation}
for every $\beta \in (\gamma,\pi).$ Now, \eqref{spect} and
\eqref{resolv} imply  the claim.

 The implication (ii) $\Rightarrow$ (i) is proved in
  \cite[p. 1728]{Dungey}. It suffices to consider the
  multiplication operator $(T f)(\lambda)=\lambda f(\lambda), \lambda \in \D,$ on $X =C(\overline{\D})$
  and to use the fact that if $h(T)$ is Ritt of angle $\omega,$ then the
  multiplication semigroup $(e^{(1-h(T))t})_{t \ge 0}:$
$$
(e^{(1-h(T))t} f)(\lambda)=e^{(1-h(\lambda))t}f(\lambda), \qquad \lambda \in \overline
{\D},
$$
  is sectorially bounded on $\Sigma_{\omega'}$ for every $\omega' >\omega$.
\end{proof}

As an illustration of Theorem \ref{Dis},  we show how several main results
 from \cite{Dungey} can be obtained by  our technique and answer a question
posed in \cite{Dungey}. Moreover, we  show that Theorem \ref{Dis}
provides ``geometrical'' improvements of the results from
\cite{Dungey}.

\begin{example}\label{rem1}
By Example \ref{discex}, a) the function
$h_\alpha(\lambda)=1-(1-\lambda)^\alpha$ is regular Hausdorff for every
$\alpha \in (0,1).$ Moreover,
$$(1-h_\alpha)(\D) \subset \overline{\Sigma}_{\alpha \pi/2}.$$ Thus,
for any power-bounded operator $T$ on $X$ and any $\alpha\in
(0,1),$ the operator $h_\alpha (T)$ is Ritt of angle $\alpha
\pi/2.$ Clearly,
 Theorem \ref{Dis}  extends
 \cite[Theorem 1.1 and Theorem 4.3]{Dungey}, where the special case of
Theorem \ref{Dis} for $h_\alpha, \alpha \in (0,1),$ was
considered.
\end{example}

\begin{example}\label{zeta}
For $\alpha>0$ define
\[
L_{1+\alpha}(\lambda):=\frac{1}{\zeta(1+\alpha)}\sum_{n=1}^\infty
\frac{\lambda^n}{n^{1+\alpha}},
\]
where $\zeta$ is the zeta function. The functions $L_{1+\alpha}$
arise as generating functions for so-called zeta-probabilities
\cite{Dungey}. They are also related to fractional polylogarithms,
see e.g. \cite{Costin}.  As discussed in \cite{Dungey}, they
appear to be useful in probability theory.

Clearly, $L_{1+\alpha} \in \Wip(\D)$ for every $\alpha >0.$ It was
proved in \cite[Theorem 1.2 and Theorem 4.4]{Dungey} that for
every $\alpha\in (0,1)$ and any power-bounded operator $T$ on $X$
the operator $L_{1+\alpha}(T)$ is Ritt. We show that this result
follows from Theorem \ref{Dis} and, moreover, we are able to
provide a bound for the corresponding angle of $L_\alpha(T)$. To
this aim, note that since for every $k\in \N,$
\[
\frac{1}{k^{1+\alpha}}=\frac{1}{\Gamma(1+\alpha)}\int_0^\infty e^{-kt}t^{\alpha}\,dt
=\frac{1}{\Gamma(1+\alpha)}\int_0^1 (\log (1/s))^\alpha s^{k-1}\,ds,
\]
the function $L_{1+\alpha}$ is  Hausdorff for each $\alpha \in
(0,1)$. Moreover, by \cite[p 29, 1.11 (8)]{Bateman2} we have
\begin{align}\label{FAs}
\zeta(1+\alpha)L_{1+\alpha}(\lambda)=&\lambda\Phi(\lambda,1+\alpha;1)\\
=&\zeta(1+\alpha)+\Gamma(-\alpha)(\log(1/\lambda))^\alpha+
\mbox{O}(|\lambda-1|), \notag
\end{align}
as $\lambda\to 1, \lambda \in \overline{\D},$ where $\Phi$ is the Lerch zeta
function (see e.g. \cite{Ap}). Taking into account the inequality
\[
|L_{1+\alpha}(\lambda)|\le 1,\qquad \lambda \in \overline{\D},
\]
and (\ref{FAs}), we infer that
\[
1-L_{1+\alpha}(\lambda)\in \overline{\Sigma}_\beta,\qquad \lambda \in
\overline{\D},
\]
for some $\beta=\beta(\alpha)\in (0,\pi/2)$. Hence, by Theorem
\ref{Dis}, for any power-bounded operator $T$ on $X$  and any
$\alpha\in (0,1)$ the operator $L_{1+\alpha}(T)$ is Ritt of angle
$\beta(\alpha)$.

However, we can be more precise here. By combining Theorem
\ref{Dis} with Proposition \ref{Lfun} from Appendix B, we conclude
that $L_{1+\alpha}(T)$ is of angle $\beta(\alpha)=\alpha\pi/2.$
\end{example}

\begin{example}\label{eps}
Let for a fixed $\epsilon\in (0,1),$
\begin{equation}\label{int1}
h_\epsilon(\lambda):=1-\frac{1}{\epsilon}
\int_0^\epsilon (1-\lambda)^\alpha\,d\alpha=1-\frac{(1-\lambda)^\epsilon-1}{\epsilon\log(1-\lambda)},\qquad \lambda\in \D.
\end{equation}
The function $h_\epsilon$ extends holomorphically to
$\C\setminus (-\infty,0],$ and denoting the extension by the same
symbol we infer that
\[
f_\epsilon(\lambda):=1-h_\epsilon(1-\lambda)=\frac{\lambda^\epsilon-1}{\log
\lambda^\epsilon},\qquad \lambda > 0,
\]
belongs to $\mathcal{CBF}$ as a composition of the complete
Bernstein functions $(\lambda-1)/\log \lambda$ and $\lambda^\epsilon.$ If $\lambda\in
\C_{+}$, then $\lambda^\epsilon \in \overline{\Sigma}_{\epsilon\pi/2},$
hence in view of the integral representation in \eqref{int1},
$
f_\epsilon(\C_{+})\in \overline{\Sigma}_{\epsilon\pi/2}.
$
Thus, by Example  \ref{discex}, b), the function
$h_\epsilon$ is regular Hausdorff and
$$(1-h_\epsilon)(\D) \subset \overline{\Sigma}_{\epsilon\pi/2}.$$

By  Theorem \ref{Dis}, we conclude that if $T$ is a power-bounded
operator on $X$, then $h_\epsilon(T)$ is Ritt of angle
$\epsilon\pi/2$. This settles a conjecture posed in \cite[p.
1735]{Dungey}. Note that Theorem 5.1 in \cite{Dungey} can also be
treated in a similar way, but we leave the details to the interested
reader.

Note that the angle $\epsilon\pi/2$ given by Theorem \ref{Dis} is
not optimal. For instance, for a Hausdorff function $h_1,$
\[
h_1(\lambda)=1-f_1(1-\lambda)=1+\frac{\lambda}{\log(1-\lambda)},\qquad \lambda\in \D,
\]
we have by Lemma \ref{QBC} from Appendix B:
\[
(1-h_1)(\D)\subset \overline{\Sigma}_{\pi/3}.
\]
So, by Theorem \ref{Dis}, we obtain that for any power-bounded
operator $T$ on $X$ the operator $h_1(T)$ is Ritt of angle
$\pi/3$.

Remark, finally, that \cite{Dungey} deals with the elements of
$\ell_1(\Z_+)$
 given by $\frac{1}{\epsilon}
\int_0^\epsilon A_\alpha \,d\alpha,$ where $A_\alpha \in
\ell_1(\Z_+)$  is the sequence of Taylor coefficients
of $(1-z)^\alpha,$ rather than the generating function of
$\frac{1}{\epsilon}
\int_0^\epsilon A_\alpha \,d\alpha.$ However, since $\Wip(\D)$ and $\ell_1(\Z_+)$ are isometrically
isomorphic as Banach algebras, both settings are, in fact, equivalent.
\end{example}

\section{A remark on angles of Ritt operators}

In this section, by means of a simple example, we illustrate the
statement of  Theorem \ref{mainT1} on angles of Ritt operators. To this aim, we introduce the following notation.
Let the \emph{minimal} angle $\alpha(T)$ of a Ritt operator $T$ on $X$ be defined as
$$
\alpha(T)=\inf\{\alpha: T \, \text{is Ritt of angle} \, \alpha\}.
$$
 We
show that there exists a Ritt operator  $T$ with the minimal angle $\alpha(T)$
and a function $h$ satisfying \eqref{hc} such that $h(T)$ is a
Ritt operator with the minimal angle $\alpha(h(T))$ greater than $\alpha(T)$. Moreover, the difference $\alpha(h(T))-\alpha(T)$ can be arbitrarily close to $\pi/2.$ Thus
discrete subordination does not, in general, preserve angles of
Ritt operators. This justifies, in particular, the use of the Cayley transform and Stolz types in
the study of permanence properties for discrete subordination.

In the notation of Theorem \ref{mainT1}, let
\[
h(\lambda):=\lambda^2,\qquad \lambda \in \D,
\]
and for $\varphi\in(0,\pi/2)$ and  $\rho\in (0,2\cos\varphi)$ set $ \lambda=1-\rho e^{i\varphi}.$
Note that
\begin{align}\label{So}
\frac{|{\rm Im}\, (1-h(\lambda))|}{\tan\varphi\,{\rm Re}\, (1-h(\lambda))}
=&\frac{|2\sin\varphi-\rho\sin(2\varphi)|}
{\tan\varphi \,(2\cos\varphi-\rho\cos (2\varphi))}\\
=&\frac{|1-2t\cos^2\varphi|} {1-t\cos (2\varphi)},\notag
\end{align}
where
$t=\rho/(2\cos\varphi)$ so that $t \in (0,1).$

On the other hand,
\begin{eqnarray}\label{cab}
\frac{|{\rm Im}\,\mathcal{C}(\lambda)|}{\tan\varphi\,{\rm Re}\,\mathcal{C}(\lambda)}=
\frac{2\cos\varphi}{2\cos\varphi-\rho}=
\frac{1}{1-t}.
\end{eqnarray}

Let $X=L^2((0,1)).$ For fixed $\varphi\in (0,\pi/2)$ and $\delta\in
(0,1)$ define the operator $T_{\varphi,\delta}$ on $X$ by
\begin{equation}\label{Fu}
(T_{\varphi,\delta}y)(t):= (1-2t\delta\cos\varphi
e^{i\varphi})y(t),\qquad  y\in L^2((0,1)).
\end{equation}

\begin{thm}\label{ShT}
Let $X$ and $T_{\varphi,\delta} \in {\mathcal L}(X)$  be defined by \eqref{Fu}. Then
\begin{itemize}
\item [(i)] $T_{\varphi,\delta}$ is a Ritt operator of the minimal angle $\alpha(T_{\varphi,\delta})=\varphi.$
\item [(ii)] for each $\epsilon\in (0,1)$ there exist
 $\varphi\in (0,\pi/2)$ and $\delta\in (0,1)$ such that
$T_{\varphi,\delta}^2$ is a Ritt operator of the minimal angle
$\alpha(T_{\varphi,\delta}^2):=\beta_{\varphi,\delta}$ satisfying
\begin{equation}\label{MtT}
\tan\beta_{\varphi,\delta}\ge \frac{\tan\varphi}{\epsilon}.
\end{equation}
Hence, $\beta_{\varphi,\delta} - \tan\varphi$ can be arbitrarily close to
$\pi/2.$
\item [(iii)] If $\gamma_{\varphi,\delta}$ is the minimal sectoriality angle of ${\mathcal
C}(T_{\varphi,\delta}),$ then $\gamma_{\varphi,\delta}$ can be arbitrarily
close to $\beta_{\varphi,\delta}.$
\end{itemize}
\end{thm}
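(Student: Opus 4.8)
The plan is to use that $T_{\varphi,\delta}$ is a multiplication operator on $X=L^2((0,1))$. Writing $g(t):=1-2t\delta\cos\varphi\,e^{i\varphi}$, one has $\sigma(T_{\varphi,\delta})=g([0,1])$, and for every $z\notin g([0,1])$
\[
\|(z-T_{\varphi,\delta})^{-1}\|=\frac{1}{\dist(z,\sigma(T_{\varphi,\delta}))};
\]
the same identity holds verbatim for $T_{\varphi,\delta}^2=M_{g^{2}}$ and for $\mathcal{C}(T_{\varphi,\delta})=M_{\mathcal{C}\circ g}$ (the latter bounded since $-1\notin g([0,1])$). Thus \emph{every} resolvent estimate needed below reduces to the elementary geometric fact that if a compact arc lies inside a sector $\overline{\Sigma}_{\theta}$ and issues from the vertex (the point $1$ for the Ritt property, $0$ for sectoriality), then it stays a fixed proportion of $|1-z|$, resp.\ $|z|$, away from the complement of any slightly larger sector $\overline{\Sigma}_{\theta'}$, $\theta'>\theta$. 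The only analytic inputs are the two identities \eqref{So} and \eqref{cab} together with the direct computations $1-g(t)=2t\delta\cos\varphi\,e^{i\varphi}$, $1-g(t)^2=2t\delta\cos\varphi\,e^{i\varphi}(2-2t\delta\cos\varphi\,e^{i\varphi})$, and $\mathcal{C}(g(t))=t\delta\cos\varphi\,e^{i\varphi}/(1-t\delta\cos\varphi\,e^{i\varphi})$.

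For (i): since $1-\sigma(T_{\varphi,\delta})=\{se^{i\varphi}:0\le s\le 2\delta\cos\varphi\}$ is a segment of the ray $\{\arg\lambda=\varphi\}$, and $2\delta\cos\varphi<2\cos\varphi$ forces $|g(t)|<1$ off $t=0$, we get $\sigma(T_{\varphi,\delta})\subset(\D\cup\{1\})\cap\{z:1-z\in\overline{\Sigma}_\varphi\}$, and for $\beta\in(\varphi,\pi/2)$ and $z$ with $1-z\notin\overline{\Sigma}_\beta$ one has $\dist(z,\sigma(T_{\varphi,\delta}))\ge\dist(1-z,\overline{\Sigma}_\varphi)\ge|1-z|\sin(\beta-\varphi)$; hence $T_{\varphi,\delta}$ is Ritt of angle $\varphi$, so $\alpha(T_{\varphi,\delta})\le\varphi$. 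Conversely $1-\sigma(T_{\varphi,\delta})$ meets $\{\arg\lambda=\varphi\}$, so it is not contained in $\overline{\Sigma}_{\varphi'}$ for any $\varphi'<\varphi$, whence $\alpha(T_{\varphi,\delta})=\varphi$.

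For (ii) and (iii): the same two-line geometric argument shows that $T_{\varphi,\delta}^2$ is Ritt with minimal angle exactly $\beta_{\varphi,\delta}=\sup_{t\in[0,1]}|\arg(1-g(t)^2)|$, and that $\mathcal{C}(T_{\varphi,\delta})$ is sectorial with minimal angle exactly $\gamma_{\varphi,\delta}=\sup_{t\in[0,1]}|\arg\mathcal{C}(g(t))|$ (here one uses that $1-g(t)^2$ and $\mathcal{C}(g(t))$ have strictly positive real part for $t\in(0,1]$, so the supremized arguments lie in $(-\pi/2,\pi/2)$). Putting $u:=t\delta\in[0,\delta]$, identity \eqref{So} gives
\[
\tan\beta_{\varphi,\delta}=\tan\varphi\cdot\max_{u\in[0,\delta]}\frac{|1-2u\cos^2\varphi|}{1-u\cos2\varphi},
\]
where a short monotonicity check (the quotient has derivative $-(1-u\cos2\varphi)^{-2}$ on $[0,(2\cos^2\varphi)^{-1}]$, vanishes there, and derivative $+(1-u\cos2\varphi)^{-2}$ afterwards) shows the maximum is attained at $u=\delta$ once $\delta$ is close to $1$; likewise \eqref{cab} gives $\tan\gamma_{\varphi,\delta}=\tan\varphi/(1-\delta)$. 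Now choose $\delta=\delta(\varphi)$ with $1-\delta$ of order strictly between $\varphi^{2}$ and $\varphi$, say $\delta=1-\varphi^{3/2}$, and let $\varphi\to0^{+}$. Then $\tan\gamma_{\varphi,\delta}\sim\varphi^{-1/2}$, and since $2\sin^2\varphi\ll 1-\delta$ we also get $\tan\beta_{\varphi,\delta}=\tan\varphi\cdot\dfrac{2\delta\cos^2\varphi-1}{1-\delta\cos2\varphi}\sim\varphi^{-1/2}$; hence $\tan\varphi/\tan\beta_{\varphi,\delta}\to0$, which yields \eqref{MtT} and $\beta_{\varphi,\delta}\to\pi/2$, while $\tan\beta_{\varphi,\delta}/\tan\gamma_{\varphi,\delta}=(1-\delta)\dfrac{2\delta\cos^2\varphi-1}{1-\delta\cos2\varphi}\to1$ with both angles tending to $\pi/2$, which forces $\gamma_{\varphi,\delta}-\beta_{\varphi,\delta}\to0$, proving (iii). (That $\beta_{\varphi,\delta}\le\gamma_{\varphi,\delta}$ for all $\varphi,\delta$ is in any case automatic from Theorem~\ref{mainT1}.)

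The calculations are all routine; the points requiring care are: (a) stating the distance-to-a-sector estimates underpinning the three resolvent bounds so that they simultaneously cover $z\in\C\setminus\overline{\D}$ and $z$ near the vertex; (b) checking that the maximum of $(1-u\cos2\varphi)^{-1}|1-2u\cos^2\varphi|$ over $u\in[0,\delta]$ sits at the right endpoint and extracting its size under the scaling $1-\delta\asymp\varphi^{3/2}$; and (c) the elementary observation that $\tan\beta_n/\tan\gamma_n\to1$ together with $\beta_n,\gamma_n\to\pi/2$ forces $\gamma_n-\beta_n\to0$. None of these is a genuine obstacle, but (a) and (b) carry the bookkeeping.
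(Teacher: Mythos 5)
Your proposal is correct and follows essentially the same route as the paper: it exploits the multiplication-operator structure on $L^2((0,1))$ together with the identities \eqref{So} and \eqref{cab}, computes $\tan\gamma_{\varphi,\delta}=\tan\varphi/(1-\delta)$ and the spectral maximization for $\tan\beta_{\varphi,\delta}$, and then couples $\varphi\to 0$, $\delta\to 1$ suitably. The only differences are cosmetic: your coupling $\delta=1-\varphi^{3/2}$ replaces the paper's choice $\cos 2\varphi=\delta^{\epsilon/2}$ with $\delta\to 1$, and you obtain (ii) directly and (iii) from the tangent ratio, whereas the paper proves (iii) first and deduces (ii) from it.
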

\begin{remark}
Recall that $T_{\varphi,\delta}^2$ is Ritt of angle
$\gamma_{\varphi,\delta}$ by Theorem \ref{mainT1}.
\end{remark}

\begin{proof}

A direct  calculation shows that $T_{\varphi,\delta}$ is a Ritt
operator of the minimal angle $\varphi,$ and thus proves (i). Hence,
by \eqref{cab},
\begin{equation}\label{alpgam}
\tan\gamma_{\varphi,\delta}=\frac{\tan\varphi}{1-\delta}.
\end{equation}
So, $\gamma_{\varphi,\delta}>\varphi.$

Let
\[
v_\varphi(t):=\frac{|1-2t\cos^2\varphi|} {1-t\cos (2\varphi)},
\]
where $t=\rho/(2\cos\varphi).$
Observe that
by \eqref{So},
\[
\tan\beta_{\varphi,\delta}=\tan\varphi\,\sup_{t\in (
0,1)}\,v_\varphi(\delta t) \ge v_\varphi(\delta)\tan\varphi.
\]
Hence from  \eqref{alpgam} it follows that
\[
\frac{\tan\beta_{\varphi,\delta}}{\tan\gamma_{\varphi,\delta}}
\ge v_\varphi(\delta)
=\frac{(1-\delta)(\delta\cos(2\varphi)+\delta-1)}{1-\delta\cos(2\varphi)}.
\]
Let $\epsilon\in (0,1)$ be fixed and let $\varphi=\arccos \delta^{\epsilon/2}/2.$
Then
\begin{align*}
\limsup_{\delta\to 1}\, \frac{\tan\beta_{\varphi,\delta}}{\tan\gamma_{\varphi,\delta}} &\ge \limsup_{\delta\to 1}\,
\frac{(1-\delta)(\delta^{1+\epsilon/2}+\delta-1)}{1-\delta^{1+\epsilon/2}}\\
 &=1/(1+\epsilon/2)\\
 &>1-\epsilon.
 \end{align*}
 This shows (iii). Now  if  $\delta\in (0,1)$ is such that
\[
\frac{\tan\beta_{\varphi,\delta}}{\tan\gamma_{\varphi,\delta}}>1-\epsilon \quad \mbox{and}\quad 1-\delta<(1-\epsilon)\epsilon,
\]
then
\begin{equation*}
\tan\beta_{\varphi,\delta}>(1-\epsilon)\tan\gamma_{\varphi,\delta}=
\frac{1-\epsilon}{1-\delta}\tan\varphi\ge \frac{\tan\varphi}{\epsilon},
\end{equation*}
and (ii) follows.
\end{proof}

\section{Appendix A}

In this appendix we collect several technical estimates used in
previous sections. The lemma below is crucial in the
characterization of Ritt operators in terms of Stolz domains given
in Proposition \ref{rittchar}.
\begin{lemma}\label{Lfir}
Let $z\in \D$ and  let
\[
Q_z(\varphi):=\frac{|z-e^{i\varphi}|}{|1-e^{i\varphi}|},\qquad \varphi\in
[0,2\pi).
\]
Then
\begin{equation}\label{Fmin}
m_z:=\min_{\varphi\in
[0,2\pi)}\,Q_z(\varphi)=\frac{1-|z|^2}{2|1-z|}.
\end{equation}
\end{lemma}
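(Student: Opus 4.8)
The plan is to pass to squares and observe that $Q_z(\varphi)=|g(e^{i\varphi})|$, where $g$ denotes the M\"obius transformation
\[
g(w):=\frac{z-w}{1-w}.
\]
Since $z\in\D$ we have $z\neq 1$, so $g$ is non-degenerate and $g(1)=\infty$. Hence $g$ maps the unit circle onto a line $L\subset\C$ together with the point at infinity, and therefore it maps the punctured circle $\{w:|w|=1,\ w\neq 1\}$ bijectively onto $L$. Consequently
\[
\min_{\varphi\in[0,2\pi)}Q_z(\varphi)=\inf\bigl\{|g(w)|:|w|=1,\ w\neq 1\bigr\}=\dist(0,L),
\]
and this infimum is actually attained: the foot of the perpendicular from the origin to $L$ is a finite point of $L$, hence equals $g(w_0)$ for a genuine point $w_0\neq 1$ on the circle (the value of $Q_z$ at $\varphi=0$ is $+\infty$ because $z\neq 1$, so it plays no role).

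It then remains to identify $L$ and compute $\dist(0,L)$. I would evaluate $g$ at $w=-1$ and $w=\pm i$; a direct computation gives $g(-1)=\tfrac{z+1}{2}$ and $g(\pm i)=\tfrac{z+1}{2}\pm\tfrac{i(z-1)}{2}$, so $L$ is the line through $p:=\tfrac{z+1}{2}$ with direction vector $v:=i(z-1)$. For such a line $\dist(0,L)=|\im(\bar v\,p)|/|v|$; since
\[
\bar v\,p=-i(\bar z-1)\,\frac{z+1}{2}=-\frac{i}{2}\bigl[(|z|^2-1)-2i\,\im z\bigr],
\]
one reads off $\im(\bar v\,p)=\tfrac12(1-|z|^2)$ and $|v|=|1-z|$, whence $\dist(0,L)=\dfrac{1-|z|^2}{2|1-z|}$, which is precisely \eqref{Fmin}.

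The argument is entirely elementary; the only point that deserves a comment is the reduction of ``infimum of $Q_z$ over the circle'' to ``distance from $0$ to a line'', that is, the fact that $g$ carries the punctured circle onto all of $L$ and that the extremal value is realized. A purely computational alternative is also available: writing $c=\cos\varphi$, $s=\sin\varphi$ and minimizing
\[
Q_z(\varphi)^2=\frac{|z|^2+1-2\re\!\bigl(z e^{-i\varphi}\bigr)}{2-2\cos\varphi}
\]
by a Lagrange multiplier under the constraint $c^2+s^2=1$ leads to a unique critical point, at which the value simplifies---using the identities $|1\pm z|^2=1\pm 2\re z+|z|^2$ and $|1-z^2|^2-4(\im z)^2=(1-|z|^2)^2$---to the right-hand side of \eqref{Fmin}; but the geometric route is the shorter one.
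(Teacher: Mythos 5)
Your argument is correct, and it takes a genuinely different route from the paper. The paper proceeds by brute-force trigonometry: it writes $z=re^{i\alpha}$, substitutes the half-angle $\psi=\varphi/2$, expresses $4Q_z^2(\varphi)$ as a quadratic polynomial in $\cot\psi$, completes the square, and reads off the minimum as $\cot\psi$ ranges over all of $\R$; the simplification $|1-z|^4+4r\cos\alpha\,|1-z|^2-4r^2\sin^2\alpha=(1-r^2)^2$ then gives \eqref{Fmin}. You instead observe that $Q_z(\varphi)=|g(e^{i\varphi})|$ for the M\"obius map $g(w)=(z-w)/(1-w)$, whose pole lies on the unit circle, so the punctured circle is carried bijectively onto a line $L$, and the minimum becomes $\dist(0,L)$; your identification of $L$ through $g(-1)$, $g(\pm i)$ and the computation $\im(\bar v p)=\tfrac12(1-|z|^2)$, $|v|=|1-z|$ are all correct, and you rightly note the attainment issue (the foot of the perpendicular is a finite point of $L$, hence hit by some $\varphi\neq 0$), which is the only place where the reduction needs justification. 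The geometric route is shorter and explains \emph{why} the answer has the form of a distance (it is essentially the distance from $0$ to the image line, i.e.\ the level-set geometry of $Q_z$), whereas the paper's computation is entirely self-contained, needs nothing about M\"obius transformations, and as a by-product locates the minimizing angle via $\cot\psi=2r\sin\alpha/|1-z|^2$. Both proofs quietly treat $\varphi=0$ as excluded (the paper by taking $\psi\in(0,\pi)$, you by assigning the value $+\infty$), consistent with the intended reading of the statement.
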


\begin{proof}
Let $z=re^{i\alpha},$ where $r\in [0,1)$ and $\alpha\in
[0,2\pi).$ Then, setting $ \psi:=\varphi/2\in (0,\pi),$ we obtain
\begin{align*}
4Q_z^2(\varphi) =&\frac{r^2+1-2r\cos\alpha
+4r\cos\alpha\sin^2\psi-4r\sin\alpha\sin\psi\cos\psi}{\sin^2\psi}\\
=&|1-z|^2(1+\cot^2\psi)+4r\cos\alpha-4r\sin\alpha \cot\psi\\
=&\left(|1-z|\cot\psi-\frac{2r\sin\alpha}{|1-z|}\right)^2
-\frac{4r^2\sin^2\alpha}{|1-z|^2}+|1-z|^2+4r\cos\alpha.
\end{align*}
Hence a simple calculation shows that
\begin{align*}
4m_z^2=&-\frac{4r^2\sin^2\alpha}{|1-z|^2}+|1-z|^2+4r\cos\alpha\\
=&\frac{(1-|z|^2)^2}{|1-z|^2},
\end{align*}
and  \eqref{Fmin} follows.
\end{proof}

The next simple lemma is instrumental in the proof of  Theorem
\ref{mainT}.

\begin{lemma}\label{sum}
 For all $\gamma\in [0,\pi)$, $\beta\in [0,\pi)$ such
that  $\gamma+\beta<\pi$,
\begin{equation}\label{FEH}
|z+\lambda|\ge \cos ((\gamma+\beta)/2)\, (|z|+|\lambda|),\quad
z\in \overline{\Sigma}_\gamma,\quad \lambda\in
\overline{\Sigma}_\beta.
\end{equation}
\end{lemma}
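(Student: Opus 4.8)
The plan is to reduce \eqref{FEH} to a one-variable trigonometric estimate by passing to polar coordinates. First I would dispose of the degenerate cases: if $z=0$ or $\lambda=0$ the inequality is immediate because $\cos((\gamma+\beta)/2)\le 1$. So assume $z,\lambda\neq 0$ and write $z=|z|e^{i\theta_1}$, $\lambda=|\lambda|e^{i\theta_2}$ with $|\theta_1|\le\gamma$ and $|\theta_2|\le\beta$. Then
\[
|z+\lambda|^2=|z|^2+|\lambda|^2+2|z||\lambda|\cos(\theta_1-\theta_2),
\]
and since $|\theta_1-\theta_2|\le\gamma+\beta<\pi$ and cosine is decreasing on $[0,\pi]$, one has $\cos(\theta_1-\theta_2)\ge\cos(\gamma+\beta)$, hence
\[
|z+\lambda|^2\ge|z|^2+|\lambda|^2+2|z||\lambda|\cos(\gamma+\beta).
\]

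The second step is to insert the half-angle identity $\cos(\gamma+\beta)=2\cos^2((\gamma+\beta)/2)-1$. Setting $c:=\cos((\gamma+\beta)/2)$, which lies in $[0,1]$ since $(\gamma+\beta)/2\in[0,\pi/2)$, the right-hand side above equals $(|z|-|\lambda|)^2+4c^2|z||\lambda|$. Comparing this with
\[
c^2(|z|+|\lambda|)^2=c^2(|z|-|\lambda|)^2+4c^2|z||\lambda|
\]
and using $c^2\le 1$ to drop $(|z|-|\lambda|)^2$ down to $c^2(|z|-|\lambda|)^2$ gives
\[
|z+\lambda|^2\ge(|z|-|\lambda|)^2+4c^2|z||\lambda|\ge c^2(|z|-|\lambda|)^2+4c^2|z||\lambda|=c^2(|z|+|\lambda|)^2,
\]
and taking square roots yields \eqref{FEH}.

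There is no genuine obstacle here; this is an elementary computation. The only points requiring a little care are that the polar-form argument presupposes $z,\lambda\neq0$ (which is why the degenerate cases must be handled separately), and that one must record $(\gamma+\beta)/2<\pi/2$ so that $c\ge 0$ and $c^2\le1$, the latter being exactly what licenses the final comparison step.
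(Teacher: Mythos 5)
Your proof is correct and is essentially the paper's argument: both reduce \eqref{FEH} to the elementary scalar inequality $a^{2}+b^{2}+2ab\cos\theta\ge \cos^{2}(\theta/2)\,(a+b)^{2}$ together with the monotonicity of cosine on $[0,\pi]$. The paper merely normalizes first, writing $|z+\lambda|=r|1+r^{-1}\rho e^{i(\beta_0-\gamma_0)}|$ and invoking $|1+se^{i\beta}|\ge\cos(\beta/2)(1+s)$ before passing to the worst-case angle, whereas you pass to the worst-case angle first and then verify the same inequality via the half-angle identity; the two routes are the same computation in a different order.
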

\begin{proof}
Note first that if $\beta\in (-\pi,\pi)$ and $s>0$ then
\begin{equation}\label{number}
|1+se^{i\beta}|^2=1+s^2+2s\cos\beta \ge\cos^2(\beta/2) (1+s)^2.
\end{equation}
Let now $ \gamma>0,\quad \beta>0,\quad \gamma+\beta<\pi, $ and
\[
z=r e^{i\gamma_0}\in \overline{\Sigma}_\gamma,\quad \lambda= \rho
e^{i\beta_0}\in \overline {\Sigma}_\beta, \quad |\gamma_0|\le
\gamma,\quad |\beta_0|\le \beta.
\]
Then, using (\ref{number}), we obtain
\[
|z+\lambda|=r|1+r^{-1}\rho e^{i(\beta_0-\gamma_0)}|\ge
\cos((\beta_0-\gamma_0)/2)\,(|z|+|\lambda|).
\]
From this, since
\[
|\beta_0-\gamma_0|\le \beta+\gamma \in [0,\pi), \quad \text{ and}
\quad \cos((\beta_0-\gamma_0)/2)\ge \cos((\beta+\gamma)/2),
\]
it follows that
\begin{equation}\label{number1}
|z+\lambda|\ge \cos((\beta+\gamma)/2)\,(|z|+|\lambda|),\quad z\in
\overline{\Sigma}_\gamma,\quad \lambda\in \overline{\Sigma}_\beta.
\end{equation}
\end{proof}

Now we turn to the proof of Lemma \ref{CorD}  which was essential
in our arguments leading to Theorems \ref{mainT2} and
\ref{mainT1}. The proof of this lemma is based on several auxiliary estimates.
\begin{lemma}\label{R}
For all $R>0$ and $\beta\in (-\pi/2,\pi/2)$ there exists
\[
b=b(\beta,R)\in (0,\min\{1,1/(2R)\}),
\]
such that
\begin{equation}\label{Est1}
\left|\frac{1-re^{i\beta}}{1+re^{i\beta}}\right|\le
\frac{1-b r}{1+b r},\qquad r\in (0,R),
\end{equation}
and
\begin{equation}\label{C0}
\frac{1}{|1+re^{i\beta}|^2}\le
\frac{1}{(1+b r)^2},\qquad r\in (0,R).
\end{equation}
Moreover, for each fixed $R>0$, $ b=b(\cdot,R)$ can be arranged to be decreasing function on $(0,\pi/2).$
\end{lemma}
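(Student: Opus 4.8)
The plan is to turn both \eqref{Est1} and \eqref{C0} into elementary real inequalities using the identities $|1\mp re^{i\beta}|^2=1\mp2r\cos\beta+r^2$, and then to verify that the explicit choice
\[
b=b(\beta,R):=\frac{\cos\beta}{1+R^2}
\]
does the job. Everything in sight depends on $\beta$ only through $\cos\beta$, so I may assume $\beta\in[0,\pi/2)$ (hence $\cos\beta\in(0,1]$), and with this $b$ the monotonicity in $\beta$ asserted at the end of the lemma is immediate, since $\cos$ is decreasing there. The first thing to record is that $b$ lies in the stated range: $\cos\beta\le1<1+R^2$ gives $b<1$, while the elementary bound $1+R^2\ge2R$ gives $b\le\frac1{1+R^2}\le\frac1{2R}$; both are strict except in the single case $\beta=0$, $R=1$, which is of no consequence and, if one insists on the open interval, is accommodated by an arbitrarily small rescaling of $b$ (every estimate below only improves and the decreasing dependence on $\beta$ is preserved).

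For \eqref{Est1} I would square both sides (both are nonnegative, so this is harmless) and write the square of the left-hand side as $\frac{1-u}{1+u}$ with $u=\frac{2r\cos\beta}{1+r^2}\ge0$, and the square of the right-hand side as $\frac{1-v}{1+v}$ with $v=\frac{2br}{1+b^2r^2}\ge0$. Since $x\mapsto\frac{1-x}{1+x}$ is strictly decreasing on $[0,\infty)$, \eqref{Est1} is equivalent to $u\ge v$, i.e., after cancelling $2r>0$, to
\[
\cos\beta-b\ \ge\ b\,r^2\,(1-b\cos\beta),\qquad r\in(0,R).
\]
Substituting $b=\frac{\cos\beta}{1+R^2}$ reduces this to $R^2\ge r^2\cdot\frac{1+R^2-\cos^2\beta}{1+R^2}$, which holds for every $r\in(0,R)$ because $r^2<R^2$ and $0<\frac{1+R^2-\cos^2\beta}{1+R^2}\le1$.

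For \eqref{C0} it suffices to show $1+2r\cos\beta+r^2\ge(1+br)^2$, i.e., $2(\cos\beta-b)+r(1-b^2)\ge0$; this is clear from $b=\frac{\cos\beta}{1+R^2}\le\cos\beta$ and $0<b<1$, and in fact holds for every $r>0$. There is no genuine obstacle here; the only point that requires a moment's care is to choose $b$ so that \eqref{Est1}, \eqref{C0}, and the range condition $b\in(0,\min\{1,1/(2R)\})$ hold simultaneously, and this is exactly what $b=\cos\beta/(1+R^2)$ together with $1+R^2\ge2R$ delivers.
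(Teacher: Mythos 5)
Your proof is correct and follows essentially the same route as the paper: the same choice $b=\cos\beta/(1+R^2)$, reduction of \eqref{Est1} by squaring to the algebraic inequality $b(1+r^2)\le\cos\beta(1+b^2r^2)$ (your form $\cos\beta-b\ge br^2(1-b\cos\beta)$ is the same thing rearranged), and \eqref{C0} from $b\le\cos\beta$, $b<1$. Your explicit treatment of the borderline case $\beta=0$, $R=1$, where $b$ hits the endpoint $1/(2R)$, is a small point the paper's proof silently skips, and your fix is fine.
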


\begin{proof}
The estimate (\ref{Est1}) is equivalent to
\[
(1+r^2-2r\cos\beta)(1+b r)^2\le
(1+r^2+2r\cos\beta)(1-b r)^2.
\]
Rearranging terms yields
\[
(1+r^2)[(1+b r)^2-(1-b r)^2]\le 2r\cos\beta[(1+b r)^2+(1-b r)^2],
\]
or
\[
\alpha (1+r^2)\le \cos\beta(1+b^2 r^2),\qquad r\in (0,R).
\]
The last inequality holds if
\begin{equation}\label{alpha}
b=b(\beta,R)=\cos\beta/(1+R^2)\in (0,\min\{1,1/(2R)\}).
\end{equation}
Moreover, if $b$ is defined by
(\ref{alpha}), then
\[
(1+b r)^2\le (1+r\cos\beta)^2\le
1+r^2+2r\cos\beta=|1+re^{i\beta}|^2,\qquad r>0,
\]
and (\ref{C0}) follows.

Given the definition \eqref{alpha}, the last claim is straightforward.
\end{proof}

Define for each $n \in \mathbb N$
\begin{equation}\label{qn}
h_n(\lambda):=\left(\frac{1-\lambda}{1+\lambda}\right)^n,\qquad q_n(\lambda):=1-h_n(\lambda), \qquad \lambda\not=-1.
\end{equation}

Note that $|h_n(\lambda)|\le 1$, $|q_n(\lambda)|\le 2$, $\lambda\in \overline{\C}_{+}, $ and
that $q_n$ maps $\overline{\C}_{+}$ into $\overline{\C}_{+}.$
Moreover, for every $n\in \N,$
\begin{eqnarray*}
h_n'(\lambda)&=&-2n\frac{h_{n-1}(\lambda)}{(1+\lambda)^2},\\
h_n''(\lambda)&=&\frac{4n}{(1+\lambda)^4}[(n-1)h_{n-2}(\lambda)+(1+\lambda)h_{n-1}(\lambda)],
\end{eqnarray*}
where we set $h_0(\lambda)\equiv 1$ and $h_{-1}(\lambda)\equiv 0$.
In particular,
the functions $h_n$ and $-h_n'$ are
positive and decreasing on $(0,1)$ for each $n \in \mathbb N.$

\begin{lemma}\label{chi}
For all $\beta\in (0,\pi/2)$ and $R>0$  there exists
$b=b(\beta,R) \in  (0,\min\{1,1/(2R)\})$ such that for every $n
\in \mathbb N,$
\[
|{\rm Im}\,h_n(re^{i\beta})|\le -\frac{\pi}{2}r h_n'(b r), \qquad r\in (0,R).
\]
\end{lemma}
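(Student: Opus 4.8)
The plan is to bound ${\rm Im}\,h_n(re^{i\beta})$ by integrating $h_n'$ along the circular arc from $r$ to $re^{i\beta}$, and then to dominate $|h_n'|$ on that arc by $-h_n'(br)$ by means of Lemma~\ref{R}.

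First I would fix $\beta\in(0,\pi/2)$, $R>0$ and take $b:=b(\beta,R)=\cos\beta/(1+R^2)$, which by Lemma~\ref{R} lies in $(0,\min\{1,1/(2R)\})$; in particular $br<1$ for $r\in(0,R)$, so $br$ stays in the interval $(0,1)$ on which $h_n$ and $-h_n'$ are positive and decreasing. Since $\beta<\pi/2$, the arc $\phi\mapsto re^{i\phi}$, $\phi\in[0,\beta]$, lies in the open right half-plane for every $r\in(0,R)$, so $h_n$ is holomorphic near it; since $h_n(r)$ is real, the fundamental theorem of calculus along the arc gives ${\rm Im}\,h_n(re^{i\beta})={\rm Im}\int_0^\beta h_n'(re^{i\phi})\,ire^{i\phi}\,d\phi$, whence
\[
|{\rm Im}\,h_n(re^{i\beta})|\le r\int_0^\beta|h_n'(re^{i\phi})|\,d\phi\le\frac{\pi}{2}\,r\max_{\phi\in[0,\beta]}|h_n'(re^{i\phi})| .
\]
It then remains to prove that $\max_{\phi\in[0,\beta]}|h_n'(re^{i\phi})|\le -h_n'(br)$ for $r\in(0,R)$.

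For that I would use the formula $h_n'(\lambda)=-2n\,h_{n-1}(\lambda)/(1+\lambda)^2$ recalled before the lemma, together with $h_{n-1}=h_1^{\,n-1}$, to write $|h_n'(re^{i\phi})|=2n\,|h_1(re^{i\phi})|^{n-1}/|1+re^{i\phi}|^2$. For each fixed $\phi$, Lemma~\ref{R} supplies $|h_1(re^{i\phi})|\le(1-b(\phi,R)r)/(1+b(\phi,R)r)$ and $|1+re^{i\phi}|^{-2}\le(1+b(\phi,R)r)^{-2}$ for $r\in(0,R)$. Since $b(\cdot,R)$ is a positive multiple of $\cos$, hence decreasing on $(0,\pi/2)$, one has $b(\phi,R)\ge b$ for $\phi\in[0,\beta]$; combining this with the monotonicity of the scalar maps $x\mapsto(1-xr)/(1+xr)$ and $x\mapsto(1+xr)^{-2}$ upgrades both bounds to the $\phi$-free estimates $|h_1(re^{i\phi})|\le(1-br)/(1+br)=h_1(br)$ and $|1+re^{i\phi}|^{-2}\le(1+br)^{-2}$. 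Substituting and using $h_1(br)^{n-1}=h_{n-1}(br)$ gives $|h_n'(re^{i\phi})|\le 2n\,h_{n-1}(br)/(1+br)^2=-h_n'(br)$, the last equality because $h_n'(br)<0$; combined with the display above this is exactly the assertion.

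The only genuinely delicate point is this uniformization over $\phi$: Lemma~\ref{R} produces a $\phi$-dependent constant $b(\phi,R)$ under the integral sign, and it is the monotonicity of $\phi\mapsto b(\phi,R)$ together with that of the two elementary functions above that lets one replace it by the single $b=b(\beta,R)$ (the endpoint $\phi=0$ being covered since $b(0,R)=1/(1+R^2)\ge b$); the rest is a routine arc-integral estimate and bookkeeping with the explicit derivative of $h_n$. As a by-product, this also identifies the constants $b=\cos\beta/(1+R^2)$ and $m=\pi/2$ appearing in Lemma~\ref{CorD}.
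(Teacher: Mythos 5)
Your argument is correct and follows essentially the same route as the paper: express ${\rm Im}\,h_n(re^{i\beta})$ as an arc integral of $h_n'$, apply Lemma~\ref{R} pointwise, and use the monotonicity of $b(\cdot,R)$ to replace the $\phi$-dependent constant by $b(\beta,R)$, yielding the bound $-\frac{\pi}{2}rh_n'(br)$. The only cosmetic differences are that the paper integrates symmetrically over $[-\beta,\beta]$ via ${\rm Im}\,h_n(re^{i\beta})=\bigl(h_n(re^{i\beta})-h_n(re^{-i\beta})\bigr)/(2i)$ and uniformizes using the monotonicity of $-h_n'$ on $(0,1)$ rather than of the two elementary factors, which is equivalent since $-h_n'(x)=2n\,h_{n-1}(x)/(1+x)^2$.
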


\begin{proof}
Let  $\beta\in (0,\pi/2)$ and $R>0$ be fixed. For every $r \in
(0,R),$ \begin{eqnarray*}
{\rm Im}\,h_n(re^{i\beta})=
\frac{h_n(re^{i\beta})-h_n(re^{-i\beta})}{2i}
&=&\frac{1}{2i}\int_{-\beta}^\beta \frac{d h_n(re^{i\gamma})}{d\gamma}\,d\gamma\\
&=&-n r\int_{-\beta}^\beta \frac{h_{n-1}(re^{i\gamma})}{(1+re^{i\gamma})^2}\,e^{i\gamma}\,d\gamma.
\end{eqnarray*}
Let $b_\gamma=b(\gamma, R)$ and $\gamma \in (0,\beta]$ be given by Lemma \ref{R}
(see (\ref{alpha})).
Then, using Lemma \ref{R} and the monotonicity
of  $-h_n'$ on $(0,1),$
we obtain for each $r\in (0,R):$
\begin{eqnarray*}
|{\rm Im}\,h_n(re^{i\beta})|&\le&
2\beta n r\sup_{\gamma\in (0,\beta)}\frac{|h_{n-1}(re^{i\gamma})|}{|1+re^{i\gamma}|^2}\\
&\le& 2 \beta n r\sup_{\gamma\in (0,\beta)}\,
\frac{h_{n-1}(b_\gamma r)}{(1+b_\gamma r)^2}\\
&\le& 2 \beta n r\,
\frac{h_{n-1}(b_\beta r)}{(1+b_\beta r)^2}\\
&=&-\beta r h_n'(b_\beta r)\\
&\le& -\frac{\pi}{2}r h_n'(b_\beta r).
\end{eqnarray*}
\end{proof}

Now Lemma \ref{CorD} follows directly from  Lemma \ref{chi}.
Indeed, if
\[
c_n\ge 0, \,\, n\ge 0,  \qquad \sum_{n=0}^\infty c_n=1,
\]
and
\[
\mathbf{h}(\lambda):=1-\sum_{n=0}^\infty c_n h_n(\lambda)=\sum_{n=0}^\infty c_n q_n(\lambda),\qquad \lambda\in \C_+,
\]
then, by Lemma \ref{chi}, for all $\beta\in (0,\pi/2)$ and $R>0,$
there exists $$b=\cos \beta/(1+R^2) \in \left(0,\min\{1,1/(2R)\}\right)$$ such
that
\[
|{\rm Im}\, \mathbf{h} (re^{i\beta})|\le \frac{\pi}{2}r \mathbf{h}'(b r), \qquad r \in (0,R).
\]
In other words, $\mathbf{h} \in \mathcal{D}_{\pi/2}(0,1)$ with
$m=\pi/2$ and $b$ as above.

Finally, if $\lambda \in \mathbb C_+,$ then
$$
{\rm Re}\, \mathbf{h}(\lambda)\ge 1-\sum_{k=0}^{\infty} c_k \left|\frac{1-\lambda}{1+\lambda} \right|^k \ge 0,
$$
and, since  $\mathbf{h}((0,\infty)) \subset [0,\infty),$ we conclude that $\mathbf{h} \in \mathcal{NP_+}.$
\section{Appendix B}

In this appendix, we prove several estimates which allowed us to
obtain additional, geometric  information on Hausdorff functions
of Ritt operators in Section \ref{improvesec}. We start with the
proposition needed in Example \ref{zeta}.

\begin{prop}\label{Lfun}
For every $\alpha\in (0,1),$
\begin{equation}\label{LL}
L_{1+\alpha}(1)-L_{1+\alpha}(\lambda)\in
\overline{\Sigma}_{\alpha\pi/2},\qquad {\rm Re}\,\lambda\le 1.
\end{equation}
\end{prop}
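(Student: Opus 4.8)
The plan is to pass to the half‑plane picture, in which $L_{1+\alpha}$ becomes a complete Bernstein function, and then to compare with the fractional power $z^{\alpha}$ by a monotone–rearrangement (Chebyshev) argument. Writing $z=1-\lambda$, the assertion $L_{1+\alpha}(1)-L_{1+\alpha}(\lambda)\in\overline{\Sigma}_{\alpha\pi/2}$ for ${\rm Re}\,\lambda\le 1$ is exactly $\psi(z):=1-L_{1+\alpha}(1-z)\in\overline{\Sigma}_{\alpha\pi/2}$ for $z\in\overline{\C_+}$ (at $z=0$ this is trivial, since $L_{1+\alpha}(1)=1$). By Example~\ref{zeta} the function $L_{1+\alpha}$ is a regular Hausdorff function with representing measure $\nu(d\sigma)=\frac{(\log(1/\sigma))^{\alpha}}{\zeta(1+\alpha)\Gamma(1+\alpha)}\,d\sigma$ on $(0,1)$; hence by Proposition~\ref{prop1} the function $\psi$ extends to an element of $\mathcal{CBF}$, analytic on $\C\setminus(-\infty,0]$ (so that $\overline{\C_+}\setminus\{0\}$ is covered), and computing the push–forward measure exactly as in that proposition one gets the Stieltjes representation
\[
\psi(z)=\int_0^\infty\frac{z}{z+t}\,\mu_\psi(dt),\qquad
\mu_\psi(dt)=\frac{(\log(1+t))^{\alpha}}{\zeta(1+\alpha)\Gamma(1+\alpha)\,t}\,dt .
\]
Since $\psi(\bar z)=\overline{\psi(z)}$, it suffices to treat $z=re^{i\theta}$ with $r>0$ and $\theta\in[0,\pi/2]$, the case $\theta=0$ being clear because $\psi>0$ on $(0,\infty)$.

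Next I would bring in the companion representation of fractional powers from Example~\ref{discex}, a), namely $z^{\alpha}=\int_0^\infty\frac{z}{z+t}\,\mu_\alpha(dt)$ with $\mu_\alpha(dt)=\frac{\sin(\pi\alpha)}{\pi}t^{\alpha-1}\,dt$, valid on $\C\setminus(-\infty,0]$, and compare the two. Fix $z=re^{i\theta}$ as above; an elementary computation gives ${\rm Re}\,\frac{z}{z+t}=\frac{r(r+t\cos\theta)}{|z+t|^{2}}>0$ and ${\rm Im}\,\frac{z}{z+t}=\frac{rt\sin\theta}{|z+t|^{2}}$. Put
\[
\phi_z(t):=\frac{{\rm Im}\,\frac{z}{z+t}}{{\rm Re}\,\frac{z}{z+t}}=\frac{t\sin\theta}{r+t\cos\theta},\qquad
\rho(t):=\frac{\mu_\psi(dt)}{\mu_\alpha(dt)}=\frac{\pi}{\zeta(1+\alpha)\Gamma(1+\alpha)\sin(\pi\alpha)}\Bigl(\tfrac{\log(1+t)}{t}\Bigr)^{\!\alpha},
\]
and let $\sigma(dt):={\rm Re}\,\frac{z}{z+t}\,\mu_\alpha(dt)$, a positive measure of finite total mass $\sigma((0,\infty))={\rm Re}\,z^{\alpha}=r^{\alpha}\cos(\alpha\theta)>0$ (here $\alpha\theta<\pi/2$). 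With this notation,
\[
{\rm Re}\,z^{\alpha}=\!\int_0^\infty\! d\sigma,\quad {\rm Im}\,z^{\alpha}=\!\int_0^\infty\!\phi_z\,d\sigma,\quad {\rm Re}\,\psi(z)=\!\int_0^\infty\!\rho\,d\sigma,\quad {\rm Im}\,\psi(z)=\!\int_0^\infty\!\phi_z\,\rho\,d\sigma .
\]

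Finally I would invoke the two monotonicities: $\phi_z$ is non‑decreasing on $(0,\infty)$ since $\phi_z'(t)=r\sin\theta/(r+t\cos\theta)^{2}\ge 0$, and $\rho$ is non‑increasing because $t\mapsto\log(1+t)/t$ is decreasing on $(0,\infty)$ while $x\mapsto x^{\alpha}$ is increasing. Chebyshev's integral inequality for a pair of oppositely monotone functions integrated against the finite positive measure $\sigma$ then gives
\[
\Bigl(\int_0^\infty\!\phi_z\,\rho\,d\sigma\Bigr)\Bigl(\int_0^\infty\! d\sigma\Bigr)\le\Bigl(\int_0^\infty\!\phi_z\,d\sigma\Bigr)\Bigl(\int_0^\infty\!\rho\,d\sigma\Bigr),
\]
i.e. ${\rm Im}\,\psi(z)/{\rm Re}\,\psi(z)\le {\rm Im}\,z^{\alpha}/{\rm Re}\,z^{\alpha}=\tan(\alpha\theta)$. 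As ${\rm Re}\,\psi(z)>0$ and $0\le\alpha\theta\le\alpha\pi/2<\pi/2$, this yields $\arg\psi(z)\le\alpha\theta\le\alpha\pi/2$, that is $\psi(z)\in\overline{\Sigma}_{\alpha\pi/2}$, which is \eqref{LL}. The only genuinely non‑routine point is the choice of the comparison function $z^{\alpha}$ together with the observation that the ratio of the two Stieltjes densities is the monotone function $\rho(t)=c_\alpha(\log(1+t)/t)^{\alpha}$; granted this, Chebyshev's inequality does the rest, while the remaining verifications (finiteness of $\sigma$, the endpoints $\theta\in\{0,\pi/2\}$, and the identification of the analytic continuation of $L_{1+\alpha}$ over $\{{\rm Re}\,\lambda\le1\}$) are straightforward.
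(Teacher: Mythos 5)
Your argument is correct, and while it starts from the same place as the paper's proof, the decisive step is genuinely different. Like the paper, you first use the Hausdorff structure of $L_{1+\alpha}$ (Example~\ref{zeta}) and Proposition~\ref{prop1} to pass to $\psi(z)=1-L_{1+\alpha}(1-z)\in\mathcal{CBF}$ with Stieltjes density proportional to $(\log(1+t))^{\alpha}/t$; this is exactly the paper's formula \eqref{MM}. At that point the paper substitutes $t=\lambda\tau$ and rotates the contour back to $(0,\infty)$ by Cauchy's theorem, obtaining $\psi(\lambda)=\int_0^\infty \log^{\alpha}(1+\lambda\tau)\,\frac{d\tau}{(1+\tau)\tau}$, where the kernel is positive and $\log^{\alpha}(1+\lambda\tau)$ lies pointwise in $\Sigma_{\alpha\pi/2}$ for $\lambda\in\C_+$, so the closed convex cone $\overline{\Sigma}_{\alpha\pi/2}$ absorbs the integral. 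You instead keep the original variable and compare with the Stieltjes representation of $z^{\alpha}$ from Example~\ref{discex}, a), observing that the density ratio $\rho(t)=c_\alpha(\log(1+t)/t)^{\alpha}$ is nonincreasing while the slope function $\phi_z(t)=t\sin\theta/(r+t\cos\theta)$ is nondecreasing, and then Chebyshev's correlation inequality for oppositely monotone functions (all the relevant integrals converge absolutely, so the usual double-integral proof applies) gives $\tan\arg\psi(re^{i\theta})\le\tan(\alpha\theta)$; since ${\rm Re}\,\psi>0$ and ${\rm Im}\,\psi\ge0$ on the closed upper right quadrant, this yields the claim, with the boundary ray $\theta=\pi/2$ and the point $z=0$ handled as you indicate and the lower half-plane by conjugation symmetry. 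The trade-off: the paper's contour rotation is shorter and purely pointwise, whereas your comparison argument buys the sharper angular statement $|\arg\psi(re^{i\theta})|\le\alpha|\theta|$ (a sector-to-sector mapping bound, not just membership in $\overline{\Sigma}_{\alpha\pi/2}$) and avoids any contour deformation; both are complete proofs of \eqref{LL}.
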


\begin{proof}
Recall that by \cite[p.27, 1.11(3)]{Bateman2}, for every
$\alpha\in (0,1),$
\[
L_{1+\alpha}(\lambda)=\frac{\lambda}{\Gamma(1+\alpha)}\int_0^1
\frac{\log^\alpha(1/s)\,ds}{1-s\lambda}, \quad \lambda\in \C_{+}\setminus
(1,\infty).
\]
and by Proposition \ref{prop1} we have
\begin{equation}\label{MM}
\psi(\lambda):=1-L_{1+\alpha}(1-\lambda)=\frac{1}{\Gamma(1+\alpha)}\int_0^\infty
\frac{\lambda\log^\alpha(1+t)\,dt}{(\lambda+t)t},\quad \lambda\in
\C\setminus(-\infty,0].
\end{equation}
If now $\lambda=|\lambda|e^{i\delta}$, $\delta\in (-\pi,\pi)$, then setting
$t=\lambda\tau$ and using Cauchy's theorem, we infer from (\ref{MM})
that
\begin{equation}\label{MM1}
\psi(\lambda)=\int_0^{e^{-i\delta}\infty}
\frac{\log^\alpha(1+\lambda\tau)\,d\tau}{(1+\tau)\tau} =\int_0^\infty
\frac{\log^\alpha(1+\lambda \tau)\,d\tau}{(1+\tau)\tau}.
\end{equation}
If, moreover, $\lambda\in \C_{+}$, then for any $\tau>0$ we have
\[
\log^\alpha(1+\lambda\tau)\in \Sigma_{\alpha\pi/2},\quad \alpha\in
(0,1).
\]
Thus taking into account (\ref{MM1}) we get $\psi(\C_{+})\subset
\overline{\Sigma}_{\alpha\pi/2},$ and thus (\ref{LL}).
\end{proof}

Now we prove an auxiliary result which is crucial in Example
\ref{eps}.
\begin{lemma}\label{QBC}
If
\[
h(\lambda)=\frac{\lambda-1}{\log \lambda}, \qquad \lambda \in \C_+,
\]
then
\[
h(\D_1)\subset \overline{\Sigma}_{\pi/3},\qquad \D_1=\{\lambda\in
\C: |\lambda-1|<1\}.
\]
\end{lemma}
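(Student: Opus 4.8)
The plan is to pass to $\partial\D_1$ via the maximum principle, make the boundary values of $h$ completely explicit, and reduce the whole statement to one elementary one–variable inequality. First, the setup: $h(\lambda)=\frac{\lambda-1}{\log\lambda}$ is holomorphic on $\D_1$ (the point $\lambda=1$ is a removable singularity with $h(1)=1$, $\log$ has no other zero, and $\D_1\subset\C_+\subset\C\setminus(-\infty,0]$), it is nowhere zero on $\D_1$, and it extends continuously to $\overline{\D_1}$ with $h(0)=0$. Since $\frac{\lambda-1}{\log\lambda}\in\mathcal{CBF}$ (Example~\ref{discex}~b)), in particular $h\in\mathcal{NP_+}$, so $h(\D_1)\subset h(\C_+)\subset\overline{\C_+}$; being the image of a non‑constant holomorphic map, $h(\D_1)$ is open, hence $h(\D_1)\subset\C_+$ and $\arg h\in(-\pi/2,\pi/2)$ on $\D_1$. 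Thus $\arg h$ is a bounded harmonic function on $\D_1$, continuous on $\overline{\D_1}\setminus\{0\}$, and since the single point $0$ is negligible for the maximum principle, $\sup_{\D_1}|\arg h|=\sup_{\theta\in(-\pi,\pi)}|\arg h(1+e^{i\theta})|$. By the symmetry $h(\bar\lambda)=\overline{h(\lambda)}$ it suffices to take $\theta\in[0,\pi)$, and since every complete Bernstein function maps the upper half–plane into $\overline{\C^+}$ (immediate from \eqref{Cbf}, as $\im\psi(\lambda)=\im\lambda\,(b+\int\frac{s\,\mu(ds)}{|\lambda+s|^{2}})\ge0$), one gets $\arg h(1+e^{i\theta})\in[0,\pi/2]$ for $\theta\in[0,\pi)$. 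So it remains to prove $\arg h(1+e^{i\theta})\le\pi/3$.

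For $\theta\in[0,\pi)$ we have $1+e^{i\theta}=2\cos(\theta/2)\,e^{i\theta/2}$ with $2\cos(\theta/2)>0$ and $\theta/2\in[0,\pi/2)$, so $\log(1+e^{i\theta})=\log(2\cos(\theta/2))+i\theta/2$, and with $\phi:=\theta/2$,
\[
\arg h(1+e^{i\theta})=2\phi-\arg\bigl(\log(2\cos\phi)+i\phi\bigr).
\]
Thus I must show $\arg\bigl(\log(2\cos\phi)+i\phi\bigr)\ge 2\phi-\frac{\pi}{3}$ for $\phi\in[0,\frac{\pi}{2})$. For $\phi\le\frac{\pi}{6}$ this is clear, the left side being $\ge 0$. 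For $\phi\in(\frac{\pi}{6},\frac{\pi}{2})$ the point $\log(2\cos\phi)+i\phi$ has positive imaginary part and $2\phi-\frac{\pi}{3}\in(0,\pi)$, so, using that for $v>0$ and $\alpha\in(0,\pi)$ one has $\arg(u+iv)\ge\alpha$ exactly when $v\cos\alpha\ge u\sin\alpha$, the inequality is equivalent to
\[
F(\phi):=\phi\cos\!\Bigl(2\phi-\tfrac{\pi}{3}\Bigr)-\log(2\cos\phi)\,\sin\!\Bigl(2\phi-\tfrac{\pi}{3}\Bigr)\ \ge\ 0 .
\]

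The key point is that $\phi\mapsto\log(2\cos\phi)$ is concave on $(0,\frac{\pi}{2})$ (second derivative $-\sec^{2}\phi<0$), hence lies below its tangent at $\phi=\frac{\pi}{3}$: $\log(2\cos\phi)\le\sqrt3\,(\frac{\pi}{3}-\phi)$. Since $\sin(2\phi-\frac{\pi}{3})>0$ on $(\frac{\pi}{6},\frac{\pi}{2})$, inserting this bound into $F$ and substituting $r:=2\phi-\frac{2\pi}{3}\in(-\frac{\pi}{3},\frac{\pi}{3})$, the addition formulas (in particular $\cos(r+\frac{\pi}{3})+\sqrt3\sin(r+\frac{\pi}{3})=2\cos r$) collapse everything to
\[
F(\phi)\ \ge\ H(r):=r\cos r+\tfrac{\pi}{3}\cos\!\Bigl(r+\tfrac{\pi}{3}\Bigr),\qquad r\in\Bigl(-\tfrac{\pi}{3},\tfrac{\pi}{3}\Bigr),
\]
so it suffices to show $H(r)\ge0$ there. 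I would split at $r=-\frac{\pi}{6}$. On $(-\frac{\pi}{3},-\frac{\pi}{6}]$: since $r<0$ and $\cos r\le\frac{\sqrt3}{2}$ we get $r\cos r\ge\frac{\sqrt3}{2}r>-\frac{\pi\sqrt3}{6}$, while $r+\frac{\pi}{3}\in(0,\frac{\pi}{6}]$ gives $\cos(r+\frac{\pi}{3})\ge\frac{\sqrt3}{2}$, hence $H(r)>-\frac{\pi\sqrt3}{6}+\frac{\pi\sqrt3}{6}=0$. On $[-\frac{\pi}{6},\frac{\pi}{3}]$: from $H'(r)=(1-\frac{\pi\sqrt3}{6})\cos r-(r+\frac{\pi}{6})\sin r$ (note $1-\frac{\pi\sqrt3}{6}>0$) one sees $H'>0$ on $[-\frac{\pi}{6},0]$ and that $H'$ is strictly decreasing on $[0,\frac{\pi}{3}]$ from $H'(0)>0$ to $H'(\frac{\pi}{3})<0$; so $H$ is increasing then decreasing on $[-\frac{\pi}{6},\frac{\pi}{3}]$ and attains its minimum at an endpoint, and since $H(-\frac{\pi}{6})=\frac{\pi\sqrt3}{12}>0$ and $H(\frac{\pi}{3})=0$ we conclude $H\ge0$. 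This gives $F\ge0$ and finishes the proof.

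The step I expect to be the real work is the last one: $F$ (equivalently $H$) is not monotone and is only barely nonnegative — it vanishes at $\phi=\frac{\pi}{2}$, corresponding to the limit $\lambda\to0$ — so the sign analysis must be carried out with some care about where $H$ is monotone, whereas the maximum–principle reduction, the boundary parametrization and the concavity bound are routine.
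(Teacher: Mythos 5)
Your proof is correct, but it follows a genuinely different route from the paper's. The paper splits the disc as $\D_1\subset \overline{\Sigma}_{\pi/3}\cup \D_{+}$ with $\D_{+}:=\C_{+}\cap\D$, disposes of the sector part by the angle-preservation property \eqref{PrB} of $\mathcal{NP_+}$ (so $h(\overline{\Sigma}_{\pi/3}\setminus\{0\})\subset\overline{\Sigma}_{\pi/3}$), and then handles the half-disc by computing $h$ on the two boundary pieces $i(0,1)$ and $\{e^{i\beta}:0<\beta<\pi/2\}$ (ratio of imaginary to real part at most $\tan(\pi/4)$, monotonicity of $|h|$ on each piece) to conclude $h(\D_{+})\subset\overline{\Sigma}_{\pi/4}$; the union of the two images then lands in $\overline{\Sigma}_{\pi/3}$. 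You instead work on all of $\D_1$ at once: you use $h\in\mathcal{CBF}\subset\mathcal{NP_+}$ only for the soft facts $\re h>0$ and $\im h\ge 0$ on $\C^{+}$, reduce by the (bounded-harmonic, one exceptional boundary point) maximum principle for $\arg h$ to the circle $1+e^{i\theta}$, and then prove the explicit inequality $\arg\bigl(\log(2\cos\phi)+i\phi\bigr)\ge 2\phi-\pi/3$ via the tangent-line bound $\log(2\cos\phi)\le\sqrt3\,(\pi/3-\phi)$ and an elementary sign analysis of $H(r)=r\cos r+\tfrac{\pi}{3}\cos(r+\pi/3)$; I checked the parametrization, the criterion $\arg(u+iv)\ge\alpha\Leftrightarrow v\cos\alpha\ge u\sin\alpha$ for $v>0$, $\alpha\in(0,\pi)$, the identity $\cos(r+\pi/3)+\sqrt3\sin(r+\pi/3)=2\cos r$, and the monotonicity/endpoint values of $H$, and all are sound. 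The trade-off: your argument is more computational but self-contained and avoids the slightly delicate boundary-correspondence step in the paper's half-disc argument, while the paper's approach exploits the structural sector-mapping property of $\mathcal{NP_+}$ and yields the extra information $h(\D_{+})\subset\overline{\Sigma}_{\pi/4}$. One cosmetic slip in your closing remark: it is the lower bound $H$, not $F$ itself, that vanishes as $\phi\to\pi/2$ (in fact $F(\phi)\to+\infty$ there, since $-\log(2\cos\phi)\to+\infty$ and $\sin(2\phi-\pi/3)\to\sqrt3/2$); this does not affect the proof.
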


\begin{proof}
Recall that $h \in \mathcal{CBF},$
hence
\begin{equation}\label{PropB}
{\rm Im}\,h(\lambda)>0,\qquad h(\lambda)=\overline{h(\overline {\lambda})},\quad {\rm Im}\,\lambda>0,
\end{equation}

Let us first prove the following claim:
\begin{equation}\label{incl}
h(\lambda)\in \overline{\Sigma}_{\pi/4},\qquad \lambda\in \overline{\D}_{+},\quad
\D_{+}:=\C_{+}\cap \D.
\end{equation}

For every $s>0$ we have
\[
h(is)=\frac{is-1}{\log s+i\pi/2}
=\frac{(\pi s/2-\log s)+i(s\log s+\pi/2)}{(\log s)^2+\pi^2/4}.
\]
If $s \in (0,1),$ then
\[
\frac{{\rm Im}\,h(is)}{{\rm Re}\,h(is)}=\frac{s\log
s+\pi/2}{\pi s/2-\log s}\le 1 \quad \text{and} \quad h(is)\in \overline{\Sigma}_{\pi/4}.
\]
Moreover, for $s\in (0,1),$
\[
\frac{d}{ds}|h(is)|^2=2\frac{s\pi^2/4-s^{-1}\log s}{((\log s)^2
+\pi^2/4)^2}>0.
\]
So, $|h(i\cdot )|$ is a strictly increasing function on $(0,1),$
$|h(0)|=0$, and $|h(i)|=\frac{2\sqrt{2}}{\pi}.$
Next, for every $\beta\in (0,\pi/2),$
\[
\frac{{\rm Im}\,h(e^{i\beta})}{{\rm Re}\,h(e^{i\beta})}=
\frac{1-\cos\beta}{\sin\beta}=\tan (\beta/2)\le 1 \quad \text{and} \quad h(e^{i\beta})\in  \overline{\Sigma}_{\pi/4}.
\]
Moreover, if $\beta \in (0,\pi/2),$
then
\[
\frac{d}{d\beta}|h(e^{i\beta})|
=\frac{\beta\cos(\beta/2)-2\sin(\beta/2)}{\beta^2}<0.
\]
Hence, $|h(e^{i\cdot})|$ is a strictly decreasing function on $(0,\pi/2),$
$|h(0)|=1,$  and $|h(i)|=\frac{2\sqrt{2}}{\pi}<1.$

Now from  (\ref{PropB}) it follows that $h$ maps $\partial
\D_{+}$ into $\partial \overline{\Sigma}_{\pi/4}$ injectively, and
the claim is proved.

Finally, since $h \in \mathcal{CBF},$ we have
\begin{equation}\label{ar}
h(\overline{\Sigma}_{\pi/3} \setminus \{0\})\subset \overline{\Sigma}_{\pi/3},
\end{equation}
by \eqref{PrB}.
Taking into account $ \D_1\subset \overline{\Sigma}_{\pi/3} \cup
\D_{+}, $ and, using (\ref{incl}) and (\ref{ar}),  we obtain
\[
h(\D_1)\subset \overline{\Sigma}_{\pi/3} \cup \overline{\Sigma}_{\pi/4}=\overline{\Sigma}_{\pi/3} .
\]
\end{proof}

\section{Acknowledgements}
We are grateful to D. Seifert for a careful reading of the manuscript. We would also like to thank the referee for his/her
helpful comments and remarks.

\end{document}